\documentclass[12pt]{amsart}

\usepackage[T1]{fontenc}
\usepackage[utf8]{inputenc}
\usepackage{amsfonts}
\usepackage{amssymb}
\usepackage{amsmath,amsthm}
\usepackage{mathrsfs}
\usepackage{enumitem}
\usepackage{hyperref}

\makeatletter
\@namedef{subjclassname@2020}{%
  \textup{2020} Mathematics Subject Classification}
\makeatother

\hypersetup{
  colorlinks=true,
  linkcolor=blue,
  citecolor=blue,
  urlcolor=blue
}

\numberwithin{equation}{section}

\newtheorem{theorem}{Theorem}[section]
\newtheorem{lemma}[theorem]{Lemma}
\newtheorem{proposition}[theorem]{Proposition}
\newtheorem{corollary}[theorem]{Corollary}

\theoremstyle{definition}

\newtheorem{example}[theorem]{Example}

\theoremstyle{remark}
\newtheorem{remark}[theorem]{Remark}

\DeclareMathOperator{\Gal}{Gal}
\DeclareMathOperator{\Cl}{Cl}
\DeclareMathOperator{\ord}{ord}
\DeclareMathOperator{\rank}{rank}
\DeclareMathOperator{\length}{length}
\DeclareMathOperator{\charideal}{char}
\DeclareMathOperator{\Ind}{Ind}

\newcommand{\Z}{\mathbb Z}
\newcommand{\Q}{\mathbb Q}
\newcommand{\OO}{\mathcal O}
\newcommand{\mm}{\mathfrak m}

\begin{document}

\title[Ray class groups in the cyclotomic tower]
{Ray class groups in the cyclotomic tower}

\author{Su Hu}
\address{Department of Mathematics,
South China University of Technology,
Guangzhou, Guangdong 510640, China}
\email{mahusu@scut.edu.cn}

\author{Min-Soo Kim}
\address{Department of Mathematics Education,
Kyungnam University,
Changwon, Gyeongnam 51767, Republic of Korea}
\email{mskim@kyungnam.ac.kr}

\subjclass[2020]{11R18, 11R23, 11R29, 11F33}
\keywords{ray class groups, cyclotomic fields,
Iwasawa modules, Genocchi numbers, Eisenstein congruences}
\date{\today}

\begin{abstract}
Let $p$ be an odd prime and let $\Sigma$ be a nonempty
finite set of rational primes different from $p$.
In the cyclotomic tower $\Q(\mu_{p^{n+1}})$, we determine
the kernels of the natural maps from the minus
$p$-primary ray class groups, with modulus the product
of the primes above $\Sigma$, to the corresponding
ordinary minus $p$-primary class groups.
We give their character components and elementary divisors
at every finite layer, including the layers before
splitting stabilizes.
In the norm inverse limit, we obtain an explicit integral
cyclic decomposition of the local module modulo the
single diagonal global Tate module.
Its coinvariants recover the finite local kernels.
Together with ordinary class-group control, this gives
exact control for the minus ray class groups.

We identify the local Frobenius factors with Bernoulli
regularization factors.
For nonexceptional characters, the classical main
conjecture then yields regularized measure generators
of the ray class characteristic ideals.
The exceptional Teichm\"uller component is treated
without the main conjecture: the single Tate correction
corresponds to the single pole of the $p$-adic zeta function.
At the bottom layer, the local calculation and
Herbrand--Ribet yield characterwise nonvanishing criteria
involving regularized Bernoulli values.
For a single auxiliary prime, the exceptional component
detects the Wieferich condition.
We also deduce Eisenstein congruence consequences
from existing modular-form theorems.
\end{abstract}

\maketitle

\section{Introduction}

Let $p$ be an odd prime and put
\[
K_n=\Q(\mu_{p^{n+1}}),\qquad
K_\infty=\bigcup_{n\geq0}K_n.
\]
Here $\mu_{p^r}$ denotes the group of $p^r$-th roots
of unity.
We fix embeddings of $\overline{\Q}$ into $\mathbb C$
and into $\overline{\Q}_p$.
Algebraic special values are interpreted $p$-adically
through the latter embedding.
We use the valuation $v_p$ on $\overline{\Q}_p$
normalized by $v_p(p)=1$.

We study the change in the negative $p$-primary class
group of $K_n$ when ray congruences are imposed at a fixed
nonempty finite set $\Sigma$ of rational primes different
from $p$.
After $p$-adic completion, the minus part of the global
unit group is $\mu_{p^{n+1}}$.
The local kernel is therefore the minus part of the product
of the residue-field $p$-groups, modulo one diagonal copy
of $\mu_{p^{n+1}}$.
Our aim is to determine this quotient explicitly, both at
each finite layer and in the norm inverse limit.

Hu and Kim \cite{HK16} established an eventual
Iwasawa-type growth formula for the refinement at $2$.
The ray class interpretation and the associated
exact-sequence approach were noted in
\cite[Remark 4.5]{HK16}.
Finat \cite{Finat19} subsequently made the additional
linear growth explicit by evaluating a product of local
character factors.
His calculation in \cite[Section 3]{Finat19} already
treats an arbitrary auxiliary prime.
We determine the underlying local kernels, their character
components, and their elementary divisors at every layer.

The general Iwasawa-theoretic framework for these local
corrections is provided by Greither and Popescu
\cite{GP15} and Kurihara \cite{Kur21}.
Greither and Popescu describe local Tate modules through
Frobenius factors and incorporate the quotient by a
single global copy of $\Z_p(1)$ into their theory of
$p$-adic realizations of $1$-motives.
Kurihara studies Fitting ideals of duals of minus
$T$-ray class groups, using equivariant conjectures
at finite level and a $\mu=0$ hypothesis in the
cyclotomic tower.
For related finite-level results in the Brumer--Stark
setting, see Dasgupta and Kakde \cite{DK23}.

On the analytic side, Knospe and Washington
\cite{KW21} give explicit descriptions of regularized
Bernoulli measures and use them to obtain
Dirichlet-series expansions of $p$-adic $L$-functions.
Their residue-class formula
\cite[Theorem 3.1]{KW21}, together with the classical
integral representation recalled in
\cite[Equation (2)]{KW21}, provides the explicit
analytic input for our measure constructions.
The underlying Bernoulli regularization is classical.
The role of \cite{KW21} here is to make its local
factors and normalization explicit, so that they can
be compared directly with the ray class modules.

Our main contribution is an explicit integral description
of the local quotient in this cyclotomic setting.
For several auxiliary primes, its diagonal Tate submodule
can be absorbed by a single cyclic block, chosen according
to the splitting parameters.
The coinvariants of this quotient recover the finite local
kernels at every layer.
Ordinary class-group control then gives exact control for
the negative ray class groups.
The characteristic-ideal formulas follow from these
calculations and the classical main conjecture.
Using the explicit measure formulas of
Knospe and Washington, we realize the resulting
generators by a symmetric regularization over $\Sigma$.
The exceptional character is treated directly, without
the main conjecture.
None of these analytic inputs is needed for the
finite local structure or the integral Tate quotient.

\subsection*{Notation and finite local corrections}

Let $j\in\Gal(K_\infty/\Q)$ denote complex conjugation
with respect to the fixed embedding into $\mathbb C$.
We use the same symbol for its restriction to each $K_n$.
Thus $j(\zeta)=\zeta^{-1}$ for every root of unity
$\zeta\in K_\infty$.
For a field $F$ and a subgroup $H$ of its automorphism
group, write
\[
F^H=\{x\in F:\sigma(x)=x\text{ for every }\sigma\in H\}
\]
for the fixed field of $H$.
Put
\[
K_n^+=K_n^{\langle j\rangle},
\qquad
K_\infty^+=K_\infty^{\langle j\rangle},
\]
where $\langle j\rangle=\{1,j\}$.
These are the maximal real subfields of $K_n$ and
$K_\infty$, respectively.
More explicitly, if $\zeta_{p^{n+1}}$ is a primitive
$p^{n+1}$-th root of unity, then
\[
K_n^+
=\Q(\zeta_{p^{n+1}}+\zeta_{p^{n+1}}^{-1}),
\qquad
K_\infty^+=\bigcup_{n\geq0}K_n^+.
\]

For a number field $K$ and a finite prime $\mathfrak q$
of $K$, write
\[
k_{\mathfrak q}=\OO_K/\mathfrak q.
\]
For a finite abelian group $M$, write $M[p^\infty]$
for its $p$-primary subgroup.
For a finite abelian $p$-group,
\[
\length_{\Z_p}M=\log_p\#M.
\]

For the fixed set $\Sigma$, define
\[
\mm_{n,\Sigma}
=\prod_{\ell\in\Sigma}
\prod_{\mathfrak l\mid\ell\text{ in }K_n}\mathfrak l,
\qquad
\mm_{n,\Sigma}^+
=\prod_{\ell\in\Sigma}
\prod_{\mathfrak l^+\mid\ell\text{ in }K_n^+}
\mathfrak l^+.
\]
All moduli in this paper are finite and squarefree.
In particular, no positivity condition is imposed at
real places.
Write
\[
A_n=\Cl(K_n)[p^\infty],\qquad
A_{n,\Sigma}
=\Cl_{\mm_{n,\Sigma}}(K_n)[p^\infty].
\]
On a $\Z_p$-module with a $j$-action, put
\[
M^\pm=\frac{1\pm j}{2}M.
\]
Finite abelian $p$-groups, including the $p$-primary
subgroups of residue-field multiplicative groups and
the groups of $p$-power roots of unity, are written
additively when module operations are used.

We use the canonical decomposition
\[
G=\Gal(K_\infty/\Q)=\Delta\times\Gamma,
\qquad
\Delta\simeq(\Z/p\Z)^\times,
\qquad
\Gamma=\Gal(K_\infty/K_0).
\]
All Galois actions on ideals, class groups, and
residue-field groups are the natural actions induced
by field automorphisms.
In particular, $g[\mathfrak a]=[g\mathfrak a]$.
Artin reciprocity is normalized so that prime ideals
away from the conductor map to arithmetic Frobenius.

Let $\omega:\Delta\to\Z_p^\times$ be the Teichm\"uller
character.
All characters of $\Delta$ are taken to have values in
$\Z_p^\times$.
A character $\theta$ is called odd if $\theta(j)=-1$.
For a $\Z_p[\Delta]$-module $M$, put
\[
e_\theta=\frac{1}{p-1}\sum_{\delta\in\Delta}
\theta(\delta)^{-1}\delta,
\qquad
M(\theta)=e_\theta M.
\]
These idempotents are integral, so taking a character
component is exact.

For $a\in\Z_p^\times$, let $\delta_a\in\Delta$
correspond to $a\bmod p$.
We also write
\[
\omega(a)=\omega(\delta_a),
\qquad
\theta(a)=\theta(\delta_a),
\qquad
\langle a\rangle=a/\omega(a)\in1+p\Z_p.
\]
Thus the notation $\omega(\ell)$ or $\theta(\ell)$
for a prime $\ell\ne p$ uses reduction modulo $p$.

Throughout, $\ord_m(a)$ denotes multiplicative order
modulo $m$, for $a$ prime to $m$, not a valuation.
For a prime $\ell\ne p$, define
\begin{equation}\label{eq:parameters}
e_\ell=\ord_p(\ell),\qquad
a_\ell=v_p(\ell^{e_\ell}-1),\qquad
s_\ell=p^{a_\ell-1},
\end{equation}
and put
\begin{equation}\label{eq:b-d}
b_\ell=\frac{p-1}{2\ord_p(\ell^2)}
=\begin{cases}
(p-1)/e_\ell,&e_\ell\text{ even},\\
(p-1)/(2e_\ell),&e_\ell\text{ odd},
\end{cases}
\qquad
d_\ell=b_\ell s_\ell.
\end{equation}
The integer $d_\ell$ is the number of primes above
$\ell$ in $K_\infty^+$; see
Lemma~\ref{lem:decomposition}.
For $n\geq0$, set
\begin{equation}\label{eq:q-m}
q_{n,\ell}=p^{\min(n,a_\ell-1)},
\qquad
m_{n,\ell}=\max(a_\ell,n+1).
\end{equation}
Define
\[
\epsilon_\ell(\theta)=
\begin{cases}
1,&\theta(\delta_\ell)=\omega(\ell),\\
0,&\theta(\delta_\ell)\ne\omega(\ell).
\end{cases}
\]
We call $\theta$ matching for $\ell$ if
$\epsilon_\ell(\theta)=1$.
Finally, put
\[
D_{n,\Sigma}
=\ker\bigl(A_{n,\Sigma}^-\longrightarrow A_n^-\bigr).
\]

Let $h_{n,\Sigma}$ and $h_{n,\Sigma}^+$ be the ray
class numbers for $(K_n,\mm_{n,\Sigma})$ and
$(K_n^+,\mm_{n,\Sigma}^+)$, respectively, and set
\[
h_{n,\Sigma}^-=\frac{h_{n,\Sigma}}{h_{n,\Sigma}^+},
\qquad
h_n^-=\frac{\#\Cl(K_n)}{\#\Cl(K_n^+)}.
\]
Lemma~\ref{lem:relative} relates these quotients to
the minus parts.

\begin{theorem}[Finite local corrections]
\label{thm:finite}
For every $n\geq0$ and every odd character $\theta$,
\begin{equation}\label{eq:finite-character-length}
\length_{\Z_p}D_{n,\Sigma}(\theta)
=\sum_{\ell\in\Sigma}
\epsilon_\ell(\theta)q_{n,\ell}m_{n,\ell}
-(n+1)\mathbf 1_{\theta=\omega},
\end{equation}
where $\mathbf 1_{\theta=\omega}$ is $1$ if
$\theta=\omega$ and $0$ otherwise.
If $\theta\ne\omega$, then
\begin{equation}\label{eq:finite-nonexceptional}
D_{n,\Sigma}(\theta)
\simeq
\bigoplus_{\substack{\ell\in\Sigma\\
\epsilon_\ell(\theta)=1}}
(\Z/p^{m_{n,\ell}}\Z)^{q_{n,\ell}}
\end{equation}
as $\Z_p$-modules.

Choose $\ell_0\in\Sigma$ with $a_{\ell_0}$ minimal,
and abbreviate
$m_0=m_{n,\ell_0}$ and $q_0=q_{n,\ell_0}$.
Then
\begin{equation}\label{eq:finite-exceptional}
\begin{split}
D_{n,\Sigma}(\omega)\simeq{}&
\Z/p^{m_0-n-1}\Z
\oplus(\Z/p^{m_0}\Z)^{q_0-1}\\
&\oplus
\bigoplus_{\ell\in\Sigma\setminus\{\ell_0\}}
(\Z/p^{m_{n,\ell}}\Z)^{q_{n,\ell}}.
\end{split}
\end{equation}
Here and below, $\Z/p^0\Z$ means the zero module,
and an empty direct sum is zero.

Moreover,
\begin{equation}\label{eq:finite-class-number}
v_p(h_{n,\Sigma}^-)-v_p(h_n^-)
=\sum_{\ell\in\Sigma}
b_\ell q_{n,\ell}m_{n,\ell}-(n+1).
\end{equation}
In particular, if
$n+1\geq\max_{\ell\in\Sigma}a_\ell$, then
\begin{equation}\label{eq:stable-class-number}
v_p(h_{n,\Sigma}^-)-v_p(h_n^-)
=(n+1)\Bigl(\sum_{\ell\in\Sigma}d_\ell-1\Bigr).
\end{equation}
\end{theorem}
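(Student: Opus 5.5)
We start from the ray class exact sequence for $(K_n,\mm_{n,\Sigma})$:
\[
\OO_{K_n}^\times\longrightarrow
\prod_{\ell\in\Sigma}\prod_{\mathfrak l\mid\ell}k_{\mathfrak l}^\times
\longrightarrow \Cl_{\mm_{n,\Sigma}}(K_n)
\longrightarrow \Cl(K_n)\longrightarrow 0 .
\]
We take $p$-primary parts, which is exact after tensoring with $\Z_p$, and then minus parts. By the unit theorem for CM fields, $(\OO_{K_n}^\times\otimes\Z_p)^-=\mu_{p^{n+1}}$. Hence
\[
D_{n,\Sigma}\simeq L_n^-/\mathrm{im}(\mu_{p^{n+1}}),
\qquad
L_n=\bigoplus_{\ell\in\Sigma}\bigoplus_{\mathfrak l\mid\ell}k_{\mathfrak l}^\times[p^\infty].
\]
The map $\mu_{p^{n+1}}\to L_n$ is injective, since a nontrivial $p$-power root of unity stays nontrivial modulo any $\mathfrak l\nmid p$. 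Taking $\theta$-components is exact. The diagonal $\mu_{p^{n+1}}$ lies entirely in the $\omega$-component, because $\Delta$ acts on $\mu_{p^{n+1}}$ through $\omega$. So for $\theta\ne\omega$ we get $D_{n,\Sigma}(\theta)=L_n(\theta)$. For $\theta=\omega$ we get $L_n(\omega)$ modulo a cyclic group of order $p^{n+1}$, which already gives the $-(n+1)\mathbf 1_{\theta=\omega}$ in \eqref{eq:finite-character-length}.

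Next we compute each $\ell$-summand as a Galois module. Using Lemma~\ref{lem:decomposition}, the decomposition group of $\ell$ in $\Gal(K_n/\Q)$ is the cyclic group generated by Frobenius $\sigma_\ell$ (the element $\zeta\mapsto\zeta^\ell$). Its order is $f_n=e_\ell\,p^{\max(0,n+1-a_\ell)}$, using that $\ord_{p^{n+1}}(\ell)=e_\ell p^{\max(0,n+1-a_\ell)}$. Then
\[
\bigoplus_{\mathfrak l\mid\ell}k_{\mathfrak l}^\times[p^\infty]\simeq \Ind_{\langle\sigma_\ell\rangle}^{G_n}\,\mathbb F_{\ell^{f_n}}^\times[p^\infty],
\]
where $\sigma_\ell$ acts on the cyclic group $\mathbb F_{\ell^{f_n}}^\times[p^\infty]$ as multiplication by $\ell$. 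This cyclic group has order $p^{v_p(\ell^{f_n}-1)}=p^{m_{n,\ell}}$ by lifting the exponent. Restricted to $\Delta$, the induced module splits according to the image of $\langle\sigma_\ell\rangle$ in $\Delta$. We get a nonzero $\theta$-part exactly when $\theta(\delta_\ell)=\omega(\ell)$. The reason is that $\sigma_\ell$ acts by $\ell$, and $\theta(\delta_\ell)$ must equal the root-of-unity part of that scalar, namely $\omega(\ell)$; here we use that the tame part of $\sigma_\ell$ has order $e_\ell$ prime to $p$. In that case $L_n(\theta)$ is induced from $\langle\sigma_\ell\rangle\cap\Gamma_n$ to $\Gamma_n=\Gal(K_n/K_0)$. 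The index is
\[
[\Gamma_n:\langle\sigma_\ell^{e_\ell}\rangle]=p^{n}/p^{\max(0,n+1-a_\ell)}=q_{n,\ell}.
\]
As a $\Z_p$-module it is therefore $(\Z/p^{m_{n,\ell}}\Z)^{q_{n,\ell}}$. This gives \eqref{eq:finite-nonexceptional} and the length formula for $\theta\ne\omega$. The minus condition is automatic for odd $\theta$.

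The main obstacle is the exceptional structure \eqref{eq:finite-exceptional}. There we must identify the elementary divisors of the quotient by the diagonal $\mu_{p^{n+1}}$. The plan is to make the embedding explicit. In each $\ell$-block the image of $\zeta$ sits in the $p^{n+1}$-torsion of every cyclic summand. Since $m_{n,\ell}\ge n+1$, this is the element $p^{m_{n,\ell}-n-1}$ times a generator, taken diagonally over the $q_{n,\ell}$ copies. Thus the image of a generator of $\mu_{p^{n+1}}$ is
\[
x=\sum_{\ell}\sum_{i}p^{m_{n,\ell}-n-1}g_{\ell,i}
\]
in a basis $\{g_{\ell,i}\}$ adapted to the cyclic decomposition. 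To find the quotient we perform a change of basis. We replace $g_{\ell_0,1}$ by
\[
g'=\sum_{\ell,i}p^{m_{n,\ell}-m_0}g_{\ell,i}.
\]
This is legitimate because $a_{\ell_0}$ minimal forces $m_0\le m_{n,\ell}$ for every $\ell$. The element $g'$ has order exactly $p^{m_0}$, and the new family is still a basis. Then $x=p^{m_0-n-1}g'$, so the quotient by $\langle x\rangle$ is $\Z/p^{m_0-n-1}\Z$ plus the untouched summands, which is \eqref{eq:finite-exceptional}.

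Finally, \eqref{eq:finite-class-number} follows by summing over odd $\theta$. Lemma~\ref{lem:relative} identifies $v_p(h^-_{n,\Sigma})-v_p(h_n^-)$ with $\length_{\Z_p}D_{n,\Sigma}$; we use the exact sequence above, whose minus part is exact since $p$ is odd. It remains to count the odd characters matching $\ell$. Matching means $\theta$ is prescribed on $\delta_\ell$, and oddness prescribes it on $j$. If $e_\ell$ is odd, $\langle\delta_\ell,j\rangle$ has order $2e_\ell$ and both conditions are consistent. If $e_\ell$ is even, $j\in\langle\delta_\ell\rangle$, and consistency follows from $\omega(\ell)^{e_\ell/2}=-1$. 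In either case the number of such $\theta$ is $(p-1)/|\langle\delta_\ell,j\rangle|=b_\ell$. Summing gives \eqref{eq:finite-class-number}. When $n+1\ge a_\ell$ we have $q_{n,\ell}=s_\ell$ and $m_{n,\ell}=n+1$, so $b_\ell q_{n,\ell}m_{n,\ell}=d_\ell(n+1)$, which gives \eqref{eq:stable-class-number}.
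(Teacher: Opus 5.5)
Your proposal is correct and follows essentially the paper's route: the minus ray class sequence with $(E_{K_n}\otimes\Z_p)^-=\mu_{p^{n+1}}$, an induced-module description of the residue-field groups giving $\theta$-components $(\Z/p^{m_{n,\ell}}\Z)^{q_{n,\ell}}$ exactly for matching $\theta$, a triangular change of basis using the minimal $m_0$ to absorb the single diagonal $\mu_{p^{n+1}}$, and the count of $b_\ell$ matching odd characters. The differences are only in presentation. You induce from the full decomposition group $\langle\sigma_\ell\rangle$ and read off the $\theta$-part as induced to $\Gal(K_n/K_0)$, where the paper counts $\Delta$-orbits. Your single replacement $g_{\ell_0,1}\mapsto g'$ merges the paper's within-block normalization and cross-block subtraction into one step.
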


The theorem determines the local kernel, but not the
extension class of $A_{n,\Sigma}^-$ as an extension
of $A_n^-$ by $D_{n,\Sigma}$.
Before splitting stabilizes, the local blocks have
exponent $p^{a_\ell}$ rather than $p^{n+1}$.
For several auxiliary primes, only one diagonal group
of global roots of unity is removed.
Thus the exceptional component is not the direct sum
of the singleton corrections.
Examples~\ref{ex:5-7} and \ref{ex:multi} illustrate
these points.

\begin{remark}[Finite-level comparison]
\label{rem:finite-comparison}
Theorem~\ref{thm:finite} follows from the ray class
sequence and the decomposition of the auxiliary primes.
It requires neither a main conjecture nor the vanishing
of an Iwasawa $\mu$-invariant.
For $\Sigma=\{2\}$, its stable class-number formula
recovers Finat's comparison; the coefficient $d_\ell$
for an arbitrary auxiliary prime already appears in
\cite[Proposition 3.2]{Finat19}.

The finite-level Fitting-ideal framework of
\cite[Conjecture 3.2 and Corollary 3.7]{Kur21}
concerns the Pontryagin dual of the full minus
$T$-ray class group; see also \cite{DK23} for
related finite-level Brumer--Stark results.
For $K_n/\Q$, the auxiliary prime set denoted by
$T$ in that framework is $\Sigma$ here.
These results and Theorem~\ref{thm:finite} address
different questions: equivariant ideals of the full
ray class group, and the explicit $\Z_p$-structure of
its local kernel.
A Fitting ideal over a group ring does not, in general,
determine that kernel or its elementary divisors.

The torsion-freeness hypothesis on congruence units
in some integral formulations also requires care for
$\Sigma=\{2\}$, since $-1\equiv1\pmod2$.
This prime-to-$p$ torsion causes no difficulty in our
$p$-primary ray class sequence.
\end{remark}

For the bottom-layer applications, put
\[
G_k^{(\ell)}=\ell(1-\ell^k)B_k,
\]
where the Bernoulli numbers are normalized by
\[
\frac{t}{e^t-1}
=\sum_{k\geq0}B_k\frac{t^k}{k!}.
\]
For a nonzero rational number $x$, the notation
$p\mid x$ means $v_p(x)>0$.
An odd prime $p$ is Bernoulli regular if it divides
none of $B_2,B_4,\ldots,B_{p-3}$; for $p=3$ this
condition is vacuous.

\begin{corollary}[A ray class consequence of Herbrand--Ribet]
\label{cor:HR}
Let $p\geq5$, let $\ell\ne p$ be prime, and let
$2\leq k\leq p-3$ be even.
Then
\[
A_{0,\{\ell\}}(\omega^{1-k})\ne0
\quad\Longleftrightarrow\quad
p\mid G_k^{(\ell)}.
\]
Moreover,
\[
\begin{split}
&\length_{\Z_p}A_{0,\{\ell\}}(\omega^{1-k})
-\length_{\Z_p}A_0(\omega^{1-k})\\
&\hspace{30mm}=
\begin{cases}
a_\ell,&\ell^k\equiv1\pmod p,\\
0,&\ell^k\not\equiv1\pmod p.
\end{cases}
\end{split}
\]
\end{corollary}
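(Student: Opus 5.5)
We apply Theorem~\ref{thm:finite} at $n=0$ with $\Sigma=\{\ell\}$ and $\theta=\omega^{1-k}$, then combine it with Herbrand--Ribet for $A_0(\omega^{1-k})$.

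Since $k$ is even with $2\le k\le p-3$, the character $\theta=\omega^{1-k}$ is odd. Moreover $1-k\not\equiv1\pmod{p-1}$, so $\theta\ne\omega$ and the exceptional correction does not occur. The matching condition $\epsilon_\ell(\theta)=1$ reads $\omega(\ell)^{1-k}=\omega(\ell)$, i.e.\ $\omega(\ell)^k=1$, i.e.\ $\ell^k\equiv1\pmod p$. At $n=0$ we have $q_{0,\ell}=p^{0}=1$ and $m_{0,\ell}=\max(a_\ell,1)=a_\ell$, since $a_\ell\ge1$. So \eqref{eq:finite-character-length} gives $\length_{\Z_p}D_{0,\{\ell\}}(\theta)=a_\ell$ if $\ell^k\equiv1\pmod p$, and $0$ otherwise.

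Next we show the natural map $A_{0,\{\ell\}}^-\to A_0^-$ is surjective. Any ideal class has a representative prime to $\mm_{0,\{\ell\}}$, so the map of ray class group to class group is surjective. Surjectivity passes to $p$-parts because these are direct summands, and to minus parts and $\theta$-components because the idempotents are integral and taking components is exact. Hence we have an exact sequence $0\to D_{0,\{\ell\}}(\theta)\to A_{0,\{\ell\}}(\theta)\to A_0(\theta)\to0$, and lengths add. This is the second displayed formula.

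For the equivalence, Herbrand--Ribet gives $A_0(\omega^{1-k})\ne0$ iff $p\mid B_k$. By the exact sequence, $A_{0,\{\ell\}}(\theta)\ne0$ iff either $p\mid B_k$ or $\ell^k\equiv1\pmod p$. Since $a_\ell\ge1$, the second condition indeed gives a nonzero kernel.

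It remains to show that this condition is equivalent to $p\mid G_k^{(\ell)}=\ell(1-\ell^k)B_k$, with $v_p$ taken on rationals. There are two cases.
\begin{itemize}
\item If $\ell\ne p$, then $v_p(\ell)=0$, so $v_p(G_k^{(\ell)})=v_p(1-\ell^k)+v_p(B_k)$.
\item By von Staudt--Clausen, $B_k$ is $p$-integral because $p-1\nmid k$ (as $k\le p-3$), so $v_p(B_k)\ge0$. Also $v_p(1-\ell^k)\ge0$.
\end{itemize}
Hence $v_p(G_k^{(\ell)})>0$ iff $p\mid B_k$ or $\ell^k\equiv1\pmod p$.

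The main point needing care is the surjectivity and exactness on $\theta$-components; all the arithmetic is supplied by Theorem~\ref{thm:finite} and Herbrand--Ribet.
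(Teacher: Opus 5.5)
Your proposal is correct and follows the paper's proof. Both arguments specialize Theorem~\ref{thm:finite} at $n=0$ to the non-exceptional odd character $\theta=\omega^{1-k}$, which is matching exactly when $\ell^k\equiv1\pmod p$. Both then use the short exact sequence of $\theta$-components and combine Herbrand--Ribet with the $p$-integrality of $B_k$. The only difference is that you prove surjectivity of $A_{0,\{\ell\}}^-\to A_0^-$ directly, whereas the paper takes it from the exact sequence of Proposition~\ref{prop:minus-sequence}.
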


Here $B_k$ detects the ordinary contribution and
$1-\ell^k$ detects the local one.
The exceptional character $\omega$ instead gives
\[
A_{0,\{\ell\}}(\omega)\simeq\Z/p^{a_\ell-1}\Z.
\]
For this singleton modulus, the exceptional component
is nonzero precisely when
$\ell^{p-1}\equiv1\pmod{p^2}$; see
Proposition~\ref{prop:omega}.

\subsection*{The local inverse limit and exact control}

Choose a topological generator $\gamma$ of $\Gamma$.
Let $\kappa:G\to\Z_p^\times$ be the cyclotomic
character, so that $\kappa|_\Delta=\omega$, and put
\[
u=\kappa(\gamma),\qquad
\Lambda=\Z_p[[\Gamma]]=\Z_p[[T]],
\qquad
\gamma=1+T.
\]
Then $u$ topologically generates $1+p\Z_p$, and
$v_p(u-1)=1$.
For $z\in\Z_p$, the notation $(1+T)^z$ means the
binomial series
\[
(1+T)^z=\sum_{r\geq0}\binom zr T^r\in\Lambda.
\]
Define
\[
L(T)=(1+T)-u,\qquad
R_\ell(T)=(1+T)^{s_\ell}-u^{s_\ell},
\qquad
S_\ell(T)=R_\ell(T)/L(T).
\]
We use the convention $\Lambda/(1)=0$.

Put
\[
\Gamma_n=\Gamma^{p^n},
\qquad
\Omega_n(T)=(1+T)^{p^n}-1.
\]
For a $\Lambda$-module $M$, write
\[
M_{\Gamma_n}=M/\Omega_nM,
\qquad
M[\Omega_n]=\ker(\Omega_n:M\to M).
\]

Let
\[
B_{n,\ell}
=\prod_{\mathfrak l\mid\ell}
k_{\mathfrak l}^{\times}[p^\infty],
\qquad
B_{n,\Sigma}=\bigoplus_{\ell\in\Sigma}B_{n,\ell},
\qquad
Y_\ell=\varprojlim_n B_{n,\ell}.
\]
The transition maps for $Y_\ell$ are residue-field
norms, multiplied over the primes above a given prime
below.
Set
\[
Y_\Sigma=\bigoplus_{\ell\in\Sigma}Y_\ell,
\qquad
Y_\Sigma^-=\frac{1-j}{2}Y_\Sigma,
\]
and
\[
X^-=\varprojlim_n A_n^-,
\qquad
X_\Sigma^-=\varprojlim_n A_{n,\Sigma}^-,
\]
with norm transition maps.
These inverse limits carry their natural compact
topologies, and their Galois actions are continuous.

The Tate module is
\[
\Z_p(1)=\varprojlim_r\mu_{p^r},
\]
where the transition maps are $\zeta\mapsto\zeta^p$.
The group $G$ acts on it through $\kappa$; in
particular,
\[
\Z_p(1)\simeq\Lambda/(L(T))
\]
as a $\Lambda$-module.
Simultaneous reduction induces an embedding
$\Z_p(1)\hookrightarrow Y_\Sigma^-$.
Write
\[
C_\Sigma=Y_\Sigma^-/\Z_p(1).
\]

\begin{theorem}[The Tate quotient and exact control]
\label{thm:correction}
There is a natural exact sequence
\[
0\longrightarrow C_\Sigma
\longrightarrow X_\Sigma^-
\longrightarrow X^-
\longrightarrow0.
\]
If $a_{\ell_0}=\min_{\ell\in\Sigma}a_\ell$, then
\begin{equation}\label{eq:multi-quotient}
\begin{split}
C_\Sigma\simeq{}&
\Lambda/(S_{\ell_0})
\oplus
\bigl(\Lambda/(R_{\ell_0})\bigr)^{b_{\ell_0}-1}\\
&\oplus
\bigoplus_{\ell\in\Sigma\setminus\{\ell_0\}}
\bigl(\Lambda/(R_\ell)\bigr)^{b_\ell}
\end{split}
\end{equation}
as $\Lambda$-modules.
This decomposition is not asserted to be canonical.

For every $n\geq0$, projection induces natural
isomorphisms
\begin{equation}\label{eq:local-descent}
(Y_\ell)_{\Gamma_n}\simeq B_{n,\ell},
\qquad
(C_\Sigma)_{\Gamma_n}\simeq D_{n,\Sigma},
\end{equation}
and
\begin{equation}\label{eq:global-control}
(X^-)_{\Gamma_n}\simeq A_n^-,
\qquad
(X_\Sigma^-)_{\Gamma_n}\simeq A_{n,\Sigma}^-.
\end{equation}
In particular, $C_\Sigma$ is free over $\Z_p$, with
\[
\rank_{\Z_p}C_\Sigma
=\sum_{\ell\in\Sigma}d_\ell-1.
\]
\end{theorem}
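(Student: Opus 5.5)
The plan is to work first at finite level with the ray class sequence, then pass to the inverse limit. For each $n$, the exact sequence
\[
\OO_{K_n}^\times\otimes\Z_p\to B_{n,\Sigma}\to A_{n,\Sigma}\to A_n\to0,
\]
taken on minus parts, gives $0\to D_{n,\Sigma}\to A_{n,\Sigma}^-\to A_n^-\to0$ with $D_{n,\Sigma}=B_{n,\Sigma}^-/\mu_{p^{n+1}}$. Two inputs justify this. First, the minus part of the $p$-completed unit group is $\mu_{p^{n+1}}$, by the Kummer unit lemma $[E:WE^+]\le2$ and $p$ odd. Second, the reduction map $\mu_{p^{n+1}}\to B_{n,\ell}$ is injective, because $\ell\neq p$.

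These sequences are compatible with norms. All terms are finite, so Mittag-Leffler holds, and taking $\varprojlim$ gives $0\to\varprojlim D_{n,\Sigma}\to X_\Sigma^-\to X^-\to0$. The limit of $\mu_{p^{n+1}}$ under norms is $\Z_p(1)$. Since $\varprojlim^1\mu_{p^{n+1}}=0$, we get $\varprojlim D_{n,\Sigma}=Y_\Sigma^-/\Z_p(1)=C_\Sigma$.

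Next I will compute $Y_\ell$ explicitly. Lemma~\ref{lem:decomposition} describes the decomposition of $\ell$ in $K_\infty$. There the decomposition group is topologically generated by Frobenius $\ell$. The primes above $\ell$ in $K_\infty$ form $\Delta\times\Gamma/\langle\mathrm{Frob}_\ell\rangle$, an orbit with $p-1\over e_\ell$ times $s_\ell$ elements.

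For each prime $\mathfrak L$ of $K_\infty$ over $\ell$, the limit of the residue-field $p$-parts is $\Z_p(1)$, on which Frobenius acts by $\ell$. Hence
\[
Y_\ell\simeq\Ind_{D_\ell}^{G}\Z_p(1)\simeq\Z_p[[G]]\otimes_{\Z_p[[D_\ell]]}\Z_p(1).
\]
To find the $\Lambda$-structure I will write $\mathrm{Frob}_\ell=\delta_\ell\gamma^{z}$ with $u^{z}=\langle\ell\rangle$, so that $v_p(z)=a_\ell-1$ and $\gamma^{s_\ell}$ generates the $\Gamma$-part of $D_\ell$.

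Taking $\theta$-components, $Y_\ell(\theta)$ vanishes unless $\theta(\delta_\ell)=\omega(\ell)$. Otherwise it is a free module of rank $q$ over $\Lambda/((1+T)^{s_\ell}-u^{s_\ell})$. Summing over odd matching $\theta$, I expect $Y_\ell^-\simeq(\Lambda/(R_\ell))^{b_\ell}$; the count $b_\ell$ comes from the matching characters times the $\Delta$-orbit bookkeeping with $j$.

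The expected main obstacle is the quotient by the diagonal $\Z_p(1)\simeq\Lambda/(L)$, since $L\mid R_\ell$. The diagonal element is the sum of the images of the generator over all blocks, and I will pick a basis in which it is concentrated in a single cyclic block.

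Let $\ell_0$ have minimal $a_{\ell_0}$, so $s_{\ell_0}\mid s_\ell$ and hence $R_{\ell_0}\mid R_\ell$. In $\Lambda/(R_\ell)$ the $\Z_p(1)$-line is $S_\ell\cdot\Lambda/(R_\ell)$, generated by $S_\ell$. This holds because $\Lambda/(R_\ell)$ is free over $\Z_p$ and the kernel of $L$ is $(S_\ell)$. The diagonal generator is then $(S_{\ell_0},\dots,S_\ell,\dots)$ in the various blocks.

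Applying an automorphism that subtracts $(R_\ell/R_{\ell_0})$-multiples of the $\ell_0$ basis vector removes the other components. Concretely, $S_\ell=(R_\ell/R_{\ell_0})S_{\ell_0}$, and the map $\Lambda/(R_{\ell_0})\to\Lambda/(R_\ell)$ given by multiplication by $R_\ell/R_{\ell_0}$ is well defined. After this change of basis the generator sits in one block, with image $S_{\ell_0}\Lambda/(R_{\ell_0})$. Since $\Lambda/(R_{\ell_0})$ modulo this image is $\Lambda/(S_{\ell_0})$, we obtain \eqref{eq:multi-quotient}.

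It remains to check this manipulation carefully within the minus part, and to check that $C_\Sigma$ is $\Z_p$-free. Freeness follows because $S_{\ell_0}$ and the $R_\ell$ are distinguished polynomials, hence not divisible by $p$. The rank is $\sum b_\ell s_\ell-1$.

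For descent, $(Y_\ell)_{\Gamma_n}\simeq B_{n,\ell}$ follows from the induced-module description: coinvariants of $\Ind_{D_\ell}^{G}\Z_p(1)$ give induction from the decomposition group in $\Gal(K_n/\Q)$ of $\mu_{p^{m_{n,\ell}}}$, matching the residue fields at level $n$.

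For $C_\Sigma$, take the coinvariant sequence
\[
\Z_p(1)_{\Gamma_n}\to(Y^-_\Sigma)_{\Gamma_n}\to(C_\Sigma)_{\Gamma_n}\to0.
\]
The first term is $\mu_{p^{n+1}}$ and maps injectively, so the cokernel is $D_{n,\Sigma}$.

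The statement $(X^-)_{\Gamma_n}\simeq A_n^-$ is the classical control for minus parts: $K_\infty/K_0$ is totally ramified at $p$ with a single prime, and the minus part kills the obstruction. I will cite this. The ray version then follows from the snake lemma applied to the coinvariant sequences of $0\to C_\Sigma\to X_\Sigma^-\to X^-\to0$. Here we need $X^-[\Omega_n]\to C_{\Sigma,\Gamma_n}$ to be zero. This holds because the finite-level sequence is already exact on the left, and comparing orders, via Theorem~\ref{thm:finite}, forces the map to be an isomorphism.
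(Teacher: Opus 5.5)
Your proposal is correct in substance. It matches the paper for the inverse-limit sequence, the local presentation, the triangular change of coordinates (your $R_\ell/R_{\ell_0}$ is the paper's $V_\ell$), freeness and rank, and the descent for $Y_\ell$ and $C_\Sigma$.

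The genuine difference is the ray class control step. The paper gets left exactness of
\[
(C_\Sigma)_{\Gamma_n}\longrightarrow(X_\Sigma^-)_{\Gamma_n}\longrightarrow(X^-)_{\Gamma_n}\longrightarrow0
\]
from $X^-[\Omega_n]=0$. It deduces this from the $\Z_p$-freeness of $X^-$ (Ferrero--Washington plus the noncapitulation Lemma~\ref{lem:noncapitulation}) together with the finiteness of $X^-/\Omega_nX^-$. You instead read left exactness off the comparison diagram. The composite $(C_\Sigma)_{\Gamma_n}\to(X_\Sigma^-)_{\Gamma_n}\to A_{n,\Sigma}^-$ equals $(C_\Sigma)_{\Gamma_n}\xrightarrow{\sim}D_{n,\Sigma}\hookrightarrow A_{n,\Sigma}^-$, which is injective. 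Hence the snake boundary $X^-[\Omega_n]\to(C_\Sigma)_{\Gamma_n}$ is zero, and the five lemma gives the middle isomorphism.

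This is valid, and the order count via Theorem~\ref{thm:finite} is not needed. Your route shows that Theorem~\ref{thm:correction} as stated requires neither Ferrero--Washington nor noncapitulation, only local descent and ordinary control. The paper's route additionally yields the $\Z_p$-freeness of $X^-$ and $X_\Sigma^-$ and the vanishing $X^-[\Omega_n]=0$. It uses these later for the determinant form of the characteristic ideals and for Corollary~\ref{cor:finite-analytic-length}.

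Two small points need tightening.
\begin{itemize}
\item Each matching component is cyclic, $Y_\ell(\theta)\simeq\Lambda/(R_\ell)$, of rank one over $\Lambda/(R_\ell)$ rather than rank $q$. Only then does summing over the $b_\ell$ odd characters with $\theta|_{H_\ell}=\omega|_{H_\ell}$ give $Y_\ell^-\simeq(\Lambda/(R_\ell))^{b_\ell}$.
\item Knowing that the Tate image lies in $\ker L=\Z_pS_\ell$ only shows it equals $c_\ell S_\ell$ for some $c_\ell\in\Z_p$. You need $c_\ell\in\Z_p^\times$, both for the triangular reduction and for the $\Z_p$-freeness of $C_\Sigma$. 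This holds because the diagonal vector $\sum_iv_i$ in the twisted permutation basis of $Y_\ell(\omega)$ is primitive and $\ker L$ is saturated. Alternatively, use the paper's identity $\sum_iv_i=u^{-(s_\ell-1)}S_\ell v_0$.
\end{itemize}
Finally, ordinary control holds for all of $X$, since $p$ has a single totally ramified prime in $K_\infty/K_0$. The minus part plays no role in it.
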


The local inverse limit is a twisted permutation module.
On a matching character it is $\Lambda/(R_\ell)$,
and the diagonal Tate embedding is represented in the
$\omega$-component by $(S_\ell)_{\ell\in\Sigma}$.
Since all $s_\ell$ are powers of $p$, choosing a
minimal $a_{\ell_0}$ permits a triangular change of
coordinates that moves the Tate vector into one block.

Local descent follows from residue-field norms and
a computation of $\Lambda/(R_\ell,\Omega_n)$.
Ordinary control follows from the fact that the unique
prime above $p$ is principal and totally ramified in
the tower.
Ferrero--Washington and the absence of finite submodules
give $\Z_p$-freeness; together with ordinary control,
this implies $X^-[\Omega_n]=0$.
The latter vanishing preserves left exactness when
coinvariants are taken in the displayed short exact
sequence, giving ray class control.

\begin{remark}[Infinite-level comparison]
\label{rem:inverse-comparison}
The local module $Y_\ell$ is identified, using stable
residue fields, with the corresponding residue-field
Tate module in \cite{GP15}.
The quotient by one global copy of $\Z_p(1)$ and
its Frobenius factors are already part of that
framework.
Theorem~\ref{thm:correction} gives an explicit integral
cyclic decomposition of this local quotient in the
cyclotomic case and identifies its coinvariants with
the finite local kernels.

Kurihara's Iwasawa-theoretic formula
\cite[Theorem 4.4]{Kur21} concerns the Pontryagin dual
of a direct limit of $T$-ray class groups.
Its $\mu=0$ hypothesis in the cyclotomic setting is
supplied by Ferrero--Washington \cite{FW79}.
Our formulas concern norm inverse limits.
We do not identify these global objects without the
appropriate duality and involution conventions; see
Remark~\ref{rem:general-theory}.
\end{remark}

\subsection*{Analytic factors and applications}

For a finitely generated torsion $\Lambda$-module $M$,
write $\charideal_\Lambda(M)$ for its characteristic
ideal, with $\charideal_\Lambda(0)=\Lambda$.
We use $\mu(M)$ and $\lambda(M)$ for the associated
Iwasawa invariants.

Write arithmetic Frobenius as
\[
\sigma_\ell=\delta_\ell\gamma^{z_\ell},
\qquad
z_\ell\in\Z_p,\qquad
u^{z_\ell}=\langle\ell\rangle.
\]
For an odd character $\theta$, put
\[
\mathcal E_{\ell,\theta}(T)
=1-\ell\theta(\delta_\ell)^{-1}(1+T)^{-z_\ell}.
\]
This is the $\theta$-component of
$1-\ell\sigma_\ell^{-1}$.
It generates $(R_\ell)$ when
$\epsilon_\ell(\theta)=1$, and is a unit otherwise.
Writing
\[
X(\theta)=e_\theta X^-,
\qquad
X_\Sigma(\theta)=e_\theta X_\Sigma^-,
\]
we obtain
\begin{equation}\label{eq:intro-char}
\charideal_\Lambda(X_\Sigma(\theta))
=
\charideal_\Lambda(X(\theta))
\left(
\frac{\prod_{\ell\in\Sigma}\mathcal E_{\ell,\theta}(T)}
{L(T)^{\mathbf 1_{\theta=\omega}}}
\right).
\end{equation}

For odd $\theta\ne\omega$, put
$\chi_\theta=\omega\theta^{-1}$.
The classical power-series realization of the
Kubota--Leopoldt $p$-adic $L$-function gives
$F_\theta\in\Lambda$ satisfying
\[
F_\theta(u^s-1)=L_p(s,\chi_\theta),
\qquad s\in\Z_p.
\]
In our norm inverse-limit convention, the classical
main conjecture gives
\[
\charideal_\Lambda(X(\theta))=(F_\theta).
\]

To realize \eqref{eq:intro-char} by a measure, we use
the regularized Bernoulli measures in the explicit
form described by Knospe and Washington \cite{KW21}.
For an integer $c>1$ prime to $p$, the integral
representation recalled there has regularization factor
\[
1-\chi_\theta(c)\langle c\rangle^{1-s}.
\]
For $c=\ell$, this is exactly
\[
\mathcal E_{\ell,\theta}(u^s-1)
=1-\chi_\theta(\ell)\langle\ell\rangle^{1-s}.
\]
Their residue-class formula fixes the measure
normalization and makes this equality explicit.
Applying the commuting regularization operators for
all $\ell\in\Sigma$ produces a bounded measure,
symmetric in $\Sigma$, whose transform satisfies
\[
\mathscr L_{\Sigma,\theta}
=F_\theta\prod_{\ell\in\Sigma}\mathcal E_{\ell,\theta}.
\]
Thus our measure formula uses the classical
regularization in the explicit formulation of
\cite{KW21}; the additional assertion is that its
factors realize the local ray class correction
computed above.
The construction and the comparison are given in
Section~\ref{sec:analytic}.

For $\theta=\omega$, ordinary control and
$A_0(\omega)=0$ imply $X(\omega)=0$.
The same regularized integral formula allows us to
treat this component directly.
The simple pole of $L_p(s,1)$ cancels exactly one
copy of $L(T)$ in the product of regularization factors.
Consequently, Theorem~\ref{thm:measure-main} gives
\[
\charideal_\Lambda(X_\Sigma(\theta))
=(\mathscr L_{\Sigma,\theta})
\]
for every odd $\theta$.
Exact control then yields
\[
\length_{\Z_p}A_{n,\Sigma}(\theta)
=
\sum_{\zeta^{p^n}=1}
v_p\bigl(\mathscr L_{\Sigma,\theta}(\zeta-1)\bigr),
\]
where the sum is over all $p^n$-th roots of unity
in $\overline{\Q}_p$.

The bottom-layer specialization at $T=0$ differs
from the weight specialization at $T=u^{1-k}-1$.
For odd $\theta\ne\omega$, the former gives
\[
\length_{\Z_p}A_{0,\Sigma}(\theta)
=
v_p\left(
L(0,\theta^{-1})
\prod_{\ell\in\Sigma}(1-\ell\theta^{-1}(\ell))
\right),
\]
whereas, for even $k\geq2$, the latter gives
\[
\mathscr L_{\{\ell\},\omega^{1-k}}(u^{1-k}-1)
=
-(1-p^{k-1})(1-\ell^k)\frac{B_k}{k}.
\]
This distinction explains why
Corollary~\ref{cor:HR} is a divisibility criterion,
rather than a length formula involving
$v_p(G_k^{(\ell)})$.

In Section~\ref{sec:applications}, we compare these
criteria with $\ell$-Genocchi irregularity and the
Wieferich condition, and record consequences of the
Eisenstein congruence theorems of \cite{KKMS23,MS23}.
These are existence results for congruences, not
correspondences between individual ray classes and
individual eigenforms.

\subsection*{Organization}

Section~\ref{sec:ray} establishes the negative ray
class sequence and the relative class-number interpretation.
Section~\ref{sec:finite} computes the finite local
modules and proves Theorem~\ref{thm:finite} and
Corollary~\ref{cor:HR}.
Section~\ref{sec:inverse} determines the local inverse
limit and its Tate quotient, proves exact control,
and derives characteristic ideals and growth formulas.
Section~\ref{sec:analytic} constructs the regularized
measure generators using the explicit formulas of
\cite{KW21}, including the exceptional character.
Section~\ref{sec:applications} treats special values,
modular-form consequences, and vanishing of the local
correction.

\section{The negative ray class sequence}
\label{sec:ray}

For a number field $K$ and a squarefree finite modulus
\[
\mm=\prod_{\mathfrak q\in\mathcal T}\mathfrak q,
\]
let $\Cl_\mm(K)$ denote the ray class group with no
infinite component in the modulus.
Write
\[
E_K=\OO_K^\times,\qquad
E_{K,\mm}
=\{\varepsilon\in E_K:
\varepsilon\equiv1\pmod{\mathfrak q}
\text{ for every }\mathfrak q\mid\mm\}.
\]
The standard ray class sequence is
\begin{equation}\label{eq:ray-sequence}
E_K\longrightarrow
\prod_{\mathfrak q\mid\mm}k_{\mathfrak q}^{\times}
\longrightarrow
\Cl_\mm(K)
\longrightarrow
\Cl(K)
\longrightarrow0.
\end{equation}
Its first map has kernel $E_{K,\mm}$.
This is the finite class-group sequence underlying the
$(S,T)$-refinement with $S=S_\infty$, where
$S_\infty$ denotes the set of infinite places of $K$;
see \cite{Gross88,GP15}.

The sequence below is the finite-level analogue of
the direct-limit sequence \cite[(10), (11)]{GP15}.
It also corresponds to the minus part of the sequence
\cite[(2.4), (2.5)]{Kur21}.
Here we retain the roots of unity explicitly, rather
than impose a torsion-freeness hypothesis on the
congruence units.

\begin{proposition}\label{prop:minus-sequence}
For every $n\geq0$, there is a natural exact sequence
\begin{equation}\label{eq:minus-sequence}
0\longrightarrow\mu_{p^{n+1}}
\longrightarrow B_{n,\Sigma}^-
\longrightarrow A_{n,\Sigma}^-
\longrightarrow A_n^-
\longrightarrow0.
\end{equation}
The first map is simultaneous reduction.
These sequences are compatible with norms.
\end{proposition}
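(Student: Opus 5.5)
Starting from the standard ray class sequence \eqref{eq:ray-sequence} for $K=K_n$ and $\mm=\mm_{n,\Sigma}$, we tensor with $\Z_p$, i.e.\ pass to $p$-primary parts. This is exact because $\Z_p$ is flat over $\Z$, and for finite groups tensoring with $\Z_p$ simply extracts the $p$-primary subgroup. All maps in the sequence are $G$-equivariant for the natural actions, and $2$ is invertible in $\Z_p$. Hence $M\mapsto M^-=\frac{1-j}{2}M$ is an exact functor on $\Z_p[\langle j\rangle]$-modules, and we get an exact sequence
\[
(E_{K_n}\otimes\Z_p)^-\longrightarrow B_{n,\Sigma}^-\longrightarrow A_{n,\Sigma}^-\longrightarrow A_n^-\longrightarrow0.
\]
Here we identify $\bigl(\prod_{\mathfrak q\mid\mm}k_{\mathfrak q}^\times\bigr)\otimes\Z_p$ with $B_{n,\Sigma}$, which holds since each prime above each $\ell\in\Sigma$ appears exactly once in the squarefree modulus.

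Next we compute $(E_{K_n}\otimes\Z_p)^-$. We use the classical fact that $E_{K_n}=\mu(K_n)E_{K_n^+}$ up to index at most $2$ (Kummer's lemma for CM fields). Since $p$ is odd, this gives $E_{K_n}\otimes\Z_p=(\mu_{p^{n+1}}\otimes\Z_p)\oplus(E_{K_n^+}\otimes\Z_p)$. The second summand is fixed by $j$, so its minus part vanishes. On $\mu_{p^{n+1}}$ the involution $j$ acts by inversion, i.e.\ by $-1$ additively, so $\mu_{p^{n+1}}$ lies entirely in the minus part. Thus $(E_{K_n}\otimes\Z_p)^-=\mu_{p^{n+1}}$, the $p$-part of $\mu(K_n)$, and the first map is the restriction of the reduction map, i.e.\ simultaneous reduction.

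It remains to show that this first map is injective. This is the one point needing an actual argument, though it is easy. Suppose $\zeta\in\mu_{p^{n+1}}$ satisfies $\zeta\equiv1\pmod{\mathfrak l}$ for some $\mathfrak l\mid\ell$ with $\ell\in\Sigma$ (nonempty), and $\zeta\ne1$. Then $1-\zeta$ is divisible by $\mathfrak l$. But $1-\zeta$ divides $p$ (up to units it is a power of the prime above $p$), and $\mathfrak l\nmid p$ since $\ell\ne p$. This is a contradiction. Equivalently, $x^{p^{n+1}}-1$ is separable modulo $\ell$, so reduction is injective on $p$-power roots of unity. Note that this also covers $\ell=2$: the failure of torsion-freeness there comes only from $-1$, which has disappeared upon taking $p$-parts.

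Finally, for norm compatibility we use $N=N_{K_{n+1}/K_n}$. It acts on ideals (giving the class-group and ray-class norm maps, the latter well defined because the moduli are compatible, $\mm_{n+1,\Sigma}$ lying over $\mm_{n,\Sigma}$), on residue fields (the product of the residue-field norms over primes above a given prime), and on units. The maps in \eqref{eq:ray-sequence} commute with these norms: this is the standard compatibility, since for $\alpha\equiv x$ at the primes above $\mathfrak l$, the norm of the principal ideal $(\alpha)$ is $(N\alpha)$ and $N\alpha$ reduces to the product of the residue-field norms. The norms commute with $j$ because the Galois group is abelian. On roots of unity, $N_{K_{n+1}/K_n}$ restricted to $\mu_{p^{n+2}}$ is $\zeta\mapsto\zeta^p$, which is compatible with residue norms via reduction. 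The main (mild) obstacle is just bookkeeping of this norm compatibility at the ray-class level; the exactness itself is formal.
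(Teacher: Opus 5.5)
Your proof is correct and follows the paper's argument: you tensor the ray class sequence with $\Z_p$, apply $(1-j)/2$, identify the minus units with $\mu_{p^{n+1}}$, and then check injectivity of reduction and norm compatibility. The differences are minor. You get the minus units from $[E_{K_n}:\mu(K_n)E_{K_n^+}]\le 2$, whereas the paper uses Kronecker's theorem directly (which underlies that index bound), and you derive injectivity from $1-\zeta\mid p$ rather than by reducing $1+\zeta+\cdots+\zeta^{p^r-1}=0$. The identity $N_{K_{n+1}/K_n}(\zeta)=\zeta^p$ on $\mu_{p^{n+2}}$, which you state without proof, is the short computation $\prod_{t=0}^{p-1}\zeta^{1+tp^{n+1}}=\zeta^{p+p^{n+2}(p-1)/2}=\zeta^p$, using that $p$ is odd.
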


\begin{proof}
Let $\mu(K_n)$ denote the full group of roots of
unity in $K_n$.
For $\varepsilon\in E_{K_n}$, the quotient
$\varepsilon/j(\varepsilon)$ has absolute value $1$
under every complex embedding.
Kronecker's theorem implies that it is a root of unity.
Thus $j$ acts trivially on $E_{K_n}/\mu(K_n)$, and
\[
(E_{K_n}\otimes_{\Z}\Z_p)^-=\mu_{p^{n+1}}.
\]

Reduction of $p$-power roots of unity at a prime away
from $p$ is injective.
Indeed, if a nontrivial root $\zeta$ of order $p^r$
reduced to $1$, then reducing
\[
1+\zeta+\cdots+\zeta^{p^r-1}=0
\]
would force the residue characteristic to divide $p^r$.
Since $\Sigma$ is nonempty, simultaneous reduction is
injective.

Tensor \eqref{eq:ray-sequence} with the flat
$\Z$-module $\Z_p$, and apply the exact idempotent
$(1-j)/2$.
For a finite abelian group $M$, the natural
identification
$M\otimes_{\Z}\Z_p\simeq M[p^\infty]$
gives \eqref{eq:minus-sequence}.

All primes in the moduli are unramified in the tower.
Reduction of a field norm is the product of the
corresponding residue-field norms, and norms preserve
the ray congruences.

The cyclotomic character identifies
$\Gal(K_{n+1}/K_n)$ with
\[
\{1+t\,p^{n+1}\bmod p^{n+2}:0\leq t<p\}.
\]
Consequently, for $\zeta\in\mu_{p^{n+2}}$,
\[
N_{K_{n+1}/K_n}(\zeta)
=\prod_{t=0}^{p-1}\zeta^{1+t\,p^{n+1}}
=\zeta^{\,p+p^{n+2}(p-1)/2}
=\zeta^p.
\]
Here $(p-1)/2$ is an integer because $p$ is odd.
This proves norm compatibility.
\end{proof}

\begin{remark}
Tensoring before applying the minus idempotent is
important.
Taking $p$-power torsion directly in a sequence
containing the infinite unit group is not an exact
operation.
\end{remark}

\begin{lemma}\label{lem:relative}
The number $h_{n,\Sigma}^-$ is a positive integer.
Moreover,
\[
v_p(h_{n,\Sigma}^-)
=\length_{\Z_p}A_{n,\Sigma}^-,
\qquad
v_p(h_n^-)=\length_{\Z_p}A_n^-.
\]
\end{lemma}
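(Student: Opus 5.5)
The plan is to relate the ray class groups of $K_n$ and of $K_n^+$ through the $j$-action, working with $p$-primary parts throughout. Since $p$ is odd, $2$ is a unit in $\Z_p$. For every finite $\Z_p[\langle j\rangle]$-module $M$ we then have $M=M^+\oplus M^-$, hence
\[
\length_{\Z_p}M=\length_{\Z_p}M^++\length_{\Z_p}M^-.
\]
It therefore suffices to prove the following. The natural map
\[
\iota:\Cl_{\mm_{n,\Sigma}^+}(K_n^+)[p^\infty]\to A_{n,\Sigma}^+,
\]
induced by extension of ideals, is an isomorphism. The analogous statement for ordinary class groups, with $A_n^+$, should also hold. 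Given these, $v_p(h_{n,\Sigma})-v_p(h_{n,\Sigma}^+)=\length A_{n,\Sigma}^-$, and likewise for $h_n^-$.

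First I check that extension of ideals is well defined. Since $\mm_{n,\Sigma}^+\OO_{K_n}=\mm_{n,\Sigma}$ (the primes above $\Sigma$ are unramified), the map
\[
I\mapsto I\OO_{K_n}
\]
sends ray classes modulo $\mm_{n,\Sigma}^+$ to ray classes modulo $\mm_{n,\Sigma}$. Its image is $j$-invariant, so after passing to $p$-parts it lands in the plus part. Next comes the norm map
\[
N=N_{K_n/K_n^+}:\Cl_{\mm_{n,\Sigma}}(K_n)\to\Cl_{\mm_{n,\Sigma}^+}(K_n^+).
\]
It satisfies $N\circ\iota=2$ and $\iota\circ N=1+j$. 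On $p$-parts, $2$ is invertible and $1+j$ acts as $2$ on the plus part. Hence $\iota$ is injective, and it is surjective onto $A_{n,\Sigma}^+$: any $x$ in the plus part equals $\iota(N(x)/2)$. This gives the isomorphism, and the same argument with the modulus removed handles $A_n^+$. I expect this step to be the only place that needs care, namely verifying $N\circ\iota=2$ on ray classes. This holds because the extension of an ideal of $K_n^+$ has norm equal to its square. The identity $\iota\circ N=1+j$ holds because $K_n/K_n^+$ is Galois, so $N(\mathfrak a)\OO_{K_n}=\mathfrak a\,j(\mathfrak a)$. Both identities are exact ideal equalities, so they respect the ray congruences.

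It remains to see that $h_{n,\Sigma}^-$ is an integer. This requires $h_{n,\Sigma}^+\mid h_{n,\Sigma}$, and integrality at $p$ already follows from the valuation formula. For the full statement I would use class field theory. $K_n/K_n^+$ is ramified at the unique prime above $p$, which is totally ramified in the tower, and the primes above $p$ are prime to the modulus. So the ray class field of $K_n^+$ modulo $\mm_{n,\Sigma}^+$ is linearly disjoint from $K_n$. Consequently the norm map on ray class groups is surjective: its cokernel is $\Gal(L\cap K_n/K_n^+)$, where $L$ is that ray class field, and this group is trivial. Therefore $h_{n,\Sigma}^+$ divides $h_{n,\Sigma}$. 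The same argument gives the ordinary case $h_n^-$, which is classical. Since both quantities are positive, $h_{n,\Sigma}^-$ is a positive integer.
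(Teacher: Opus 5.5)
Your proposal is correct and follows essentially the paper's proof. You use the identities $N\iota=2$ and $\iota N=1+j$ to identify the $p$-primary ray class group of $K_n^+$ with $A_{n,\Sigma}^+$, and you use the fact that the ray class field of $K_n^+$ is unramified at $p$ while $K_n/K_n^+$ is ramified there, which makes the norm surjective and gives $h_{n,\Sigma}^+\mid h_{n,\Sigma}$.
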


\begin{proof}
Let $F=K_n^+$, $K=K_n$, and let $R/F$ be the
ray class field for $\mm_{n,\Sigma}^+$.
The extension $R/F$ is unramified at $p$, whereas
$K/F$ is ramified there.
Since $[K:F]=2$, this gives $R\cap K=F$.
The extension $RK/K$ has conductor dividing
$\mm_{n,\Sigma}$.
Restriction and Artin reciprocity therefore show that
the norm map from the ray class group of $K$ to that
of $F$ is surjective.
Hence $h_{n,\Sigma}^+\mid h_{n,\Sigma}$.

For the $p$-primary assertion, extension of ideals
$i$ and norm $N$ satisfy
\[
Ni=2,\qquad iN=1+j.
\]
Since $p$ is odd, extension identifies the
$p$-primary ray class group of $F$ with
$A_{n,\Sigma}^+$.
Taking orders in
\[
A_{n,\Sigma}=A_{n,\Sigma}^+\oplus A_{n,\Sigma}^-
\]
proves the first valuation formula.
The ordinary formula follows in the same way.
\end{proof}

\section{Finite local modules and character components}
\label{sec:finite}

We compute the local term of
\eqref{eq:minus-sequence} as a $\Z_p[\Delta]$-module
and then remove the diagonal roots of unity.
The computation uses only the decomposition of the
auxiliary primes.

For $\ell\ne p$, let
\[
f_{n,\ell}=\ord_{p^{n+1}}(\ell).
\]

\begin{lemma}\label{lem:orders}
For every $n\geq0$,
\[
f_{n,\ell}
=e_\ell p^{\max(0,n+1-a_\ell)},
\qquad
v_p(\ell^{f_{n,\ell}}-1)=m_{n,\ell}.
\]
\end{lemma}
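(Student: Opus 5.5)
The plan is the standard lifting-the-exponent argument for the odd prime $p$, organized around the structure of the cyclic group $(\Z/p^{n+1}\Z)^\times$. Since $p$ is odd, this group is cyclic of order $(p-1)p^n$. Its $p$-Sylow subgroup is $(1+p\Z)/(1+p^{n+1}\Z)$, and the reduction map to $(\Z/p\Z)^\times$ has this subgroup as kernel. Therefore $f_{n,\ell}$ is divisible by $e_\ell=\ord_p(\ell)$, and $f_{n,\ell}/e_\ell$ equals the order of $x:=\ell^{e_\ell}$ in $(1+p\Z)/(1+p^{n+1}\Z)$. By the definition \eqref{eq:parameters}, we have $v_p(x-1)=a_\ell\geq1$.

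The key input is the lifting-the-exponent lemma. For $x\equiv1\pmod p$ and $p$ odd, we have $v_p(x^p-1)=v_p(x-1)+1$, and $v_p(x^m-1)=v_p(x-1)$ whenever $p\nmid m$. I would prove this directly. Write $x=1+y$ with $v_p(y)=a\geq1$. Then $x^p-1=py+\binom p2y^2+\cdots+y^p$. The term $py$ has valuation $a+1$. The middle binomial terms have valuation at least $1+2a>a+1$. The last term has valuation $pa\geq 3a>a+1$, and this step uses $p\geq3$. The case $p\nmid m$ follows from $x^m-1=(x-1)(1+x+\cdots+x^{m-1})$, since the second factor is congruent to $m\not\equiv0\pmod p$.

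Iterating gives $v_p(x^{p^k}-1)=a_\ell+k$. Since the order of $x$ is a power of $p$, the smallest $k$ with $a_\ell+k\geq n+1$ is $k=\max(0,n+1-a_\ell)$, and so $f_{n,\ell}=e_\ell p^{\max(0,n+1-a_\ell)}$. Finally,
\[
v_p(\ell^{f_{n,\ell}}-1)=a_\ell+\max(0,n+1-a_\ell)=\max(a_\ell,n+1)=m_{n,\ell}.
\]

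The only point needing care is the LTE step, specifically the use of oddness of $p$ to control the $y^p$ term. This causes no difficulty: the exceptional case $p=2$, $a=1$ is excluded here. Everything else is routine bookkeeping with the cyclic group structure.
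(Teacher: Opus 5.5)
Your proof is correct and follows the same route as the paper: $e_\ell$ divides $f_{n,\ell}$, and lifting the exponent, $v_p(\ell^{e_\ell p^k}-1)=a_\ell+k$, picks out the least admissible $p$-power $p^{\max(0,n+1-a_\ell)}$. You also prove the LTE step and use the Sylow structure to reduce to $p$-power exponents, whereas the paper cites $v_p(\ell^{e_\ell t}-1)=a_\ell+v_p(t)$ for all $t\geq1$ directly.
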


\begin{proof}
The lifting-the-exponent formula gives
\[
v_p(\ell^{e_\ell t}-1)=a_\ell+v_p(t)
\]
for every positive integer $t$.
Any exponent giving $1$ modulo $p^{n+1}$
is divisible by $e_\ell$.
The least required additional $p$-power is
$p^{\max(0,n+1-a_\ell)}$.
\end{proof}

Put $H_\ell=\langle\delta_\ell\rangle\subseteq\Delta$.
If $\chi:H_\ell\to\Z_p^\times$ and $M$ is a
cyclic $\Z_p$-module, write $M\langle\chi\rangle$
for $M$ equipped with the action $h\cdot x=\chi(h)x$.
This notation specifies an action; it does not denote
a character component.
For an $H_\ell$-module $N$, we use
\[
\Ind_{H_\ell}^{\Delta}N
=\Z_p[\Delta]\otimes_{\Z_p[H_\ell]}N.
\]

\begin{proposition}\label{prop:finite-induced}
As a $\Z_p[\Delta]$-module,
\[
B_{n,\ell}
\simeq
\left(
\Ind_{H_\ell}^{\Delta}
\bigl((\Z/p^{m_{n,\ell}}\Z)
\langle\omega|_{H_\ell}\rangle\bigr)
\right)^{q_{n,\ell}}.
\]
Consequently,
\begin{equation}\label{eq:finite-local-character}
B_{n,\ell}(\theta)
\simeq
\begin{cases}
(\Z/p^{m_{n,\ell}}\Z)^{q_{n,\ell}},
&\epsilon_\ell(\theta)=1,\\
0,&\epsilon_\ell(\theta)=0.
\end{cases}
\end{equation}
\end{proposition}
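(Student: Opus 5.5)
We prove the induced-module description by fixing one prime $\mathfrak l$ of $K_n$ above $\ell$ and analysing its decomposition group in $G_n=\Gal(K_n/\Q)\simeq(\Z/p^{n+1}\Z)^\times$. Since $\ell$ is unramified, the decomposition group is the cyclic subgroup $\langle\sigma_\ell\rangle$ generated by the Frobenius $\sigma_\ell=\ell\bmod p^{n+1}$, of order $f_{n,\ell}$. The primes above $\ell$ are permuted transitively by $G_n$, so
\[
B_{n,\ell}\simeq\Z_p[G_n]\otimes_{\Z_p[\langle\sigma_\ell\rangle]}k_{\mathfrak l}^\times[p^\infty].
\]
The residue field $k_{\mathfrak l}$ has $\ell^{f_{n,\ell}}$ elements, so $k_{\mathfrak l}^\times[p^\infty]$ is cyclic of order $p^{v_p(\ell^{f_{n,\ell}}-1)}=p^{m_{n,\ell}}$ by Lemma~\ref{lem:orders}. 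On this group $\sigma_\ell$ acts by $x\mapsto x^\ell$, that is, by multiplication by $\ell$ in additive notation.

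Next we restrict from $G_n=\Delta\times\Gamma/\Gamma^{p^n}$ to $\Delta$. Write $\sigma_\ell=\delta_\ell\gamma_\ell$ with $\gamma_\ell$ in the $p$-part. The image of $\langle\sigma_\ell\rangle$ in $\Delta$ is $H_\ell$, of order $e_\ell$. Its $p$-part $\langle\gamma_\ell\rangle$ has order $f_{n,\ell}/e_\ell=p^{\max(0,n+1-a_\ell)}$, so its index in $\Gamma/\Gamma^{p^n}$ is $p^n/p^{\max(0,n+1-a_\ell)}=p^{\min(n,a_\ell-1)}=q_{n,\ell}$. Since $p\nmid e_\ell$, the decomposition group is the direct product of its prime-to-$p$ and $p$-parts, namely $\langle\sigma_\ell^{p^N}\rangle\times\langle\sigma_\ell^{e_\ell}\rangle$ for suitable $N$. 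The first factor maps isomorphically onto $H_\ell\subseteq\Delta$. Choosing coset representatives of $\Gamma/\Gamma^{p^n}$ modulo $\langle\gamma_\ell\rangle$ gives $q_{n,\ell}$ orbits of the primes above $\ell$ under $\Delta$. Each orbit is permuted by $\Delta$ with stabilizer $H_\ell$, acting through the prime-to-$p$ part of Frobenius.

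The key step, and the one requiring care, is identifying the action of $H_\ell$ on the residue group with $\omega|_{H_\ell}$. The element of the decomposition group lying over $\delta_\ell$ is $\tau=\sigma_\ell^{c}$, where $c\equiv 1\pmod{e_\ell}$ and $c$ is a power of $p$ times something making its $p$-part trivial. It acts on $k_{\mathfrak l}^\times[p^\infty]$ by $x\mapsto x^{\ell^c}$. Now $\ell^c$ is congruent modulo $p^{m_{n,\ell}}$ to the Teichmüller-type root of unity lifting $\ell\bmod p$, because the $p$-part of $\ell$ has been killed. This works because $\ell^{f_{n,\ell}}\equiv1$ modulo $p^{m_{n,\ell}}$, so exponents of $\ell$ are read modulo $f_{n,\ell}$, and $\tau$ corresponds to the element of order $e_\ell$. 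Hence $\tau$ acts by $\omega(\ell)$, concretely as $\omega(\delta_\ell)\bmod p^{m_{n,\ell}}$.

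Therefore each $\Delta$-orbit contributes the summand
\[
\Ind_{H_\ell}^{\Delta}\bigl((\Z/p^{m_{n,\ell}}\Z)\langle\omega|_{H_\ell}\rangle\bigr),
\]
and there are $q_{n,\ell}$ such summands, which gives the stated decomposition. For the character components, Frobenius reciprocity gives
\[
e_\theta\Ind_{H_\ell}^\Delta N\simeq e_{\theta|_{H_\ell}}N,
\]
since the idempotents have unit coefficients and $p\nmid|\Delta|$. This component is $N$ when $\theta(\delta_\ell)=\omega(\ell)$ and zero otherwise, because $H_\ell$ is generated by $\delta_\ell$. This yields \eqref{eq:finite-local-character}.
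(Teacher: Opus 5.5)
Your proof is correct and follows the same route as the paper. You count the $\Delta$-orbits on the primes above $\ell$ through the decomposition group $H_\ell\times\langle\gamma_\ell\rangle$, identify the $H_\ell$-action on each cyclic residue group of order $p^{m_{n,\ell}}$ with $\omega|_{H_\ell}$, and read off the character components from the induced modules. The only variation is in the middle step, which you handle by writing $\delta_\ell=\sigma_\ell^{c}$, with $c\equiv1\pmod{e_\ell}$ and $c$ divisible by the order $f_{n,\ell}/e_\ell$ of $\gamma_\ell$, and checking $\ell^{c}\equiv\omega(\ell)\pmod{p^{m_{n,\ell}}}$ directly. The paper instead reads the action off the injected $\mu_p$ and invokes uniqueness of the prime-to-$p$ lift, which is the same underlying fact.
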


\begin{proof}
The decomposition group of a prime above $\ell$
has order $f_{n,\ell}$.
Its prime-to-$p$ subgroup is $H_\ell$, of order
$e_\ell$, so its intersection with $\Delta$ is
$H_\ell$.
The number of $\Delta$-orbits on primes above $\ell$ is
\[
\frac{(p-1)p^n/f_{n,\ell}}{(p-1)/e_\ell}
=p^{\min(n,a_\ell-1)}
=q_{n,\ell}.
\]

Each residue-field $p$-group is cyclic of order
$p^{m_{n,\ell}}$.
The action of $H_\ell$ has order prime to $p$.
Its action modulo $p$ is determined by the injected
group $\mu_p$ and is cyclotomic.
The prime-to-$p$ lift of this action is the
Teichm\"uller action $\omega|_{H_\ell}$.
Each orbit therefore gives one copy of the displayed
induced module.

Applying $e_\theta$ gives a nonzero component exactly
when $\theta|_{H_\ell}=\omega|_{H_\ell}$.
When the restrictions agree, that component is one
cyclic group for each orbit.
When they differ, a difference of character values is
a $p$-adic unit, so the component vanishes.
\end{proof}

\begin{proposition}[Elementary divisors]
\label{prop:finite-elementary}
Choose $\ell_0\in\Sigma$ with $a_{\ell_0}$ minimal.
Then \eqref{eq:finite-exceptional} holds.
For a singleton modulus, with
$q=q_{n,\ell}$ and $m=m_{n,\ell}$, this becomes
\[
D_{n,\{\ell\}}(\omega)
\simeq
(\Z/p^m\Z)^{q-1}\oplus\Z/p^{m-n-1}\Z.
\]
Moreover,
\[
D_{n,\{\ell\}}
\simeq
(\Z/p^m\Z)^{b_\ell q-1}
\oplus\Z/p^{m-n-1}\Z
\]
as a $\Z_p$-module.
\end{proposition}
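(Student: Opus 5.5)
From Proposition~\ref{prop:minus-sequence}, $D_{n,\Sigma}\simeq B_{n,\Sigma}^-/\mu_{p^{n+1}}$, where $\mu_{p^{n+1}}$ sits inside by simultaneous reduction. The plan is to apply the exact idempotent $e_\omega$ and identify the image of the diagonal copy of $\mu_{p^{n+1}}$ inside $B_{n,\Sigma}(\omega)=\bigoplus_{\ell\in\Sigma}B_{n,\ell}(\omega)$; its quotient is then computed by a change of basis. Since $\Delta$ acts on $\mu_{p^{n+1}}$ through $\omega$, the whole diagonal lies in the $\omega$-component. Hence $D_{n,\Sigma}(\omega)=B_{n,\Sigma}(\omega)/\mu_{p^{n+1}}$. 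By Proposition~\ref{prop:finite-induced} (every $\ell$ is matching for $\omega$), this quotient is $\bigoplus_\ell(\Z/p^{m_{n,\ell}}\Z)^{q_{n,\ell}}$.

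The key step is to understand the image of a generator $\zeta$ of $\mu_{p^{n+1}}$ in each $B_{n,\ell}(\omega)\simeq(\Z/p^m\Z)^q$, with $m=m_{n,\ell}$ and $q=q_{n,\ell}$. First, in a single residue field $k_{\mathfrak l}$, the $p$-group is cyclic of order $p^m$ and $\mu_{p^{n+1}}$ injects into it. So the reduction of $\zeta$ is $p^{m-n-1}$ times a generator. Second, I pass to the $\omega$-component of the induced module over one $\Delta$-orbit of primes. I would use the explicit description $e_\omega(\bigoplus_{g\in\Delta/H_\ell}g x)$ and check that it gives $p^{m-n-1}$ times a generator of that cyclic block, up to a unit; the idempotent introduces only units since $p\nmid p-1$. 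Hence in each of the $q$ orbit-blocks, $\zeta$ maps to $p^{m-n-1}$ times a basis vector, up to a unit. Altogether, the diagonal is generated by the vector
\[
v=\bigl(p^{m_{n,\ell}-n-1}u_{\ell,i}\,e_{\ell,i}\bigr)_{\ell\in\Sigma,\,1\le i\le q_{n,\ell}},
\]
with units $u_{\ell,i}$.

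Next comes the Smith-form step. After rescaling basis vectors, $v=\sum p^{m_{n,\ell}-n-1}e_{\ell,i}$. The entry with smallest exponent is $p^{m_0-n-1}$, coming from $\ell_0$: the function $a\mapsto\max(a,n+1)$ is monotone, so minimal $a_{\ell_0}$ gives minimal $m_0$. I replace the basis vector $e_{\ell_0,1}$ by $v'=v/p^{m_0-n-1}$, which is integral because all other exponents are at least as large. Then $\{v'\}\cup\{e_{\ell,i}\}\setminus\{e_{\ell_0,1}\}$ is a basis, i.e. a triangular unimodular change of coordinates. The one point to check is that the new element $v'$ still has order $p^{m_0}$, and that the decomposition remains direct. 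Both hold because $v'$ has coefficient $1$ on $e_{\ell_0,1}$ and the other coordinates have orders at most their block orders. Strictly speaking, this is cleaner if I first lift to $\Z_p$-modules: view $B$ as $\bigoplus\Z_p/p^{m}$, write $v'=e_{\ell_0,1}+\sum c\,e_{\ell,i}$, and use $p^{m_0}c\,e_{\ell,i}=0$, which holds since $c$ contains the factor $p^{m_{n,\ell}-m_0}$. Quotienting by $v=p^{m_0-n-1}v'$ then gives \eqref{eq:finite-exceptional}. The singleton case follows with $\Sigma=\{\ell\}$.

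Finally, for the full $\Z_p$-structure of $D_{n,\{\ell\}}$, I sum over all odd $\theta$. Non-exceptional matching $\theta$ contribute $(\Z/p^m\Z)^q$ each, by \eqref{eq:finite-local-character}, since the diagonal lies only in the $\omega$-part. It remains to count the odd $\theta$ with $\theta|_{H_\ell}=\omega|_{H_\ell}$. There are $(p-1)/e_\ell$ characters with this restriction. When $e_\ell$ is even, $j\in H_\ell$ and all of them are odd, giving $b_\ell$. When $e_\ell$ is odd, exactly half are odd, again giving $b_\ell$. This yields $(\Z/p^m\Z)^{b_\ell q-1}\oplus\Z/p^{m-n-1}\Z$. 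The main obstacle I expect is the second step: verifying carefully that the $\omega$-projection of the diagonal root of unity in each orbit block is exactly $p^{m-n-1}$ times a generator, up to a unit.
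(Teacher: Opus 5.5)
Your proposal is correct and follows essentially the paper's argument. You locate the diagonal $\mu_{p^{n+1}}$ inside $B_{n,\Sigma}(\omega)\simeq\bigoplus_{\ell}(\Z/p^{m_{n,\ell}}\Z)^{q_{n,\ell}}$, normalize it to $p^{m_{n,\ell}-n-1}$ times basis vectors, and use minimality of $m_0$ for a triangular change of coordinates that concentrates it in one $\ell_0$-coordinate. You then count the $b_\ell$ matching odd characters, as the paper does.

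The step you flag as the main obstacle needs no idempotent computation. The diagonal is already an $\omega$-eigenvector, and its projection to each block has exact order $p^{n+1}$ because reduction is injective. So it is $p^{m-n-1}$ times a basis vector of that free $\Z/p^{m}\Z$-module, which is exactly how the paper argues.
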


\begin{proof}
Put $r=n+1$, $m_\ell=m_{n,\ell}$, and
$q_\ell=q_{n,\ell}$.
The exceptional part of the negative ray sequence is
\[
0\longrightarrow\mu_{p^r}
\longrightarrow
\bigoplus_{\ell\in\Sigma}
(\Z/p^{m_\ell}\Z)^{q_\ell}
\longrightarrow D_{n,\Sigma}(\omega)
\longrightarrow0.
\]
The projection of a generator of $\mu_{p^r}$ to
each $\ell$-block has order $p^r$, since reduction
at $\ell$ is injective.
After an invertible change of basis in each block,
this projection is $p^{m_\ell-r}$ times its first
basis vector.

Write $m_0=m_{\ell_0}$.
Since $m_0\leq m_\ell$, multiplication by
$p^{m_\ell-m_0}$ defines a homomorphism
\[
\Z/p^{m_0}\Z\longrightarrow\Z/p^{m_\ell}\Z.
\]
Subtracting this multiple of the first coordinate of
the $\ell_0$-block from the first coordinate of each
other block is an automorphism.
It sends the diagonal generator to a vector supported
only in the first coordinate of the $\ell_0$-block.
Quotienting that coordinate replaces
$\Z/p^{m_0}\Z$ by $\Z/p^{m_0-r}\Z$.
This proves \eqref{eq:finite-exceptional}.

For a singleton, the other $b_\ell-1$ nonzero odd
character components remain unchanged, giving the
last formula.
The count of these characters is included below.
\end{proof}

\begin{proof}[Proof of Theorem~\ref{thm:finite}]
The group $\mu_{p^{n+1}}$ has $\Delta$-character
$\omega$.
For odd $\theta\ne\omega$, the negative ray sequence
identifies $D_{n,\Sigma}(\theta)$ with
$B_{n,\Sigma}(\theta)$.
Proposition~\ref{prop:finite-induced} gives
\eqref{eq:finite-nonexceptional}.
For $\theta=\omega$, every local block occurs,
and the roots of unity subtract length $n+1$.
This proves \eqref{eq:finite-character-length}.
The exceptional elementary divisors are given by
Proposition~\ref{prop:finite-elementary}.

There are exactly $b_\ell$ odd characters satisfying
$\epsilon_\ell(\theta)=1$.
Indeed, write $\theta=\omega\eta$.
Then $\eta$ must be trivial on $H_\ell$ and satisfy
$\eta(j)=1$.
If $e_\ell$ is even, then $j\in H_\ell$, and all
$(p-1)/e_\ell$ characters of $\Delta/H_\ell$
qualify.
If $e_\ell$ is odd, exactly half qualify.

Summing over odd characters and using
Lemma~\ref{lem:relative} proves
\eqref{eq:finite-class-number}.
In the stable range,
$q_{n,\ell}=s_\ell$ and $m_{n,\ell}=n+1$, giving
\eqref{eq:stable-class-number}.
\end{proof}

\begin{remark}\label{rem:Finat}
For $\Sigma=\{2\}$, the stable formula is Finat's
comparison.
The formulas above retain the character components
and elementary divisors without a stable-range
restriction.
\end{remark}

\subsection{The bottom layer}

\begin{proof}[Proof of Corollary~\ref{cor:HR}]
Put $\theta=\omega^{1-k}$.
Then $\theta$ is odd and different from $\omega$, and
\[
\theta(\delta_\ell)=\omega(\ell)
\quad\Longleftrightarrow\quad
\ell^k\equiv1\pmod p.
\]
Theorem~\ref{thm:finite} at $n=0$ gives the length
formula and an exact sequence
\[
0\longrightarrow B_{0,\ell}(\theta)
\longrightarrow A_{0,\{\ell\}}(\theta)
\longrightarrow A_0(\theta)
\longrightarrow0.
\]

Herbrand--Ribet \cite{Ribet76,Washington97} gives
\[
A_0(\omega^{1-k})\ne0
\quad\Longleftrightarrow\quad
p\mid B_k.
\]
In the stated range, $B_k$ is $p$-integral.
Thus the ray component is nonzero exactly when
$p\mid B_k$ or $\ell^k\equiv1\pmod p$, which is
equivalent to $p\mid\ell(1-\ell^k)B_k$.
\end{proof}

\begin{proposition}\label{prop:omega}
For every odd prime $p\ne\ell$,
\[
A_{0,\{\ell\}}(\omega)\simeq\Z/p^{a_\ell-1}\Z.
\]
It is nonzero exactly when
\[
\ell^{p-1}\equiv1\pmod{p^2}.
\]
\end{proposition}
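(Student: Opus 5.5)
The plan is to specialize the exact sequence of Proposition~\ref{prop:minus-sequence} to $n=0$, $\Sigma=\{\ell\}$, and then take the $\omega$-component. First we show that the ordinary term vanishes, i.e.\ $A_0(\omega)=0$. This is classical: by Herbrand's theorem (the easy direction of Herbrand--Ribet), $A_0(\omega)\ne0$ would force $p\mid B_1(\omega^{-1})$, the generalized Bernoulli number. But $pB_1(\omega^{-1})\equiv p-1\pmod p$ is a $p$-adic unit, so $B_1(\omega^{-1})$ has valuation $-1$. Hence $A_0(\omega)=0$, and therefore
\[
A_{0,\{\ell\}}(\omega)=D_{0,\{\ell\}}(\omega).
\]

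Next we apply Proposition~\ref{prop:finite-elementary} for the singleton modulus with $n=0$. Here $q=q_{0,\ell}=p^{\min(0,a_\ell-1)}=1$ and $m=m_{0,\ell}=\max(a_\ell,1)=a_\ell$, since $a_\ell\ge1$. The formula there gives
\[
D_{0,\{\ell\}}(\omega)\simeq(\Z/p^{a_\ell}\Z)^{0}\oplus\Z/p^{a_\ell-1}\Z=\Z/p^{a_\ell-1}\Z.
\]
As a sanity check, the $\omega$-component of $B_{0,\ell}$ is cyclic of order $p^{a_\ell}$ by Proposition~\ref{prop:finite-induced}, since $\omega$ is always matching for $\ell$. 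Quotienting this cyclic group by the injected $\mu_p$ leaves a cyclic group of order $p^{a_\ell-1}$.

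For the nonvanishing criterion, the group is nonzero exactly when $a_\ell\ge2$, that is, when $p^2\mid\ell^{e_\ell}-1$. It remains to see that $v_p(\ell^{e_\ell}-1)=v_p(\ell^{p-1}-1)$. This follows from the lifting-the-exponent identity used in Lemma~\ref{lem:orders}: write $p-1=e_\ell t$ with $p\nmid t$, so that $v_p(\ell^{p-1}-1)=a_\ell+v_p(t)=a_\ell$. Thus $a_\ell\ge2$ if and only if $\ell^{p-1}\equiv1\pmod{p^2}$.

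The only step that is not pure bookkeeping is the vanishing $A_0(\omega)=0$. I would cite Herbrand's theorem, as in \cite{Washington97}, rather than reprove it. An alternative route is the reflection/Stickelberger argument, whose $\omega$-eigencomponent of the Stickelberger element is a unit.
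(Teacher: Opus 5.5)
Your proposal is correct and is essentially the paper's own proof. Both use the classical vanishing $A_0(\omega)=0$, then Proposition~\ref{prop:finite-elementary} at $n=0$ with $q=1$ and $m=a_\ell$, and then lifting the exponent with $p-1=e_\ell t$, $p\nmid t$. One small precision: Herbrand's theorem as usually stated covers $\omega^i$ only for $3\le i\le p-2$. The case $i=1$ is really the Stickelberger computation you mention at the end: the annihilator $(c-\omega(c))B_{1,\omega^{-1}}$ with $c=1+p$ equals the $p$-adic unit $pB_{1,\omega^{-1}}$, since $\omega(1+p)=1$.
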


\begin{proof}
The classical vanishing $A_0(\omega)=0$ is recalled
in \cite{Washington97,NT3}.
Apply Proposition~\ref{prop:finite-elementary} with
$n=0$.
Since $e_\ell\mid p-1$ and
$p\nmid(p-1)/e_\ell$, lifting the exponent gives
\[
v_p(\ell^{p-1}-1)=a_\ell.
\]
\end{proof}

\begin{remark}
Corollary~\ref{cor:HR} detects the existence of
a class of order $p$ in the specified component.
It does not assert cyclicity of that component,
nor equality of its length with $v_p(G_k^{(\ell)})$.
The exact special-value length formula is given in
Corollary~\ref{cor:bottom-L}.
\end{remark}

\subsection{Examples}

\begin{example}\label{ex:17}
Let $p=17$ and $\ell=2$.
Then
\[
e_2=8,\qquad a_2=1,\qquad b_2=2.
\]
The nonzero odd local components at the bottom layer
have characters $\omega$ and $\omega^9$, and each
component has order $17$.
Since $17$ is Bernoulli regular, $A_0^-=0$.
The roots of unity remove the $\omega$-component, so
\[
A_{0,\{2\}}^-=A_{0,\{2\}}(\omega^9)
\simeq\Z/17\Z.
\]
This agrees with
\[
G_8^{(2)}=2(1-2^8)B_8=17.
\]
\end{example}

\begin{example}\label{ex:5-7}
Let $p=5$ and $\ell=7$.
Since
\[
e_7=4,\qquad 7^4-1=2400,\qquad a_7=2,
\]
we have $b_7=1$ and $s_7=5$.
Only $\omega$ contributes to the local minus part.
Hence
\[
D_{0,\{7\}}\simeq\Z/5\Z,
\qquad
D_{n,\{7\}}\simeq(\Z/5^{n+1}\Z)^4
\quad(n\geq1).
\]
The bottom-layer correction has length $1$,
whereas the stable slope is $d_7-1=4$.
\end{example}

\begin{example}[Two auxiliary primes]\label{ex:multi}
Let $p=5$ and $\Sigma=\{2,7\}$.
For $2$, one has $a_2=b_2=s_2=1$, whereas
$a_7=2$, $b_7=1$, and $s_7=5$.
The only contributing odd character is $\omega$.
Theorem~\ref{thm:finite} gives
\[
D_{0,\{2,7\}}\simeq\Z/25\Z,
\qquad
D_{n,\{2,7\}}\simeq(\Z/5^{n+1}\Z)^5
\quad(n\geq1).
\]
In particular, the bottom-layer group is not the direct
sum of $D_{0,\{2\}}=0$ and $D_{0,\{7\}}\simeq\Z/5\Z$.
There is only one diagonal group of global roots of unity.

At the inverse-limit level,
Theorem~\ref{thm:correction} gives
\[
C_{\{2,7\}}\simeq\Lambda/(R_7),
\]
whose rank over $\Z_5$ is $5$.
\end{example}

\begin{example}[Ordinary and local contributions]
\label{ex:37}
Take $p=37$, $k=32$, and $\ell=31$.
The classical irregular pair $(37,32)$ gives
\[
A_0(\omega^5)\ne0,
\qquad
\omega^{1-32}=\omega^5.
\]
Since $31^2\equiv-1\pmod{37}$, we have $e_{31}=4$.
Also
\[
31^4-1=923520=37\cdot24960,
\qquad
37\nmid24960,
\]
so $a_{31}=1$.
Consequently,
\[
0\longrightarrow\Z/37\Z
\longrightarrow A_{0,\{31\}}(\omega^5)
\longrightarrow A_0(\omega^5)
\longrightarrow0.
\]
The same character receives both an ordinary
class-group contribution and a local contribution.
The displayed sequence does not determine its extension
class.
\end{example}

\section{The local inverse limit, the Tate quotient, and control}
\label{sec:inverse}

We first compute the norm inverse limit of the local
terms and its quotient by the diagonal Tate module.
We then prove local descent and the global control
statements needed to recover the finite ray class groups.

\subsection{Norm limits and local presentations}

\begin{proposition}\label{prop:inverse-sequence}
Taking norm inverse limits in \eqref{eq:minus-sequence}
gives an exact sequence
\begin{equation}\label{eq:inverse-sequence}
0\longrightarrow\Z_p(1)
\longrightarrow Y_\Sigma^-
\longrightarrow X_\Sigma^-
\longrightarrow X^-
\longrightarrow0.
\end{equation}
There is also a natural identification
\[
C_\Sigma\simeq\varprojlim_n D_{n,\Sigma},
\]
where the transition maps are induced by norms.
\end{proposition}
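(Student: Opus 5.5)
The plan is to split the four-term sequence \eqref{eq:minus-sequence} into two short exact sequences and pass each to the limit, using that every term is finite. At layer $n$ write
\[
0\longrightarrow\mu_{p^{n+1}}\longrightarrow B_{n,\Sigma}^-\longrightarrow D_{n,\Sigma}\longrightarrow0,
\qquad
0\longrightarrow D_{n,\Sigma}\longrightarrow A_{n,\Sigma}^-\longrightarrow A_n^-\longrightarrow0.
\]
Proposition~\ref{prop:minus-sequence} says that the maps in \eqref{eq:minus-sequence} commute with norms. Hence both short sequences form inverse systems of short exact sequences of finite groups, with the norm maps as transition maps on $D_{n,\Sigma}$.

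Inverse systems of finite groups satisfy the Mittag-Leffler condition, so $\varprojlim^1$ vanishes on each of them and $\varprojlim$ is exact on both systems. For the first system this gives
\[
0\to\varprojlim_n\mu_{p^{n+1}}\to\varprojlim_n B_{n,\Sigma}^-\to\varprojlim_n D_{n,\Sigma}\to0 .
\]
For the second it gives
\[
0\to\varprojlim_n D_{n,\Sigma}\to X_\Sigma^-\to X^-\to0 .
\]
Splicing these two sequences produces \eqref{eq:inverse-sequence}, once the first two terms are identified.

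Two identifications remain.

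\emph{The roots of unity.} The last paragraph of the proof of Proposition~\ref{prop:minus-sequence} shows that the norm $\mu_{p^{n+2}}\to\mu_{p^{n+1}}$ is $\zeta\mapsto\zeta^p$. So $\varprojlim_n\mu_{p^{n+1}}$ with norm transitions is exactly $\Z_p(1)$ as defined, with its natural Galois action.

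\emph{The local term.} The minus idempotent $(1-j)/2$ commutes with the norms, because norms are Galois-equivariant and $j$ is central in the abelian group $G$. Since $\varprojlim$ commutes with finite direct sums and with applying an idempotent, this gives $\varprojlim_n B_{n,\Sigma}^-=Y_\Sigma^-$.

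It remains to check that the limit of the first maps is the simultaneous-reduction embedding $\Z_p(1)\hookrightarrow Y_\Sigma^-$ used to define $C_\Sigma$. This holds because the map is reduction at every layer. With that, the first limit sequence reads $0\to\Z_p(1)\to Y_\Sigma^-\to\varprojlim_n D_{n,\Sigma}\to0$, which is the claimed identification $C_\Sigma=Y_\Sigma^-/\Z_p(1)\simeq\varprojlim_n D_{n,\Sigma}$.

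The only step needing care is the matching of transition maps. I must check that the norm on $B_{n,\Sigma}^-$, taken as the product of residue-field norms over the primes above a given prime, restricts on reduced roots of unity to the field norm on $\mu_{p^{n+1}}$. This is the compatibility already noted in Proposition~\ref{prop:minus-sequence}: reduction of a field norm is the product of the residue-field norms, and all primes in $\Sigma$ are unramified in the tower. Exactness of the limits needs no hypothesis beyond the finiteness of all terms.
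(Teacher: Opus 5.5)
Your proposal is correct and follows essentially the same route as the paper. It splits \eqref{eq:minus-sequence} into the two short exact sequences through $D_{n,\Sigma}$, uses Mittag--Leffler for inverse systems of finite groups to get exact limits, identifies $\varprojlim_n\mu_{p^{n+1}}$ with $\Z_p(1)$ via $\zeta\mapsto\zeta^p$, and splices; your extra checks (the minus idempotent commutes with the limit, and the limiting first map is simultaneous reduction) are details the paper leaves implicit.
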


\begin{proof}
Split the finite sequence into
\[
0\longrightarrow\mu_{p^{n+1}}
\longrightarrow B_{n,\Sigma}^-
\longrightarrow D_{n,\Sigma}\longrightarrow0
\]
and
\[
0\longrightarrow D_{n,\Sigma}
\longrightarrow A_{n,\Sigma}^-
\longrightarrow A_n^-\longrightarrow0.
\]
The inverse systems consist of finite groups and
therefore satisfy the Mittag--Leffler condition.
Their inverse limits are exact; see
\cite[Section 3.5]{Weibel94}.
The norm inverse limit of the roots of unity is
$\Z_p(1)$.
The first sequence consequently identifies
$Y_\Sigma^-/\Z_p(1)=C_\Sigma$ with
$\varprojlim_n D_{n,\Sigma}$.
Splicing the resulting sequences proves
\eqref{eq:inverse-sequence}.
\end{proof}

Let $\sigma_\ell\in G$ be arithmetic Frobenius,
normalized by $\kappa(\sigma_\ell)=\ell$, and let
\[
\mathcal D_\ell=\overline{\langle\sigma_\ell\rangle}.
\]
Recall that
\[
\sigma_\ell=\delta_\ell\gamma^{z_\ell},
\qquad
z_\ell=\frac{\log\langle\ell\rangle}{\log u}.
\]

\begin{lemma}\label{lem:decomposition}
One has
\[
v_p(z_\ell)=a_\ell-1,
\qquad
\mathcal D_\ell=H_\ell\times\Gamma^{s_\ell}.
\]
The number of primes above $\ell$ in $K_\infty^+$
is $d_\ell$.
\end{lemma}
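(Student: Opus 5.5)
To prove Lemma~\ref{lem:decomposition}, I would first compute $v_p(z_\ell)$ directly from the definition $u^{z_\ell}=\langle\ell\rangle$. Since $u$ topologically generates $1+p\Z_p$ and $v_p(u-1)=1$, the $p$-adic logarithm gives $v_p(\log u)=1$. So $v_p(z_\ell)=v_p(\log\langle\ell\rangle)-1=v_p(\langle\ell\rangle-1)-1$.

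Next I would relate $v_p(\langle\ell\rangle-1)$ to $a_\ell$. Raise $\langle\ell\rangle=\ell/\omega(\ell)$ to the power $e_\ell$. The character $\omega$ is faithful on $\Delta$ and $\delta_\ell$ has order $e_\ell$, so $\omega(\ell)^{e_\ell}=1$. Hence $\langle\ell\rangle^{e_\ell}=\ell^{e_\ell}$. Because $e_\ell$ is prime to $p$, raising to the $e_\ell$-th power preserves $v_p(x-1)$ on $1+p\Z_p$. Therefore $v_p(\langle\ell\rangle-1)=v_p(\ell^{e_\ell}-1)=a_\ell$, which gives $v_p(z_\ell)=a_\ell-1$.

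For the decomposition group, $\mathcal D_\ell$ is the closure of $\langle\sigma_\ell\rangle$ in $\Delta\times\Gamma$, and $\sigma_\ell=\delta_\ell\gamma^{z_\ell}$. The powers $\sigma_\ell^t$ are $(\delta_\ell^t,\gamma^{tz_\ell})$, and $\delta_\ell$ has prime-to-$p$ order $e_\ell$. By the Chinese remainder theorem in $\Z/e_\ell\times\Z_p$, the closure of $\Z\cdot(1,z_\ell)$ is the full product $\Z/e_\ell\Z\times z_\ell\Z_p$. Here $\Z$ is dense in $\widehat{\Z}$-type completions: choose $t\equiv0\pmod{e_\ell}$ converging $p$-adically to any target, and likewise $t\equiv1\pmod{e_\ell}$ with $t\to0$ $p$-adically. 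It follows that $\mathcal D_\ell=H_\ell\times\overline{\langle\gamma^{z_\ell}\rangle}$. Since $z_\ell\Z_p=p^{a_\ell-1}\Z_p$, the second factor is $\Gamma^{s_\ell}$.

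Finally I would count primes. Since $\ell$ is unramified in $K_\infty/\Q$, the primes above $\ell$ in $K_\infty$ correspond to $G/\mathcal D_\ell$. This quotient has order $\bigl((p-1)/e_\ell\bigr)s_\ell$. In $K_\infty^+$ the count is $[G:\mathcal D_\ell\langle j\rangle]$. Since $j$ is prime-to-$p$, it lies in $\mathcal D_\ell$ exactly when $j\in H_\ell$, which happens exactly when $e_\ell$ is even: $H_\ell$ is cyclic of order $e_\ell$ inside the cyclic group $\Delta$, and $j$ is its unique element of order $2$. So the count equals $b_\ell s_\ell=d_\ell$, by the case definition \eqref{eq:b-d}.

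The only delicate step is the closure argument in the third paragraph, which is where the product structure must be justified carefully; the remaining steps are routine valuation bookkeeping.
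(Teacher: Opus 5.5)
Your proposal is correct and follows the paper's proof essentially step for step. It uses the same valuation computation via $v_p(\log u)=1$ and $\langle\ell\rangle^{e_\ell}=\ell^{e_\ell}$, the same splitting of the closed cyclic subgroup into its prime-to-$p$ and pro-$p$ parts, and the same index count $[G:\langle\mathcal D_\ell,j\rangle]=b_\ell s_\ell$; the only difference is that you make explicit, via the Chinese remainder theorem, the density argument the paper states in one line.
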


\begin{proof}
Since $u$ generates $1+p\Z_p$,
$v_p(\log u)=1$.
Also, $e_\ell$ is prime to $p$ and
$\langle\ell\rangle^{e_\ell}=\ell^{e_\ell}$, so
\[
v_p(\log\langle\ell\rangle)
=v_p(\langle\ell\rangle-1)=a_\ell.
\]
Hence $v_p(z_\ell)=a_\ell-1$.
The prime-to-$p$ and pro-$p$ factors separate in
the closed cyclic subgroup, giving the displayed
decomposition of $\mathcal D_\ell$.

Primes of $K_\infty^+$ above $\ell$ are indexed by
\[
G/\langle\mathcal D_\ell,j\rangle.
\]
Their number is
\[
\frac{(p-1)s_\ell}{|\langle H_\ell,j\rangle|}
=b_\ell s_\ell=d_\ell.
\]
\end{proof}

\begin{proposition}[Local presentation]
\label{prop:local-presentation}
There is an isomorphism of $G$-modules
\begin{equation}\label{eq:permutation-model}
Y_\ell
\simeq
\Z_p[G/\mathcal D_\ell]\otimes_{\Z_p}\Z_p(1)
\end{equation}
with diagonal action.
If $e_c$ denotes the standard basis vector indexed by
$c\in G/\mathcal D_\ell$, the global Tate embedding
is represented by
\begin{equation}\label{eq:diagonal}
z\longmapsto
\sum_{c\in G/\mathcal D_\ell}e_c\otimes z.
\end{equation}
For every character $\theta$,
\begin{equation}\label{eq:local-Lambda}
Y_\ell(\theta)
\simeq
\begin{cases}
\Lambda/(R_\ell),&\epsilon_\ell(\theta)=1,\\
0,&\epsilon_\ell(\theta)=0.
\end{cases}
\end{equation}
\end{proposition}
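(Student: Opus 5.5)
We will build the isomorphism \eqref{eq:permutation-model} from stable residue fields and then read off its character components. Fix a prime $\lambda$ of $K_\infty$ above $\ell$, and let $\lambda_n$ be the prime of $K_n$ below it. Its decomposition group in $G$ is $\mathcal D_\ell$ by Lemma~\ref{lem:decomposition}. The residue field $k_{\lambda_n}$ is $\mathbb F_{\ell^{f_{n,\ell}}}$, and its $p$-primary multiplicative group is cyclic of order $p^{m_{n,\ell}}$ by Lemma~\ref{lem:orders}. The first task is to identify $\varprojlim_n k_{\lambda_n}^\times[p^\infty]$, with residue-field norms as transition maps, with $\Z_p(1)$ as a $\mathcal D_\ell$-module.

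Reduction gives injections $\mu_{p^{n+1}}\hookrightarrow k_{\lambda_n}^\times[p^\infty]$, as in the proof of Proposition~\ref{prop:minus-sequence}. For $n+1\ge a_\ell$ these are isomorphisms, since $m_{n,\ell}=n+1$. In this stable range, the residue-field norm $k_{\lambda_{n+1}}\to k_{\lambda_n}$ has degree $p$ and raises elements to the power $1+\ell^{f}+\cdots+\ell^{f(p-1)}$ with $f=f_{n,\ell}$. On $p$-power torsion this power agrees with $p$ up to a unit congruent to $1$ modulo a high power of $p$. Combined with the computation $N(\zeta)=\zeta^p$ already used for roots of unity, the reductions commute with norms. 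The limit is therefore $\Z_p(1)$, and the arithmetic Frobenius $\sigma_\ell$ acts on it by $\zeta\mapsto\zeta^\ell=\kappa(\sigma_\ell)\zeta$. Since the early layers do not affect the limit, this gives $\varprojlim_n k_{\lambda_n}^\times[p^\infty]\simeq\Z_p(1)$ as $\mathcal D_\ell$-modules.

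Next we pass to all primes above $\ell$. Over each layer the set of such primes is $G/\mathcal D_\ell G_n$, where $G_n=\Gal(K_\infty/K_n)$, and these sets stabilize to $G/\mathcal D_\ell$ through the norm maps. Hence $Y_\ell=\prod_n$-limit of induced modules, which is $\Ind_{\mathcal D_\ell}^G\Z_p(1)$. For a module $M$ that is the restriction of a $G$-module, the induced module is isomorphic to $\Z_p[G/\mathcal D_\ell]\otimes\Z_p(1)$ with diagonal action, by the untwisting map $g\otimes z\mapsto gc\otimes gz$. This is \eqref{eq:permutation-model}. A global root of unity $z$ reduces to the same element at every prime in the untwisted coordinates, which gives \eqref{eq:diagonal}.

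For \eqref{eq:local-Lambda}, write $G/\mathcal D_\ell=(\Delta/H_\ell)\times(\Gamma/\Gamma^{s_\ell})$. The twist by $\Z_p(1)$ sends the $\theta$-component of $\Z_p[G/\mathcal D_\ell]\otimes\Z_p(1)$ to a twisted component of $\Z_p[G/\mathcal D_\ell]$. Concretely, $Y_\ell(\theta)$ is nonzero exactly when $\theta\omega^{-1}$ is trivial on $H_\ell$, which is the matching condition $\epsilon_\ell(\theta)=1$. In that case the component is $\Z_p[\Gamma/\Gamma^{s_\ell}]\otimes\Z_p(1)$ as a $\Lambda$-module. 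This module is $\Lambda/((1+T)^{s_\ell}-1)$ twisted by $\kappa|_\Gamma$, and twisting replaces $1+T$ by $u^{-1}(1+T)$. The ideal therefore becomes $((1+T)^{s_\ell}-u^{s_\ell})=(R_\ell)$.

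The main obstacle is the norm-compatibility bookkeeping in the unstable layers and the check that Frobenius acts by $\kappa$ rather than $\kappa^{-1}$ under the paper's arithmetic normalization. We handle both by working only in the stable range, which suffices for the inverse limit.
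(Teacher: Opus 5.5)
Your proposal is correct and follows essentially the paper's route. Both arguments work on the cofinal stable range $n\geq a_\ell-1$, identify each residue-field $p$-group with $\mu_{p^{n+1}}$ by reduction, check that norms become $\zeta\mapsto\zeta^p$, and read off the matching component as $\Z_p[\Gamma/\Gamma^{s_\ell}]\otimes\Z_p(1)\simeq\Lambda/(R_\ell)$. The differences are cosmetic: you pass through $\Ind_{\mathcal D_\ell}^G\Z_p(1)$ and untwist, where the paper observes directly that reduction at all primes is equivariant, and you get $(R_\ell)$ from the twist $1+T\mapsto u^{-1}(1+T)$ rather than from the paper's explicit twisted basis and $\Z_p$-rank comparison.
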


\begin{proof}
On the cofinal range $n\geq a_\ell-1$, the primes
above $\ell$ no longer split and are indexed by
$G/\mathcal D_\ell$.
Each residue-field $p$-group has order $p^{n+1}$,
so reduction identifies it with $\mu_{p^{n+1}}$.
These identifications are equivariant, and the coordinate
norms correspond to $\zeta\mapsto\zeta^p$.
This proves \eqref{eq:permutation-model}.
Simultaneous reduction gives \eqref{eq:diagonal}.

The $\theta$-component is nonzero precisely when
$\theta\omega^{-1}$ is trivial on $H_\ell$.
In that case its $\Gamma$-module is
\[
\Z_p[\Gamma/\Gamma^{s_\ell}]\otimes_{\Z_p}\Z_p(1).
\]
For the twisted permutation basis
$v_0,\ldots,v_{s_\ell-1}$,
\[
\gamma v_i=u v_{i+1}
\]
with cyclic indices.
Thus $v_0$ generates over $\Lambda$ with relation
\[
(\gamma^{s_\ell}-u^{s_\ell})v_0=0.
\]
Since
\[
R_\ell(T)\equiv T^{s_\ell}\pmod p,
\]
the polynomial $R_\ell$ is distinguished, and
$\Lambda/(R_\ell)$ is free of rank $s_\ell$
over $\Z_p$.
The target has the same rank, so the resulting
surjection is an isomorphism.
This proves \eqref{eq:local-Lambda}.
\end{proof}

The stable-level identifications also identify $Y_\ell$
with the Tate module of the product of the infinite
residue-field multiplicative groups.
This is the cyclotomic characterwise form of the local
structure in \cite{GP15}.

\subsection{The diagonal quotient}

\begin{proposition}\label{prop:quotient-proof}
For odd $\theta\ne\omega$,
\[
C_\Sigma(\theta)
\simeq
\bigoplus_{\substack{\ell\in\Sigma\\
\epsilon_\ell(\theta)=1}}
\Lambda/(R_\ell).
\]
If $a_{\ell_0}$ is minimal, then
\begin{equation}\label{eq:omega-quotient}
C_\Sigma(\omega)
\simeq
\Lambda/(S_{\ell_0})
\oplus
\bigoplus_{\ell\in\Sigma\setminus\{\ell_0\}}
\Lambda/(R_\ell).
\end{equation}
Consequently \eqref{eq:multi-quotient} holds.
\end{proposition}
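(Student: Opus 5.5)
The plan is to pass to character components in $C_\Sigma=Y_\Sigma^-/\Z_p(1)$; this is exact because the idempotents $e_\theta$ are integral. Since $G$ acts on $\Z_p(1)$ through $\kappa$ and $\kappa|_\Delta=\omega$, the diagonal Tate module lies entirely in the $\omega$-component. So for odd $\theta\ne\omega$ we simply have $C_\Sigma(\theta)=Y_\Sigma(\theta)=\bigoplus_\ell Y_\ell(\theta)$. Proposition~\ref{prop:local-presentation}, equation \eqref{eq:local-Lambda}, then gives the first formula directly.

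For $\theta=\omega$, every $\ell\in\Sigma$ is matching, because $\omega(\delta_\ell)=\omega(\ell)$. Hence $Y_\Sigma(\omega)\simeq\bigoplus_{\ell\in\Sigma}\Lambda/(R_\ell)$, with generators $v_{0,\ell}$ as in the proof of Proposition~\ref{prop:local-presentation}.

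The first task is to locate the image of a generator $z$ of $\Z_p(1)$ in each block. By \eqref{eq:diagonal}, its $\omega$-projection in the $\ell$-block is, up to a unit, the sum $\sum_{i=0}^{s_\ell-1}v_i$ of the twisted permutation basis. Using $\gamma v_i=uv_{i+1}$, this sum equals $\sum_i u^{-i}\gamma^i v_0$, that is, $\sum_{i=0}^{s_\ell-1}u^{s_\ell-1-i}(1+T)^i$ times $u^{1-s_\ell}v_0$. By the geometric series identity this is exactly $u^{1-s_\ell}S_\ell(T)v_0$, where $S_\ell=R_\ell/L$. So the Tate vector is $(c_\ell S_\ell)_{\ell\in\Sigma}$ with units $c_\ell$, and after rescaling each generator it becomes $(S_\ell)_\ell$.

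Next comes the triangular change of coordinates. Because $a_{\ell_0}$ is minimal, $s_{\ell_0}$ divides $s_\ell$. Writing $x=(1+T)$ and $w=u^{s_{\ell_0}}$, we get $R_\ell=(x^{s_{\ell_0}})^{s_\ell/s_{\ell_0}}-w^{s_\ell/s_{\ell_0}}$, which is divisible by $R_{\ell_0}$. Hence $S_\ell=S_{\ell_0}P_\ell$ with $P_\ell=R_\ell/R_{\ell_0}\in\Lambda$. Moreover, multiplication by $P_\ell$ gives a well-defined $\Lambda$-map $\Lambda/(R_{\ell_0})\to\Lambda/(R_\ell)$, since $P_\ell R_{\ell_0}=R_\ell$. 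The automorphism of $\bigoplus_\ell\Lambda/(R_\ell)$ that fixes the $\ell\ne\ell_0$ coordinates and subtracts $P_\ell$ times the $\ell_0$-coordinate from them is invertible, being unipotent. It sends $(S_\ell)_\ell$ to $(S_{\ell_0},0,\ldots,0)$. Therefore
\[
C_\Sigma(\omega)\simeq\Lambda/(R_{\ell_0},S_{\ell_0})\oplus\bigoplus_{\ell\ne\ell_0}\Lambda/(R_\ell),
\]
and since $S_{\ell_0}\mid R_{\ell_0}$ the first summand is $\Lambda/(S_{\ell_0})$. When $s_{\ell_0}=1$, $S_{\ell_0}=1$ and the convention $\Lambda/(1)=0$ applies. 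This proves \eqref{eq:omega-quotient}.

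Finally, \eqref{eq:multi-quotient} follows by summing over odd $\theta$. Each $\ell$ is matching for exactly $b_\ell$ odd characters, as counted in the proof of Theorem~\ref{thm:finite}, and $\omega$ is one of them. So $\ell_0$ contributes one $\Lambda/(S_{\ell_0})$ and $b_{\ell_0}-1$ copies of $\Lambda/(R_{\ell_0})$, and every other $\ell$ contributes $b_\ell$ copies of $\Lambda/(R_\ell)$.

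The main obstacle is the precise identification of the Tate image with $S_\ell v_0$. This requires matching the orientation convention $\gamma v_i=uv_{i+1}$, in the right direction, with the actual geometric series. I would first verify it on the $\Gamma/\Gamma^{s_\ell}$ permutation model, checking that $\gamma$ fixes the diagonal vector exactly up to the factor $u$; that is, that $L$ annihilates it. I would then compare the result with $R_\ell=L\cdot S_\ell$.
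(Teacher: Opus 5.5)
Your proposal is correct and follows essentially the same route as the paper: you isolate the Tate vector in the $\omega$-component, identify it blockwise with $u^{1-s_\ell}S_\ell v_0$ via the geometric series, rescale, and use $R_{\ell_0}\mid R_\ell$ (from $s_{\ell_0}\mid s_\ell$) for a unipotent change of coordinates that moves the Tate vector into the $\ell_0$-block. Your $P_\ell=R_\ell/R_{\ell_0}$ is exactly the paper's $V_\ell$, and your final count via the $b_\ell$ matching odd characters is the same as the paper's.
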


\begin{proof}
The Tate module has $\Delta$-character $\omega$,
so the other components are unchanged.

In $Y_\ell(\omega)$, use the twisted permutation
basis from Proposition~\ref{prop:local-presentation},
with the invariant vector in the $\Delta/H_\ell$-factor.
The Tate vector is
\[
w_\ell=\sum_{i=0}^{s_\ell-1}v_i.
\]
Since $v_i=u^{-i}\gamma^iv_0$,
\[
w_\ell
=u^{-(s_\ell-1)}
\left(\sum_{i=0}^{s_\ell-1}
\gamma^i u^{s_\ell-1-i}\right)v_0
=u^{-(s_\ell-1)}S_\ell(T)v_0.
\]
After rescaling the cyclic generators by units,
identify
\[
Y_\Sigma(\omega)
\simeq\bigoplus_{\ell\in\Sigma}\Lambda/(R_\ell).
\]
Under this identification, the diagonal Tate submodule
is generated by $(S_\ell)_{\ell\in\Sigma}$.

Put $R_0=R_{\ell_0}$ and $S_0=S_{\ell_0}$.
Since $s_{\ell_0}\mid s_\ell$, there is
$V_\ell\in\Z_p[T]$ such that
\[
R_\ell=V_\ell R_0,\qquad S_\ell=V_\ell S_0.
\]
Multiplication by $V_\ell$ defines a homomorphism
$\Lambda/(R_0)\to\Lambda/(R_\ell)$.
Therefore
\[
(x_0,(x_\ell)_{\ell\ne\ell_0})
\longmapsto
(x_0,(x_\ell-V_\ell x_0)_{\ell\ne\ell_0})
\]
is an automorphism.
It sends the Tate generator to $(S_0,0,\ldots,0)$.
The quotient of its first coordinate is
\[
\Lambda/(R_0,S_0)=\Lambda/(S_0).
\]
This proves \eqref{eq:omega-quotient}.
Adding the other odd character components gives
\eqref{eq:multi-quotient}.
\end{proof}

\begin{remark}
The coordinate changes act only among the
$\omega$-components.
They do not identify distinct $\Delta$-characters.
The character-suppressed decomposition
\eqref{eq:multi-quotient} is a $\Lambda$-module
decomposition.
\end{remark}

Since $s_\ell$ is a power of $p$,
\[
R_\ell(T)\equiv T^{s_\ell}\pmod p,
\qquad
S_\ell(T)\equiv T^{s_\ell-1}\pmod p.
\]
These polynomials are distinguished, with $S_\ell=1$
when $s_\ell=1$.
Thus $C_\Sigma$ is free over $\Z_p$.
Equivalently, the Tate embedding is primitive over
$\Z_p$: its quotient has no $\Z_p$-torsion.

\subsection{Local descent and finite group-ring presentations}

Recall that
\[
\Gamma_n=\Gamma^{p^n},
\qquad
\Omega_n(T)=(1+T)^{p^n}-1.
\]
Under the identification $\gamma=1+T$, the element
$\Omega_n$ corresponds to $\gamma^{p^n}-1$, and
\[
\Lambda/(\Omega_n)
\simeq\Z_p[\Gamma/\Gamma_n].
\]
For a $\Lambda$-module $M$, we write
\[
M_{\Gamma_n}=M/\Omega_nM,
\qquad
M[\Omega_n]=\ker(\Omega_n:M\longrightarrow M).
\]
Here $\Omega_n$ also denotes the endomorphism of $M$
given by multiplication by $\Omega_n(T)$.
In particular, $\Omega_0(T)=T$.

\begin{proposition}\label{prop:descent-proof}
For every $n\geq0$, projection induces
\[
(Y_\ell)_{\Gamma_n}\simeq B_{n,\ell},
\qquad
(C_\Sigma)_{\Gamma_n}\simeq D_{n,\Sigma},
\]
compatibly with $\Delta$.

As modules over $\Z_p[\Gamma/\Gamma_n]$, one has
\begin{equation}\label{eq:finite-group-ring}
D_{n,\Sigma}(\theta)
\simeq
\bigoplus_{\substack{\ell\in\Sigma\\
\epsilon_\ell(\theta)=1}}
\Lambda/(R_\ell,\Omega_n)
\qquad(\theta\ne\omega,\ \theta\text{ odd}),
\end{equation}
and
\begin{equation}\label{eq:finite-group-ring-omega}
D_{n,\Sigma}(\omega)
\simeq
\Lambda/(S_{\ell_0},\Omega_n)
\oplus
\bigoplus_{\substack{\ell\in\Sigma\\\ell\ne\ell_0}}
\Lambda/(R_\ell,\Omega_n),
\end{equation}
where $a_{\ell_0}$ is minimal.
\end{proposition}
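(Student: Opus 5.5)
The plan is to prove the local statement $(Y_\ell)_{\Gamma_n}\simeq B_{n,\ell}$ first, and then deduce the statement for $C_\Sigma$ by a snake-lemma argument applied to the Tate sequence.

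\emph{Local descent.} By Proposition~\ref{prop:local-presentation}, the $\theta$-component of $Y_\ell$ vanishes unless $\epsilon_\ell(\theta)=1$, in which case it is $\Lambda/(R_\ell)$. So $(Y_\ell)_{\Gamma_n}(\theta)\simeq\Lambda/(R_\ell,\Omega_n)$.

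We compute this quotient directly. Write $s=s_\ell$ and $N=p^n$.
\begin{itemize}
\item If $N\le s$, then $\Omega_n$ divides $\gamma^{s}-1$. Modulo $\Omega_n$ we get $R_\ell\equiv 1-u^{s}$, and $v_p(1-u^s)=a_\ell$. Hence the quotient is $\Lambda/(\Omega_n,p^{a_\ell})\simeq(\Z/p^{a_\ell}\Z)^{p^n}$. Since $m_{n,\ell}=a_\ell$ and $q_{n,\ell}=p^n$ in this range, this agrees with Proposition~\ref{prop:finite-induced}.
\item If $N>s$, then in $\Lambda/(R_\ell)$ we have $\gamma^{s}=u^{s}$. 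Hence $\Omega_n$ acts as $u^{N}-1$, which has valuation $n+1$. The quotient is therefore $(\Z/p^{n+1}\Z)^{s}$, again matching $q_{n,\ell}=s$ and $m_{n,\ell}=n+1$.
\end{itemize}

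Equality of orders alone does not give the isomorphism. The natural map is the projection $Y_\ell\to B_{n,\ell}$, and it factors through $\Gamma_n$-coinvariants because the projection is $\Gamma$-equivariant and $\Gamma_n$ acts trivially on $B_{n,\ell}$. So the real point is surjectivity of the projection. For this, take $N'\ge\max(n,a_\ell-1)$. At that level each residue-field extension is unramified and finite, and the residue-field norm on finite fields is surjective. Hence every transition map of the system is surjective, and so is the projection from the limit. Surjectivity together with the order count gives the isomorphism, compatibly with $\Delta$.

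\emph{Quotient $C_\Sigma$.} Take $\Gamma_n$-coinvariants of the sequence $0\to\Z_p(1)\to Y_\Sigma^-\to C_\Sigma\to0$. The snake lemma gives the exact sequence
\[
C_\Sigma[\Omega_n]\to\Z_p(1)_{\Gamma_n}\to(Y_\Sigma^-)_{\Gamma_n}\to(C_\Sigma)_{\Gamma_n}\to0 .
\]
Here $\Z_p(1)_{\Gamma_n}=\Z_p/(u^{p^n}-1)\simeq\mu_{p^{n+1}}$, and $(Y_\Sigma^-)_{\Gamma_n}\simeq B_{n,\Sigma}^-$ by the local descent. The map $\mu_{p^{n+1}}\to B_{n,\Sigma}^-$ is the simultaneous reduction, which is injective by Proposition~\ref{prop:minus-sequence}. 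Comparing with that sequence, $(C_\Sigma)_{\Gamma_n}$ is identified with the cokernel of simultaneous reduction, which is $D_{n,\Sigma}$. Naturality of this identification follows because all maps are induced by the projections.

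\emph{Explicit forms.} Formulas \eqref{eq:finite-group-ring} and \eqref{eq:finite-group-ring-omega} follow by applying $\otimes\Lambda/(\Omega_n)$ to the decomposition of Proposition~\ref{prop:quotient-proof}. This uses $\Lambda/(R_0,S_0)=\Lambda/(S_0)$ and the fact that the coordinate change there is $\Lambda$-linear, hence $\Z_p[\Gamma/\Gamma_n]$-linear after reduction.

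The main obstacle I expect is the surjectivity and compatibility of the projection $Y_\ell\to B_{n,\ell}$ in the pre-stable range $n<a_\ell-1$. In that range primes above $\ell$ still split, so the norm maps sum over several primes rather than acting on a single residue field, and the bookkeeping of which primes map to which needs care.
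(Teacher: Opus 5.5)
Your proposal is correct and follows essentially the same route as the paper. You derive surjectivity of $Y_\ell/\Omega_nY_\ell\to B_{n,\ell}$ from surjectivity of finite-field norms, and you use the same two-case order count for $\Lambda/(R_\ell,\Omega_n)$ against Proposition~\ref{prop:finite-induced}. For $C_\Sigma$ you use right exactness of coinvariants together with injectivity of simultaneous reduction, and you finish by reducing the decomposition of Proposition~\ref{prop:quotient-proof} modulo $\Omega_n$.

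The pre-stable-range obstacle you flag is already settled by your own argument. When a prime splits, the transition map is a product of residue-field norms, each of which is surjective, which the paper records in one sentence.
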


\begin{proof}
Finite-field norms are surjective, also on their
$p$-primary parts.
When a prime splits, the product map over primes above
it is surjective.
Hence projection gives a surjection
\[
Y_\ell/\Omega_nY_\ell\longrightarrow B_{n,\ell}.
\]

For a matching character, the source component is
$\Lambda/(R_\ell,\Omega_n)$.
Write $s=s_\ell$.
If $n\geq a_\ell-1$, then $s\mid p^n$, and modulo
$R_\ell$,
\[
\Omega_n\equiv u^{p^n}-1.
\]
Since $v_p(u^{p^n}-1)=n+1$, the underlying group is
\[
(\Z/p^{n+1}\Z)^s.
\]
If $n<a_\ell-1$, then $p^n\mid s$, and modulo
$\Omega_n$,
\[
R_\ell\equiv1-u^s.
\]
Its valuation is $a_\ell$, so the underlying group is
\[
(\Z/p^{a_\ell}\Z)^{p^n}.
\]
These orders agree with
Proposition~\ref{prop:finite-induced}.
Nonmatching components vanish on both sides.
The surjection is therefore an isomorphism.

Taking coinvariants in
\[
\Z_p(1)\longrightarrow Y_\Sigma^-
\longrightarrow C_\Sigma\longrightarrow0
\]
is right exact.
Projection identifies
\[
\Z_p(1)_{\Gamma_n}
=\Z_p(1)/(u^{p^n}-1)\Z_p(1)
\simeq\mu_{p^{n+1}}.
\]
Its map to $B_{n,\Sigma}^-$ is simultaneous reduction,
which is injective.
Hence
\[
(C_\Sigma)_{\Gamma_n}
\simeq B_{n,\Sigma}^-/\mu_{p^{n+1}}
=D_{n,\Sigma}.
\]
The group-ring presentations follow by taking
coinvariants in Proposition~\ref{prop:quotient-proof}.
\end{proof}

\subsection{Noncapitulation, freeness, and global control}

We include the noncapitulation argument needed to
exclude finite submodules.
This also separates that input from ordinary control.
For $m\geq n$, write
\[
i_{n,m}:A_n\longrightarrow A_m,
\qquad
N_{m/n}:A_m\longrightarrow A_n
\]
for extension of ideals and the ideal norm,
respectively.

\begin{lemma}\label{lem:noncapitulation}
For $m\geq n$, extension of ideals induces an
injection
\[
i_{n,m}:A_n^-\longrightarrow A_m^-.
\]
Consequently, $X^-$ has no nonzero finite
$\Lambda$-submodule.
\end{lemma}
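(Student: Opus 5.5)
The plan is to use the standard identity $N_{m/n}\circ i_{n,m}=[K_m:K_n]=p^{m-n}$ on $A_n$, together with the minus-part trick based on the prime above $p$. First I reduce to $m=n+1$ by transitivity. The classical argument (Washington, Prop.~13.26 style) does not go through norms; it uses the Hochschild--Serre / cohomological description of capitulation. Let $G_{m/n}=\Gal(K_m/K_n)$, which is cyclic of order $p^{m-n}$. The kernel of $i_{n,m}$ on ideal classes injects into $H^1(G_{m/n},E_{K_m})$. Concretely, if $\mathfrak a\OO_{K_m}=(\alpha)$, then $\sigma\mapsto\sigma(\alpha)/\alpha$ is a cocycle, and it is a coboundary exactly when $\mathfrak a$ is already principal in $K_n$. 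So I will show that the minus part of this $H^1$, after tensoring with $\Z_p$, vanishes. The point is that $j$ commutes with $G_{m/n}$, so the minus idempotent is exact on the relevant cohomology.

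I then compute $H^1(G_{m/n},E_{K_m})^-$. Since $G_{m/n}$ is cyclic, $H^1\simeq\hat H^{-1}=\ker(N)/I_G E$. From the proof of Proposition~\ref{prop:minus-sequence}, $(E_{K_m}\otimes\Z_p)^-=\mu_{p^{m+1}}$. On $\mu_{p^{m+1}}$ the norm is $\zeta\mapsto\zeta^{p^{m-n}}$, as computed there, so its kernel is $\mu_{p^{m-n}}$. Writing a generator $\sigma$ of $G_{m/n}$ with $\sigma\zeta=\zeta^{1+p^{n+1}t}$ for a unit $t$, we get $(\sigma-1)\mu_{p^{m+1}}=\mu_{p^{m+1}}^{p^{n+1}}=\mu_{p^{m-n}}$. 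Hence $\hat H^{-1}(\mu_{p^{m+1}})=0$. There is a subtlety here: $H^1$ of $E$ versus of $E\otimes\Z_p$, and the minus part of $E$ itself versus the unit-index-$2$ issue. Since $p$ is odd, $E\otimes\Z_p$ splits as plus $\oplus$ minus, and the cohomology of $E$ tensored with $\Z_p$ equals the cohomology of $E\otimes\Z_p$ because $G_{m/n}$ is a $p$-group and the groups are finitely generated. This gives $H^1(G_{m/n},E_{K_m})\otimes\Z_p$ minus part $=0$, hence $\ker(i_{n,m})\cap A_n^-=0$.

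The minus-part functoriality needs one check: the capitulation map $\ker i_{n,m}\to H^1$ must be $j$-equivariant. It is, since $j$ commutes with everything. Because the target lives in the $p$-part, restricting to $A_n^-$ lands in $H^1(\ldots)^-\otimes\Z_p$. I expect this equivariance bookkeeping to be the main place requiring care, though it is routine.

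For the consequence, I would argue as in Washington, Prop.~13.28. Suppose $F\subset X^-$ is a finite $\Lambda$-submodule. By Ferrero--Washington-independent reasoning it suffices to use the control isomorphism from Theorem~\ref{thm:correction}, $(X^-)_{\Gamma_n}\simeq A_n^-$; if that is not yet available at this point, I would instead use the standard fact that the prime above $p$ is totally ramified and principal, which gives $X^-/\omega_n X^-\simeq A_n^-$. Take $n$ large so that $F$ is fixed by $\Gamma_n$ and $F\cap\Omega_nX^-=0$; the latter holds because $F$ is finite and $\bigcap_n\Omega_nX^-=0$. Then $F$ injects into $A_n^-$. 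Its image consists of classes $x$ with $\Gamma_n$ acting trivially, and the image of $x$ in $A_m^-$ for $m>n$ is the lift of $\nu_{m,n}x=p^{m-n}x$. Here I use that $i_{n,m}$ composed with projection corresponds to multiplication by $\nu_{m,n}=\Omega_m/\Omega_n$ on $X^-$, and that $\nu_{m,n}$ acts on $\Gamma_n$-fixed elements as $p^{m-n}$. For $m-n$ large this kills $F$, so $i_{n,m}$ kills a nonzero element of $A_n^-$. This contradicts injectivity, so $F=0$.
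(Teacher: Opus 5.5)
Your proposal is correct and follows essentially the same route as the paper: you embed the minus $p$-primary capitulation kernel into $H^1(\Gal(K_m/K_n),\mu_{p^{m+1}})$, show this group vanishes because the image of $\sigma-1$ and the kernel of the norm both equal $\mu_{p^{m-n}}$, and then kill a finite submodule using $i_{n,m}\circ N_{m/n}=\nu_{m,n}$, which acts as $p^{m-n}$ on $\Gamma_n$-fixed elements. The one difference is cosmetic: you first embed $F$ into $A_n^-$ via control and argue by contradiction. This is not circular, since the paper proves the control isomorphism independently of this lemma, and in any case $\bigcap_n\ker(X^-\to A_n^-)=0$ would suffice. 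The paper instead applies the same identity directly to the coordinates $x_n$ of an element of $F$ and concludes that $x_n=0$ for all large $n$.
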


\begin{proof}
It suffices to consider $m>n$.
Put $H=\Gal(K_m/K_n)$, and let $\tau$ be the
restriction of $\gamma^{p^n}$ to $K_m$.
Then $\tau$ generates $H$.

If an ideal class $[\mathfrak a]$ capitulates in
$K_m$, choose $\alpha\in K_m^\times$ such that
\[
\mathfrak a\OO_{K_m}=(\alpha).
\]
The map
\[
\sigma\longmapsto\frac{\sigma(\alpha)}{\alpha}
\qquad(\sigma\in H)
\]
is a cocycle with values in $E_{K_m}$.
Its cohomology class is independent of the choices.
If this cocycle is a coboundary, then $\alpha$
differs by a global unit from an element of
$K_n^\times$.
Since extension is injective on fractional ideals,
it follows that $[\mathfrak a]=0$.
Thus the capitulation kernel embeds into
$H^1(H,E_{K_m})$.
This injection is compatible with complex conjugation.

Flatness of $\Z_p$ and exactness of the minus
idempotent allow these operations to be applied to
the finite-group cochain complex.
The negative $p$-primary capitulation kernel therefore
embeds into
\[
H^1\bigl(H,(E_{K_m}\otimes\Z_p)^-\bigr)
=H^1(H,\mu_{p^{m+1}}).
\]
Write $\mu_{p^{m+1}}$ additively.
On this cyclic group, $\tau-1$ acts as multiplication
by $u^{p^n}-1$, whose valuation is $n+1$.
Its image consequently has order $p^{m-n}$.

Let
\[
N_H=1+\tau+\cdots+\tau^{p^{m-n}-1}
\]
be the group-ring norm operator.
By iterating the root-of-unity norm calculation in
Proposition~\ref{prop:minus-sequence}, $N_H$ acts
multiplicatively as $\zeta\mapsto\zeta^{p^{m-n}}$.
Its kernel on $\mu_{p^{m+1}}$ also has order
$p^{m-n}$.
Since $(\tau-1)\mu_{p^{m+1}}$ lies in this kernel,
the two groups are equal.
The cyclic-group formula
\[
H^1(H,\mu_{p^{m+1}})
=\frac{\ker N_H}{(\tau-1)\mu_{p^{m+1}}}
\]
therefore gives zero.
This proves the injectivity of $i_{n,m}$ on minus
parts.

Now let $F$ be a finite $\Lambda$-submodule of $X^-$.
Choose $t\geq1$ and $n_0\geq0$ such that
$p^tF=0$ and $\Gamma_{n_0}$ acts trivially on $F$.
For $x=(x_m)_m\in F$, $n\geq n_0$, and
$m\geq n+t$, norm compatibility and the
extension--norm identity give
\[
i_{n,m}(x_n)
=i_{n,m}(N_{m/n}x_m)
=\sum_{\sigma\in\Gal(K_m/K_n)}\sigma x_m
=p^{m-n}x_m=0.
\]
Injectivity of $i_{n,m}$ implies $x_n=0$.
All lower coordinates are norms of such coordinates,
so $x=0$.
Hence $F=0$.
\end{proof}

\begin{proposition}\label{prop:freeness}
The modules $X^-$, $C_\Sigma$, and $X_\Sigma^-$
are free of finite rank over $\Z_p$.
\end{proposition}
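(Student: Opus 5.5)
We treat the three modules in turn, proving $X^-$ free first, then $C_\Sigma$, and then $X_\Sigma^-$ as an extension of the two.

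\textbf{$X^-$.} The standard structure theory applies. $X^-$ is a finitely generated torsion $\Lambda$-module: the limit of finite $p$-groups with $(X^-)_{\Gamma_0}$ finite gives finite generation by Nakayama, and torsion follows from Iwasawa's classical growth. By Ferrero--Washington \cite{FW79}, $\mu(X^-)=0$. Lemma~\ref{lem:noncapitulation} says $X^-$ has no nonzero finite $\Lambda$-submodule. Take a pseudo-isomorphism $X^-\to E=\bigoplus\Lambda/(f_i)$. With $\mu=0$, each $f_i$ may be taken distinguished, so $E$ is $\Z_p$-free of rank $\lambda$. The kernel of this map is finite, hence zero, so $X^-$ embeds in $E$. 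A submodule of a finitely generated free $\Z_p$-module is free. Thus $X^-$ is free of finite rank $\lambda(X^-)$.

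If one wants to avoid the full structure theorem, there is an alternative: the $\Z_p$-torsion submodule of $X^-$ is a $\Lambda$-submodule. Since $\mu=0$, it is killed by a power of $p$, and it is finitely generated over $\Lambda$ and contained in a $\lambda$-finite module, so it is finite. Lemma~\ref{lem:noncapitulation} then forces it to vanish. I expect the main point needing care to be exactly this step. One must check that $\mu=0$ together with finite generation makes $X^-[p^\infty]$ finite. This holds because a finitely generated $\Lambda$-module killed by $p^t$ with $\mu=0$ is finite: its characteristic ideal contains no power of $p$, yet the module is a $\Lambda/p^t$-module.

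\textbf{$C_\Sigma$.} Here we cite the remark after Proposition~\ref{prop:quotient-proof}. By \eqref{eq:multi-quotient}, $C_\Sigma$ is a finite direct sum of modules $\Lambda/(R_\ell)$ and $\Lambda/(S_{\ell_0})$, where $R_\ell$ and $S_{\ell_0}$ are distinguished polynomials of degrees $s_\ell$ and $s_{\ell_0}-1$. By Weierstrass division, each summand $\Lambda/(P)$ with $P$ distinguished of degree $d$ is $\Z_p$-free with basis $1,T,\dots,T^{d-1}$. Summing the degrees gives the rank $\sum_\ell b_\ell s_\ell-1=\sum_\ell d_\ell-1$, which is also the rank formula in Theorem~\ref{thm:correction}.

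\textbf{$X_\Sigma^-$.} Proposition~\ref{prop:inverse-sequence} gives the exact sequence $0\to C_\Sigma\to X_\Sigma^-\to X^-\to0$. Since $X^-$ is $\Z_p$-free, hence projective over $\Z_p$, the sequence splits over $\Z_p$. Therefore $X_\Sigma^-\simeq C_\Sigma\oplus X^-$ as $\Z_p$-modules, which is free of finite rank $\sum_\ell d_\ell-1+\lambda(X^-)$.
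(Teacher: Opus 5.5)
Your proposal is correct and matches the paper's proof: Ferrero--Washington together with Lemma~\ref{lem:noncapitulation} makes $X^-$ free over $\Z_p$, $C_\Sigma$ is free by \eqref{eq:multi-quotient}, and $0\to C_\Sigma\to X_\Sigma^-\to X^-\to0$ splits over $\Z_p$. For $X^-$ the paper takes your alternative route: $\mu=0$ makes $X^-$ finitely generated over $\Z_p$, so its $\Z_p$-torsion is finite and $\Lambda$-stable, hence zero. One small slip: where you say the characteristic ideal ``contains no power of $p$,'' you mean it is not divisible by $p$ (it equals $\Lambda$), which is what makes the $p$-power torsion pseudo-null and hence finite.
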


\begin{proof}
The classical module $X^-$ is a finitely generated
torsion $\Lambda$-module.
Ferrero--Washington \cite{FW79} gives $\mu(X^-)=0$,
so it is finitely generated over $\Z_p$.
Its $\Z_p$-torsion is finite and $\Lambda$-stable,
hence zero by Lemma~\ref{lem:noncapitulation}.
Thus $X^-$ is finite free over $\Z_p$.

The freeness of $C_\Sigma$ follows from
Proposition~\ref{prop:quotient-proof}.
The exact sequence
\[
0\longrightarrow C_\Sigma
\longrightarrow X_\Sigma^-
\longrightarrow X^-\longrightarrow0
\]
splits as a sequence of $\Z_p$-modules, proving the
assertion for $X_\Sigma^-$.
No $\Lambda$-linear splitting is asserted.
\end{proof}

\begin{lemma}[Ordinary control]\label{lem:ordinary-control}
For every $n\geq0$, norm projection induces
\[
(X^-)_{\Gamma_n}\simeq A_n^-.
\]
Moreover,
\[
X^-[\Omega_n]=0.
\]
\end{lemma}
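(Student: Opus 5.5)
The plan is the classical control argument for the cyclotomic tower, where the key input is that the unique prime above $p$ is totally ramified and principal. First we show that the norm maps $N_{m/n}:A_m^-\to A_n^-$ are surjective. Since $K_m/K_n$ is totally ramified at the prime $\mathfrak p_m=(1-\zeta_{p^{m+1}})$, the maximal unramified abelian $p$-extension of $K_n$ meets $K_m$ only in $K_n$. Class field theory then shows that the norm $\Cl(K_m)\to\Cl(K_n)$ is surjective on $p$-parts, and since the norm commutes with $j$, the same holds on minus parts. Hence the projections $X^-\to A_n^-$ are surjective.

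Next we identify the kernel as $\Omega_nX^-$, using the standard Iwasawa computation (Washington, Chapter 13). Let $L_\infty$ be the maximal unramified abelian pro-$p$ extension of $K_\infty$, with $X=\Gal(L_\infty/K_\infty)$. Because exactly one prime of $K_\infty$ lies above $p$ and it is totally ramified over $K_0$, its inertia group $I\subset\Gal(L_\infty/K_0)$ maps isomorphically onto $\Gamma$. The standard argument then gives $\Gal(L_n/K_n)\simeq X/\nu_n Y_0$ with $Y_0=\omega_0 X$ (here $\omega_0=\gamma-1=T$) and $\nu_n=\Omega_n/T$. Thus $X/\Omega_nX\simeq A_n$ via Artin reciprocity, and Artin maps are compatible with norms and restriction. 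Applying the exact idempotent $(1-j)/2$ gives $(X^-)_{\Gamma_n}\simeq A_n^-$. Note that with a single totally ramified prime the correction term $\nu_n Y_0$ equals $\Omega_nX$ exactly; this is why control is exact. One could alternatively use a direct norm-limit argument with ideal classes, but I would quote the Galois-theoretic computation, as that is where I expect the only real bookkeeping: matching the Galois-side limit with the norm-limit $X^-$ and checking $j$-equivariance.

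For $X^-[\Omega_n]=0$, we use Proposition~\ref{prop:freeness}: $X^-$ is free of finite rank $\lambda$ over $\Z_p$. Multiplication by $\Omega_n$ is a $\Z_p$-linear endomorphism of $X^-\simeq\Z_p^\lambda$ whose cokernel $A_n^-$ is finite by the first part. An endomorphism of $\Z_p^\lambda$ with finite cokernel has nonzero determinant, so it is injective. Hence $X^-[\Omega_n]=0$.

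The main obstacle is the first step: making the identification of the kernel with $\Omega_nX$ precise in the norm-limit convention. Once control is established, the vanishing of $X^-[\Omega_n]$ is formal.
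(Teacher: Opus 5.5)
Your proposal is correct and follows essentially the paper's route. You use an inertia section at the unique totally ramified prime above $p$ to identify $X/\Omega_nX$ with $A_n$ via reciprocity, pass to minus parts, and deduce $X^-[\Omega_n]=0$ from $\Z_p$-freeness (Proposition~\ref{prop:freeness}) together with finiteness of the cokernel.

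The differences are cosmetic. You quote Washington's computation $X/\nu_nY_0$ with $Y_0=TX$, which needs only uniqueness and total ramification, so the principality you mention is never used and the norm-surjectivity step is redundant. The paper instead builds $\mathcal M=\bigcup H_m$ and uses principality to exhibit the inertia section level by level.
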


\begin{proof}
Let $H_m$ be the Hilbert $p$-class field of $K_m$,
and put
\[
\mathcal M=\bigcup_{m\geq0}H_m.
\]
The inclusion
$H_mK_{m+1}\subseteq H_{m+1}$ shows that the
fields $H_m$ form an increasing sequence.
Moreover,
\[
H_m\cap K_\infty=K_m,
\]
because $H_m/K_m$ is unramified whereas every
nontrivial finite subextension of $K_\infty/K_m$
is ramified at the unique prime above $p$.
Restriction and Artin reciprocity give
\[
\Gal(H_mK_\infty/K_\infty)
\simeq\Gal(H_m/K_m)
\simeq A_m.
\]
Under these identifications, restriction between
successive levels corresponds to the ideal norm.
Since $\mathcal M$ contains $K_\infty$, taking
inverse limits gives
\[
\Gal(\mathcal M/K_\infty)
\simeq\varprojlim_m A_m=:X.
\]

Fix $n$.
For $m\geq n$, the extension $H_m/K_n$ is Galois:
the extension $K_m/K_n$ is Galois, and automorphisms
of $K_m$ preserve its Hilbert $p$-class field.
Choose a compatible system of primes above $p$ in
the fields $H_m$, for $m\geq n$.
The prime above $p$ in $K_m$ is principal, so it
splits completely in $H_m/K_m$.
The corresponding completion of $H_m$ is therefore
the same as that of $K_m$.
Since $K_m/K_n$ is totally ramified at $p$, the
inertia group at the chosen prime in $H_m/K_n$
maps isomorphically onto $\Gal(K_m/K_n)$.
Passing to inverse limits gives an inertia subgroup
$I_n$ that maps isomorphically onto $\Gamma_n$.
Thus inertia supplies a section of
\[
1\longrightarrow X
\longrightarrow\Gal(\mathcal M/K_n)
\longrightarrow\Gamma_n\longrightarrow1.
\]

The conjugation action of this section on $X$ is
the usual $\Gamma_n$-action.
Abelianizing and then killing the image of $I_n$
therefore gives $X/\Omega_nX$.
All inertia subgroups above the unique prime of
$K_n$ over $p$ are conjugate, so their images in
the abelianization coincide.
There are no other ramified finite primes in
$\mathcal M/K_n$, and $K_n$ has no real places.

The resulting quotient corresponds to the maximal
unramified abelian pro-$p$ subextension of
$\mathcal M/K_n$.
This subextension is exactly $H_n$: it is contained
in $H_n$ by the defining property of the Hilbert
$p$-class field, and $H_n$ is already contained in
$\mathcal M$.
Consequently,
\[
X/\Omega_nX\simeq A_n.
\]
Under reciprocity, this is norm projection.
It is therefore compatible with $\Delta$.
Taking minus parts proves the first assertion.

By Proposition~\ref{prop:freeness}, $X^-$ is finite
free over $\Z_p$.
Its quotient by $\Omega_n$ is finite, so
multiplication by $\Omega_n$ becomes invertible after
tensoring with $\Q_p$.
It is consequently injective on $X^-$.
\end{proof}

\begin{remark}
The control isomorphism in
Lemma~\ref{lem:ordinary-control} uses the principal,
unique, totally ramified prime above $p$.
Ferrero--Washington is not needed for that
isomorphism itself.
It is used in Proposition~\ref{prop:freeness}, and
hence in the subsequent proof of
$X^-[\Omega_n]=0$.
\end{remark}

\begin{proposition}[Ray class control]\label{prop:ray-control}
For every $n\geq0$, projection induces a natural
isomorphism
\[
(X_\Sigma^-)_{\Gamma_n}\simeq A_{n,\Sigma}^-.
\]
These isomorphisms identify the coinvariant sequence with
\[
0\longrightarrow D_{n,\Sigma}
\longrightarrow A_{n,\Sigma}^-
\longrightarrow A_n^-\longrightarrow0.
\]
\end{proposition}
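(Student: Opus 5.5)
The plan is to take $\Gamma_n$-coinvariants of the short exact sequence
\[
0\longrightarrow C_\Sigma\longrightarrow X_\Sigma^-\longrightarrow X^-\longrightarrow0
\]
from Proposition~\ref{prop:inverse-sequence}, and to compare the result with the finite sequence $0\to D_{n,\Sigma}\to A_{n,\Sigma}^-\to A_n^-\to0$ through the projection maps. The snake lemma applied to multiplication by $\Omega_n$ gives an exact sequence
\[
X^-[\Omega_n]\longrightarrow(C_\Sigma)_{\Gamma_n}\longrightarrow(X_\Sigma^-)_{\Gamma_n}\longrightarrow(X^-)_{\Gamma_n}\longrightarrow0.
\]
By Lemma~\ref{lem:ordinary-control}, $X^-[\Omega_n]=0$, so the coinvariant sequence is short exact on the left as well.

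Next I would build the comparison diagram. Projection from the inverse limits to level $n$ gives maps $C_\Sigma\to D_{n,\Sigma}$, $X_\Sigma^-\to A_{n,\Sigma}^-$, and $X^-\to A_n^-$. These are compatible with the exact sequences, because the finite sequences of Proposition~\ref{prop:minus-sequence} are norm compatible and the limit sequence was obtained by taking limits of them. Each projection kills $\Omega_n$ times the module. The reason is that $\gamma^{p^n}$ acts trivially on $K_n$, so $\gamma^{p^n}$ acts trivially on the level-$n$ coordinate. Hence the projections factor through the coinvariants, and we obtain a morphism of short exact sequences. The outer vertical maps are isomorphisms: on the left by Proposition~\ref{prop:descent-proof}, which gives $(C_\Sigma)_{\Gamma_n}\simeq D_{n,\Sigma}$, and on the right by Lemma~\ref{lem:ordinary-control}. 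The five lemma then makes the middle map an isomorphism, and this identifies the coinvariant sequence with the finite one.

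The main point needing care is checking that the two identifications agree. The map $D_{n,\Sigma}\to A_{n,\Sigma}^-$ must agree with the map induced on coinvariants. Likewise, the projection $X^-\to A_n^-$ used in Lemma~\ref{lem:ordinary-control}, which was defined through reciprocity, must coincide with coordinate projection of the norm inverse limit. Both follow because every map in sight is either the level-$n$ coordinate projection or a map induced from the finite sequences, which commute with norms. No further input is needed beyond the vanishing $X^-[\Omega_n]=0$; this is where Ferrero--Washington enters, through Proposition~\ref{prop:freeness}.
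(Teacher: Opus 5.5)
Your proposal is correct and follows essentially the same route as the paper. Both apply the snake lemma to multiplication by $\Omega_n$ and use $X^-[\Omega_n]=0$ to get left exactness. Both then map by projection to the finite sequence, with the outer isomorphisms coming from Proposition~\ref{prop:descent-proof} and Lemma~\ref{lem:ordinary-control}, so that the middle map is an isomorphism.
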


\begin{proof}
Apply the snake lemma to multiplication by $\Omega_n$
on
\[
0\longrightarrow C_\Sigma
\longrightarrow X_\Sigma^-
\longrightarrow X^-\longrightarrow0.
\]
Since $X^-[\Omega_n]=0$, the resulting sequence is
\[
0\longrightarrow(C_\Sigma)_{\Gamma_n}
\longrightarrow(X_\Sigma^-)_{\Gamma_n}
\longrightarrow(X^-)_{\Gamma_n}\longrightarrow0.
\]
Projection gives a morphism to the finite ray class
sequence.
The left and right maps are isomorphisms by
Proposition~\ref{prop:descent-proof} and
Lemma~\ref{lem:ordinary-control}.
The middle map is therefore an isomorphism.
\end{proof}

\begin{corollary}\label{cor:omega-tower}
One has $X(\omega)=0$, and hence
$A_n(\omega)=0$ for every $n$.
If $p$ is Bernoulli regular, then $X^-=0$ and
$A_n^-=0$ for every $n$.
\end{corollary}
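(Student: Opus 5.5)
We prove the first assertion through ordinary control, and the second through Herbrand--Ribet and Nakayama.

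\emph{Step 1: the $\omega$-component.} Start from the classical vanishing $A_0(\omega)=0$ recalled in the proof of Proposition~\ref{prop:omega}. By Lemma~\ref{lem:ordinary-control} with $n=0$, the isomorphism $(X^-)_{\Gamma_0}\simeq A_0^-$ is compatible with $\Delta$. Since taking the $\omega$-component is exact, this gives
\[
X(\omega)/TX(\omega)\simeq A_0(\omega)=0.
\]
The module $X(\omega)$ is a compact, finitely generated $\Lambda$-module, and $T$ lies in the maximal ideal of the local ring $\Lambda$. Topological Nakayama therefore forces $X(\omega)=0$. Applying Lemma~\ref{lem:ordinary-control} again for each $n$ gives
\[
A_n(\omega)\simeq X(\omega)/\Omega_nX(\omega)=0.
\]

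\emph{Step 2: the regular case.} Assume $p$ is Bernoulli regular. For each odd $\theta\ne\omega$, write $\theta=\omega^{1-k}$ with $k$ even and $2\leq k\leq p-3$. Herbrand--Ribet, in the direction already used in the proof of Corollary~\ref{cor:HR}, says that $A_0(\omega^{1-k})\ne0$ exactly when $p\mid B_k$. Regularity therefore gives $A_0(\theta)=0$ for these $\theta$. Together with $A_0(\omega)=0$, this yields $A_0^-=0$.

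For $p=3$ the only odd character is $\omega$, so this case is already covered by Step 1. In general one can instead cite Kummer's criterion that $p\nmid h_0^-$, and then apply Lemma~\ref{lem:relative} to get $A_0^-=0$.

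The Nakayama argument of Step 1, applied to all of $X^-$, then gives $X^-=0$. Lemma~\ref{lem:ordinary-control} then gives $A_n^-=0$ for every $n$.

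No step is a serious obstacle. The only point needing care is that the control isomorphism respects the $\Delta$-decomposition, which Lemma~\ref{lem:ordinary-control} states explicitly.
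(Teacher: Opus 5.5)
Your proposal is correct and follows the paper's proof: you use ordinary control at $n=0$ together with Nakayama to kill $X(\omega)$ and, in the regular case, all of $X^-$, and then control gives the finite layers. The only variation is that you get $A_0^-=0$ characterwise from Herbrand's direction of Herbrand--Ribet, whereas the paper cites Kummer's criterion directly (which you also mention as an alternative); either input suffices.
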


\begin{proof}
Ordinary control at $n=0$ gives
\[
X(\omega)/TX(\omega)\simeq A_0(\omega)=0.
\]
Nakayama's lemma implies $X(\omega)=0$.
The regular case follows in the same way from
$A_0^-=0$, using Kummer's criterion.
Control then gives the finite-layer assertions.
\end{proof}

\begin{proof}[Proof of Theorem~\ref{thm:correction}]
The exact sequence is obtained from
Proposition~\ref{prop:inverse-sequence}.
The module decomposition is
Proposition~\ref{prop:quotient-proof}.
Local descent and global control are
Proposition~\ref{prop:descent-proof},
Lemma~\ref{lem:ordinary-control}, and
Proposition~\ref{prop:ray-control}.
Taking degrees in \eqref{eq:multi-quotient} gives
\[
\begin{split}
\rank_{\Z_p}C_\Sigma
&=(s_{\ell_0}-1)+(b_{\ell_0}-1)s_{\ell_0}
+\sum_{\ell\ne\ell_0}b_\ell s_\ell\\
&=\sum_{\ell\in\Sigma}d_\ell-1.
\end{split}
\]
\end{proof}

\subsection{Characteristic ideals and growth}

For a finite free $\Z_p$-module $M$ with continuous
$\Gamma$-action, let $\gamma_M$ denote the
automorphism induced by $\gamma$, and set
\[
\Phi_M(T)=\det\bigl((1+T)I-\gamma_M\bigr).
\]
For $M=0$, this determinant is $1$.

Let $d=\rank_{\Z_p}M$, and let $A$ be the matrix
of $\gamma_M$ in a $\Z_p$-basis of $M$.
The continuous image of the pro-$p$ group $\Gamma$
in $\mathrm{GL}_d(\mathbb F_p)$ is a $p$-group.
Thus the reduction of $A$ modulo $p$ is unipotent,
and
\[
\Phi_M(T)\equiv T^d\pmod p.
\]
In particular, $\Phi_M$ is distinguished.
The standard matrix presentation is
\[
0\longrightarrow\Lambda^d
\xrightarrow{(1+T)I_d-A}
\Lambda^d
\longrightarrow M
\longrightarrow0,
\]
where the final map sends the standard basis to the
chosen $\Z_p$-basis of $M$.
Consequently,
\[
\charideal_\Lambda(M)=(\Phi_M(T));
\]
see \cite{GP15} or \cite[Section 10.2]{NT3}.

\begin{corollary}\label{cor:characteristic}
One has
\begin{equation}\label{eq:determinant}
\Phi_{X_\Sigma^-}(T)
=\Phi_{X^-}(T)
\frac{\prod_{\ell\in\Sigma}R_\ell(T)^{b_\ell}}{L(T)}
\end{equation}
and
\begin{equation}\label{eq:characteristic}
\charideal_\Lambda(X_\Sigma^-)
=\charideal_\Lambda(X^-)
\left(
\frac{\prod_{\ell\in\Sigma}R_\ell(T)^{b_\ell}}{L(T)}
\right).
\end{equation}
Consequently,
\[
\mu(X_\Sigma^-)=\mu(X^-)=0,
\qquad
\lambda(X_\Sigma^-)
=\lambda(X^-)+\sum_{\ell\in\Sigma}d_\ell-1.
\]
\end{corollary}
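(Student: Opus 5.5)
The plan is to combine the exact sequence $0\to C_\Sigma\to X_\Sigma^-\to X^-\to0$ of Theorem~\ref{thm:correction} with the multiplicativity of $\Phi_M$ in short exact sequences of finite free $\Z_p$-modules carrying a continuous $\Gamma$-action. By Proposition~\ref{prop:freeness}, all three modules are finite free over $\Z_p$. Choose a $\Z_p$-basis of $X_\Sigma^-$ extending one of $C_\Sigma$; this is possible because the sequence splits over $\Z_p$. In that basis the matrix of $\gamma$ is block upper triangular, so
\[
\Phi_{X_\Sigma^-}=\Phi_{C_\Sigma}\,\Phi_{X^-}.
\]
It therefore suffices to compute $\Phi_{C_\Sigma}$.

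For this I would use the cyclic decomposition \eqref{eq:multi-quotient}; $\Phi$ is multiplicative on direct sums. Each summand is $\Lambda/(F)$ with $F$ a monic distinguished polynomial: $R_\ell$, or $S_{\ell_0}=R_{\ell_0}/L$. Let $F$ have degree $d$. On the $\Z_p$-basis $1,T,\ldots,T^{d-1}$, multiplication by $1+T$ is a companion-type matrix. Its characteristic polynomial in the variable $X$ is $F(X-1)$, so $\det((1+T)I-\gamma)=F(T)$, up to the variable identification that the convention $\gamma=1+T$ fixes. Hence
\[
\Phi_{C_\Sigma}=\frac{R_{\ell_0}}{L}\,R_{\ell_0}^{\,b_{\ell_0}-1}\prod_{\ell\ne\ell_0}R_\ell^{\,b_\ell}
=\frac{\prod_{\ell\in\Sigma}R_\ell^{\,b_\ell}}{L},
\]
which gives \eqref{eq:determinant}. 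The division by $L$ is exact because $L\mid R_{\ell_0}$: indeed $u$ is a root of $X^{s}-u^{s}$. The case $s_{\ell_0}=1$ is included, since then $S_{\ell_0}=1$ and the block $\Lambda/(1)=0$ has $\Phi=1$.

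Next, \eqref{eq:characteristic} follows from the identity $\charideal_\Lambda(M)=(\Phi_M)$ stated just before the corollary.

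Finally come the invariants. $\mu(X^-)=0$ by Ferrero--Washington, and $\mu(X_\Sigma^-)=0$ because $X_\Sigma^-$ is finitely generated over $\Z_p$; equivalently, every factor is distinguished, so no power of $p$ divides the product. For $\lambda$, take degrees: $\deg R_\ell=s_\ell$ and $\deg L=1$. Then
\[
\lambda(X_\Sigma^-)=\lambda(X^-)+\sum_{\ell\in\Sigma}b_\ell s_\ell-1,
\]
and $b_\ell s_\ell=d_\ell$. This matches the rank formula in Theorem~\ref{thm:correction}.

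The only delicate point is the determinant identity $\Phi_{\Lambda/(F)}=F$: one must check that the sign and variable conventions in $\det((1+T)I-\gamma_M)$ really return $F(T)$ rather than a shifted or reciprocal polynomial. I would settle this by the companion-matrix computation. Alternatively, one can avoid it by arguing directly with characteristic ideals, which are multiplicative in exact sequences of torsion $\Lambda$-modules, and with $\charideal\Lambda/(F)=(F)$. Then $\Phi$ is identified through the stated equality $\charideal=(\Phi)$ together with monicity of both sides.
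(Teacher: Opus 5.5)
Your proposal is correct and follows the same route as the paper. You use multiplicativity of $\Phi$ along the $\Z_p$-split sequence $0\to C_\Sigma\to X_\Sigma^-\to X^-\to0$, the cyclic decomposition \eqref{eq:multi-quotient} of $C_\Sigma$, and a degree count; your companion-matrix check that $\Phi_{\Lambda/(F)}=F$ and the block-triangular argument just make explicit steps that the paper's brief proof leaves implicit.
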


\begin{proof}
Characteristic polynomials multiply in short exact
sequences of finite free $\Z_p$-modules with compatible
$\Gamma$-actions.
The decomposition of $C_\Sigma$ gives
\[
\Phi_{C_\Sigma}(T)
=\frac{\prod_{\ell\in\Sigma}R_\ell(T)^{b_\ell}}{L(T)}.
\]
This is distinguished of degree
$\sum_\ell d_\ell-1$.
The assertions follow.
\end{proof}

\begin{corollary}\label{cor:eventual-growth}
Put
\[
r_\Sigma=\sum_{\ell\in\Sigma}d_\ell-1,
\qquad
\lambda=\lambda(X^-).
\]
There is an integer $c$ such that, for all sufficiently
large $n$,
\[
v_p(h_n^-)=\lambda n+c,
\qquad
v_p(h_{n,\Sigma}^-)
=(\lambda+r_\Sigma)n+(c+r_\Sigma).
\]
\end{corollary}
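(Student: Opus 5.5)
The plan is to deduce both formulas from the standard asymptotic for finite free $\Z_p$-modules, using exact control and the class-number formulas already established.

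The key point is that, by Proposition~\ref{prop:freeness}, $X^-$ is finite free over $\Z_p$. By Lemma~\ref{lem:ordinary-control}, $X^-[\Omega_n]=0$ and $(X^-)_{\Gamma_n}\simeq A_n^-$. For such a module $M$ with $M[\Omega_n]=0$, the order of $M/\Omega_nM$ is $|\det(\Omega_n(\gamma_M))|_p^{-1}$. This determinant equals $\prod_\alpha\bigl((1+\alpha)^{p^n}-1\bigr)$, where $\alpha$ runs over the roots of $\Phi_M$ counted with multiplicity. Each root is in the maximal ideal because $\Phi_M$ is distinguished.

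First I would fix $n_0$ so that $p^{n_0}$ exceeds what is needed for every root $\alpha$. Then, for $n\ge n_0$, I expect the standard estimate
\[
v_p\bigl((1+\alpha)^{p^{n+1}}-1\bigr)=v_p\bigl((1+\alpha)^{p^n}-1\bigr)+1 .
\]
This holds once $v_p\bigl((1+\alpha)^{p^n}-1\bigr)>1/(p-1)$, which eventually happens since $(1+\alpha)^{p^n}\to1$. Nonvanishing at every level is automatic from $M[\Omega_n]=0$. Summing over the $\lambda(M)=\deg\Phi_M$ roots gives
\[
\length_{\Z_p}(M/\Omega_{n+1}M)=\length_{\Z_p}(M/\Omega_nM)+\lambda(M)
\]
for large $n$. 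Hence the length is $\lambda(M)n+c_M$ eventually.

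Applying this with $M=X^-$ and Lemma~\ref{lem:relative} gives $v_p(h_n^-)=\lambda n+c$. For the ray version I would not redo the asymptotic for $X_\Sigma^-$. Instead I would use \eqref{eq:finite-class-number}, or its stable form \eqref{eq:stable-class-number}. For $n+1\ge\max_\ell a_\ell$ this gives
\[
v_p(h_{n,\Sigma}^-)=v_p(h_n^-)+(n+1)r_\Sigma ,
\]
because $\sum_\ell b_\ell s_\ell=\sum_\ell d_\ell$. Adding the two formulas produces exactly $(\lambda+r_\Sigma)n+(c+r_\Sigma)$ once $n$ is large enough for both ranges.

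The only delicate step is the root estimate, including the uniformity of $n_0$ over the finitely many roots. The condition $M[\Omega_n]=0$, already proved for $X^-$, removes the usual worry about roots of the form $\zeta-1$.
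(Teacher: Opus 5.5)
Your argument is correct. For the ray formula it is exactly the paper's: add the stable comparison \eqref{eq:stable-class-number} to the ordinary formula. The difference lies in the ordinary formula $v_p(h_n^-)=\lambda n+c$. The paper simply cites Iwasawa's classical growth theorem together with Ferrero--Washington. You instead derive it from the paper's own control results, Proposition~\ref{prop:freeness} and Lemma~\ref{lem:ordinary-control}, via
\[
\length_{\Z_p}A_n^-=v_p\bigl(\det(\gamma_{X^-}^{p^n}-I)\bigr)=\sum_\alpha v_p\bigl((1+\alpha)^{p^n}-1\bigr),
\]
with $\alpha$ running over the roots of $\Phi_{X^-}$ counted with multiplicity, and the estimate $v_p((1+x)^p-1)=v_p(x)+1$ for $v_p(x)>1/(p-1)$. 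This is the same determinant computation as in Corollary~\ref{cor:finite-analytic-length}, organized by the eigenvalues $1+\alpha$ of $\gamma$ rather than by $p$-power roots of unity. The citation is shorter and needs neither $\Z_p$-freeness nor $X^-[\Omega_n]=0$. Iwasawa's theorem holds for any $\Z_p$-extension and works up to pseudo-isomorphism, so finite kernels and cokernels do no harm. Your route is self-contained within the paper and gives an exact length formula at every layer. It also yields an explicit threshold: linearity holds from the first $n$ at which $v_p((1+\alpha)^{p^n}-1)>1/(p-1)$ for every root $\alpha$. It does not avoid Ferrero--Washington, which still enters through Proposition~\ref{prop:freeness}.
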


\begin{proof}
The first formula is the classical Iwasawa growth
formula together with Ferrero--Washington.
The second follows from \eqref{eq:stable-class-number}.
It is also compatible with exact control and
\eqref{eq:determinant}.
\end{proof}

\begin{remark}[Comparison of global objects]
\label{rem:general-theory}
The global objects of \cite{GP15} and
\cite[Theorem 4.4]{Kur21} are built from direct
limits of class groups, through Tate modules or
Pontryagin duals.
The modules used here are norm inverse limits.
We do not identify these global objects.
Such comparisons, including the involution $\#$ of
\cite{Kur21}, require the appropriate duality results
and action conventions.
None of these identifications is needed for the local
decomposition or the control arguments above.
\end{remark}

\section{Frobenius factors and analytic regularization}
\label{sec:analytic}

We identify the local characteristic factors with
regularization factors for Bernoulli measures.
For nonexceptional characters, the classical main
conjecture identifies the ordinary characteristic ideal.
The explicit regularized Bernoulli measures described
by Knospe and Washington \cite{KW21} then realize
its local modification analytically.
For the exceptional character, we use the vanishing
of the ordinary module and the same regularized
integral formula to calculate the polar factor of
the $p$-adic zeta function directly.

\subsection{Local Frobenius factors}

Recall that
\begin{equation}\label{eq:Euler-factor}
\mathcal E_{\ell,\theta}(T)
=1-\ell\theta(\delta_\ell)^{-1}(1+T)^{-z_\ell}
\end{equation}
is the $\theta$-component of
$1-\ell\sigma_\ell^{-1}$, with arithmetic Frobenius.

\begin{proposition}\label{prop:Euler}
For every odd character $\theta$,
\[
(\mathcal E_{\ell,\theta})
=\begin{cases}
(R_\ell),&\epsilon_\ell(\theta)=1,\\
\Lambda,&\epsilon_\ell(\theta)=0.
\end{cases}
\]
In particular,
\[
Y_\ell(\theta)\simeq\Lambda/(\mathcal E_{\ell,\theta}).
\]
\end{proposition}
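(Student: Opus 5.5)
The plan is a direct computation in $\Lambda$, split according to whether $\theta$ is matching for $\ell$; the last assertion then follows from Proposition~\ref{prop:local-presentation}.

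\emph{Nonmatching case.} Suppose $\epsilon_\ell(\theta)=0$. Put $c=\ell\,\theta(\delta_\ell)^{-1}$. Modulo $T$ and modulo $p$ we have $(1+T)^{-z_\ell}\equiv1$, so the constant term of $\mathcal E_{\ell,\theta}$ is congruent to $1-c$. Reducing modulo $p$, $\ell\equiv\omega(\ell)$, so
\[
1-c\equiv 1-\omega(\ell)\theta(\delta_\ell)^{-1}\pmod p.
\]
Both $\omega(\ell)$ and $\theta(\delta_\ell)$ are roots of unity of order prime to $p$. Distinct such roots of unity have distinct reductions modulo $p$, so $1-c$ is a $p$-adic unit. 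An element of $\Lambda$ with unit constant term is a unit, hence $(\mathcal E_{\ell,\theta})=\Lambda$.

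\emph{Matching case.} Suppose $\epsilon_\ell(\theta)=1$. Then $\ell\,\theta(\delta_\ell)^{-1}=\ell/\omega(\ell)=\langle\ell\rangle=u^{z_\ell}$, and so
\[
\mathcal E_{\ell,\theta}=1-u^{z_\ell}(1+T)^{-z_\ell}
=-(1+T)^{-z_\ell}\bigl((1+T)^{z_\ell}-u^{z_\ell}\bigr).
\]
The prefactor is a unit of $\Lambda$. It remains to compare $(1+T)^{z_\ell}-u^{z_\ell}$ with $R_\ell=(1+T)^{s_\ell}-u^{s_\ell}$.

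By Lemma~\ref{lem:decomposition}, $v_p(z_\ell)=a_\ell-1$, so $z_\ell=s_\ell w$ with $w\in\Z_p^\times$. Set $X=(1+T)^{s_\ell}$ and $Y=u^{s_\ell}$. Then
\[
X^w-Y^w=(X-Y)\cdot\frac{X^w-Y^w}{X-Y},
\]
and the task is to show the second factor is a unit of $\Lambda$. I would write it as $Y^{w}\bigl((1+Z)^w-1\bigr)/Z$ with $Z=X/Y-1$. Modulo $p$, $Z\equiv T^{s_\ell}$ because $u\equiv1$ and $s_\ell$ is a power of $p$, so $Z$ lies in the maximal ideal. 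The binomial series gives $\bigl((1+Z)^w-1\bigr)/Z=w+\binom w2Z+\cdots$, which is a well-defined element of $\Lambda$ with constant term the unit $w$. Hence it is a unit, and $(\mathcal E_{\ell,\theta})=(R_\ell)$.

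\emph{The isomorphism.} Proposition~\ref{prop:local-presentation} gives $Y_\ell(\theta)\simeq\Lambda/(R_\ell)$ in the matching case and $0$ otherwise. The ideal equality just proved turns this into $Y_\ell(\theta)\simeq\Lambda/(\mathcal E_{\ell,\theta})$ in both cases, using the convention $\Lambda/(1)=0$.

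The main point needing care is the unit argument for the binomial quotient with a $p$-adic exponent $w$. I would justify it by continuity from positive integers $w$ prime to $p$, or by the explicit series above. The sign and normalization identity $\ell\,\theta(\delta_\ell)^{-1}=u^{z_\ell}$ also has to be checked, using $u^{z_\ell}=\langle\ell\rangle$.
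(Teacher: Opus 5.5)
Your proof is correct and follows the paper's argument: the nonmatching case comes down to the unit constant term, and the matching case writes $z_\ell=s_\ell w$ and expands the binomial series in $Z=(1+T)^{s_\ell}u^{-s_\ell}-1$ (the paper's $W$) to get $Z$ times a unit, then invokes Proposition~\ref{prop:local-presentation}. There are two harmless slips, both off by units: the factorization should read $\mathcal E_{\ell,\theta}=+(1+T)^{-z_\ell}\bigl((1+T)^{z_\ell}-u^{z_\ell}\bigr)$, and $(X^w-Y^w)/(X-Y)$ equals $Y^{w-1}\bigl((1+Z)^w-1\bigr)/Z$ rather than $Y^{w}\bigl((1+Z)^w-1\bigr)/Z$.
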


\begin{proof}
For a nonmatching character, the constant term
$1-\ell\theta(\delta_\ell)^{-1}$ is a unit.

In the matching case, write $z_\ell=s_\ell v$,
where $v\in\Z_p^\times$, and put
\[
W=\frac{(1+T)^{s_\ell}}{u^{s_\ell}}-1\in(p,T).
\]
Then
\[
\mathcal E_{\ell,\theta}
=1-(1+W)^{-v}
=W\bigl(v+O(W)\bigr).
\]
The second factor is a unit and
$R_\ell=u^{s_\ell}W$.
The presentation follows from
Proposition~\ref{prop:local-presentation}.
\end{proof}

\begin{corollary}\label{cor:theta-characteristic}
For every odd $\theta$,
\[
\charideal_\Lambda(X_\Sigma(\theta))
=\charideal_\Lambda(X(\theta))
\left(
\frac{\prod_{\ell\in\Sigma}\mathcal E_{\ell,\theta}(T)}
{L(T)^{\mathbf1_{\theta=\omega}}}
\right).
\]
\end{corollary}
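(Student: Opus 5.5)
The plan is to apply the idempotent $e_\theta$ to the exact sequence
\[
0\longrightarrow C_\Sigma\longrightarrow X_\Sigma^-\longrightarrow X^-\longrightarrow0
\]
of Theorem~\ref{thm:correction}. Since $e_\theta$ is integral and taking character components is exact, this gives a short exact sequence
\[
0\longrightarrow C_\Sigma(\theta)\longrightarrow X_\Sigma(\theta)\longrightarrow X(\theta)\longrightarrow0
\]
of $\Lambda$-modules. By Proposition~\ref{prop:freeness}, all three terms are finite free over $\Z_p$. The characteristic ideal of each is therefore generated by the determinant $\Phi$ from the discussion preceding Corollary~\ref{cor:characteristic}. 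These determinants multiply in short exact sequences: choosing a $\Z_p$-basis of $X_\Sigma(\theta)$ adapted to $C_\Sigma(\theta)$ makes the matrix of $\gamma$ block upper triangular. Hence
\[
\charideal_\Lambda(X_\Sigma(\theta))=\charideal_\Lambda(C_\Sigma(\theta))\,\charideal_\Lambda(X(\theta)),
\]
and it remains to compute $\charideal_\Lambda(C_\Sigma(\theta))$ as the ideal generated by $\prod_\ell\mathcal E_{\ell,\theta}/L^{\mathbf 1_{\theta=\omega}}$.

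For odd $\theta\ne\omega$, Proposition~\ref{prop:quotient-proof} gives $C_\Sigma(\theta)\simeq\bigoplus_{\epsilon_\ell(\theta)=1}\Lambda/(R_\ell)$. Each $R_\ell$ is distinguished, so its characteristic ideal is $\prod_{\epsilon_\ell(\theta)=1}(R_\ell)$. Proposition~\ref{prop:Euler} shows $(\mathcal E_{\ell,\theta})=(R_\ell)$ in the matching case and $(\mathcal E_{\ell,\theta})=\Lambda$ otherwise. So this product equals $\bigl(\prod_{\ell\in\Sigma}\mathcal E_{\ell,\theta}\bigr)$, as required.

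For $\theta=\omega$, every $\ell$ is matching, since $\omega(\delta_\ell)=\omega(\ell)$. By \eqref{eq:omega-quotient}, the characteristic ideal of $C_\Sigma(\omega)$ is $(S_{\ell_0})\prod_{\ell\ne\ell_0}(R_\ell)$. Because $S_{\ell_0}=R_{\ell_0}/L$, this is the ideal generated by $\prod_{\ell}R_\ell/L$, which equals $\bigl(\prod_\ell\mathcal E_{\ell,\omega}/L\bigr)$ by Proposition~\ref{prop:Euler}.

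The only point needing care is that the quotient by $L$ is a genuine element of $\Lambda$. This holds because $L$ divides $R_{\ell_0}$ in $\Z_p[T]$. The ideal equality is then an equality of principal ideals in the UFD $\Lambda$, once unit factors are discarded.

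An alternative for the $\omega$-component uses the sequence $0\to\Z_p(1)\to Y_\Sigma(\omega)\to C_\Sigma(\omega)\to0$ of finite free $\Z_p$-modules, together with multiplicativity of $\Phi$ and $\Phi_{\Z_p(1)}=L$. This yields the same formula without the explicit decomposition, and I would mention it as a check.

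I expect no serious obstacle. The substantive inputs, namely $\Z_p$-freeness (which relies on Ferrero--Washington) and the local presentation, are already available. The main bookkeeping risk is matching the characteristic ideal computed via $\Phi$ with that of the cyclic modules $\Lambda/(f)$ for distinguished $f$. This follows because $\Phi_{\Lambda/(f)}=f$ when $f$ is distinguished, by the usual companion-matrix argument.
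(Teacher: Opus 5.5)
Your proof is correct, but your main route differs from the paper's. The paper takes $\theta$-components of the four-term sequence \eqref{eq:inverse-sequence} and uses multiplicativity in the form
\[
\charideal_\Lambda(X_\Sigma(\theta))\,\charideal_\Lambda(\Z_p(1)(\theta))
=\charideal_\Lambda(Y_\Sigma(\theta))\,\charideal_\Lambda(X(\theta)).
\]
It then reads off $\charideal_\Lambda(Y_\Sigma(\theta))=\bigl(\prod_{\ell\in\Sigma}\mathcal E_{\ell,\theta}\bigr)$ directly from the presentation $Y_\ell(\theta)\simeq\Lambda/(\mathcal E_{\ell,\theta})$ of Proposition~\ref{prop:Euler}. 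The Tate module contributes $(L(T))$ only for $\theta=\omega$. This is exactly the check you mention at the end.

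Your main argument instead goes through $C_\Sigma$ and its explicit cyclic decomposition in Proposition~\ref{prop:quotient-proof}. It therefore relies on the triangular change of coordinates and the choice of $\ell_0$ with $a_{\ell_0}$ minimal. The characteristic ideal does not need that finer structural input, but your route does show which single block absorbs the factor $L$.

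You also invoke $\Z_p$-freeness, and hence Ferrero--Washington, in order to compute characteristic ideals via $\Phi$. Multiplicativity of characteristic ideals holds for arbitrary finitely generated torsion $\Lambda$-modules. So the identity itself does not depend on $\mu=0$, along either route.
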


\begin{proof}
Take character components in
\eqref{eq:inverse-sequence}.
The Tate module contributes only for $\theta=\omega$,
with characteristic ideal $(L(T))$.
Use multiplicativity and Proposition~\ref{prop:Euler}.
For $\theta=\omega$, every local factor is divisible
by $L(T)$, so the quotient is integral.
\end{proof}

\subsection{Normalization of the classical main conjecture}

For an even Dirichlet character $\chi$ of conductor
dividing $p$, let $L_p(s,\chi)$ denote the
Kubota--Leopoldt $p$-adic $L$-function with
interpolation convention
\[
L_p(1-k,\chi)
=
\bigl(1-(\chi\omega^{-k})(p)p^{k-1}\bigr)
L(1-k,\chi\omega^{-k})
\qquad(k\geq1).
\]
Here $\chi\omega^{-k}$ is regarded as its associated
primitive Dirichlet character when its value at $p$
and its complex $L$-function are taken.
In particular, for even $k\geq2$,
\[
L_p(1-k,\omega^k)
=(1-p^{k-1})\zeta(1-k).
\]

For odd $\theta\ne\omega$, put
\[
\chi_\theta=\omega\theta^{-1}.
\]
This is a nontrivial even character of conductor $p$.
The classical power-series realization of the
Kubota--Leopoldt $p$-adic $L$-function gives a unique
series $F_\theta(T)\in\Lambda$ satisfying
\begin{equation}\label{eq:analytic-normalization}
F_\theta(u^s-1)=L_p(s,\chi_\theta),
\qquad s\in\Z_p.
\end{equation}
Its existence and integrality belong to the classical
construction of the $p$-adic $L$-function and do not
require the main conjecture.
In the norm inverse-limit convention used here,
the classical main conjecture gives the further identity
\begin{equation}\label{eq:MW}
\charideal_\Lambda(X(\theta))=(F_\theta).
\end{equation}
See \cite{MW84}, \cite[Chapters 13 and 15]{Washington97},
and \cite[Section 10.3]{NT3}.

Suppose instead that the analytic series is written
as $f(V,\chi)$, normalized by
\[
f(u^{1-s}-1,\chi)=L_p(s,\chi).
\]
Then
\begin{equation}\label{eq:change-variable}
F_\theta(T)
=f\bigl(u(1+T)^{-1}-1,\omega\theta^{-1}\bigr).
\end{equation}
Thus the appearance of $u(1+T)^{-1}-1$ is a
change of analytic coordinate.

To specify the inverse-character convention, let
$\psi:\Gamma\to\overline{\Q}_p^\times$ be a
continuous character of finite order.
Extend it to $\Z_p^\times$ by
\[
\psi(a)
=\psi\bigl((\kappa|_\Gamma)^{-1}(\langle a\rangle)\bigr).
\]
This extension is trivial on the Teichm\"uller
subgroup.
The product $\theta^{-1}\psi^{-1}$ is interpreted
as its associated primitive Dirichlet character.
With these conventions,
\begin{equation}\label{eq:finite-order-normalization}
F_\theta(\psi(\gamma)-1)
=L(0,\theta^{-1}\psi^{-1}).
\end{equation}
In particular,
\[
F_\theta(0)=L(0,\theta^{-1}).
\]
A direct verification of
\eqref{eq:finite-order-normalization} from the
regularized measures is given after
Proposition~\ref{prop:KW}.
The value at $T=0$ alone would not determine the
Iwasawa involution.

\begin{corollary}\label{cor:MW-ray}
For every odd $\theta\ne\omega$,
\[
\charideal_\Lambda(X_\Sigma(\theta))
=\left(
F_\theta(T)\prod_{\ell\in\Sigma}
\mathcal E_{\ell,\theta}(T)
\right).
\]
\end{corollary}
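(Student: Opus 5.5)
This follows directly by combining Corollary~\ref{cor:theta-characteristic} with the classical main conjecture \eqref{eq:MW}. Fix an odd character $\theta\ne\omega$. Then $\mathbf 1_{\theta=\omega}=0$, so Corollary~\ref{cor:theta-characteristic} gives
\[
\charideal_\Lambda(X_\Sigma(\theta))
=\charideal_\Lambda(X(\theta))
\Bigl(\prod_{\ell\in\Sigma}\mathcal E_{\ell,\theta}(T)\Bigr).
\]
This relation has a concrete source. Take the $\theta$-component of the exact sequence of Theorem~\ref{thm:correction}; component-taking is exact, and $\Z_p(1)$ has character $\omega$. This yields
\[
0\to\bigoplus_{\ell}Y_\ell(\theta)\to X_\Sigma(\theta)\to X(\theta)\to0.
\]
By Proposition~\ref{prop:Euler}, $Y_\ell(\theta)\simeq\Lambda/(\mathcal E_{\ell,\theta})$. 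Multiplicativity of characteristic ideals in short exact sequences of finitely generated torsion $\Lambda$-modules then gives the displayed identity. All modules involved are torsion, since they are $\Z_p$-free of finite rank by Proposition~\ref{prop:freeness}.

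Next substitute \eqref{eq:MW}, namely $\charideal_\Lambda(X(\theta))=(F_\theta)$. A product of principal ideals in $\Lambda$ is generated by the product of the generators, so the right-hand side becomes $\bigl(F_\theta\prod_{\ell\in\Sigma}\mathcal E_{\ell,\theta}\bigr)$, which is the claim. Nonmatching primes need no separate case: for them $\mathcal E_{\ell,\theta}$ is a unit, so including them does not change the ideal.

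The only step that is not formal is normalization. The main conjecture must be invoked in the same variable as the one used here: the norm inverse-limit $X^-$, with $\gamma=1+T$ acting through arithmetic Galois action, and $F_\theta$ normalized by \eqref{eq:analytic-normalization}. Otherwise one would get $F_\theta$ composed with the involution $T\mapsto u(1+T)^{-1}-1$, as in \eqref{eq:change-variable}. I take this compatibility as given by the paper's statement of \eqref{eq:MW}, with references to Mazur--Wiles and Washington. To check it, I would compare the characteristic ideals at finite-order characters using \eqref{eq:finite-order-normalization}; in particular $F_\theta(0)=L(0,\theta^{-1})$ should match the $p$-adic size of $A_0(\theta)$ predicted by the analytic class number formula.
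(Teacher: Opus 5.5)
Your proposal is correct and follows the paper's own proof, which is exactly the combination of \eqref{eq:MW} with Corollary~\ref{cor:theta-characteristic}. Your unpacking of that corollary reproduces how the paper proves it: you take $\theta$-components of the inverse-limit sequence, note that $\Z_p(1)$ only affects $\omega$, and use $Y_\ell(\theta)\simeq\Lambda/(\mathcal E_{\ell,\theta})$ from Proposition~\ref{prop:Euler}. Like the paper, you treat the normalization of \eqref{eq:MW} as an external input rather than deriving it.
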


\begin{proof}
Combine \eqref{eq:MW} with
Corollary~\ref{cor:theta-characteristic}.
\end{proof}

\subsection{Regularized Bernoulli measures}

For $a\in\Z_p^\times$, put
\[
\alpha(a)=\frac{\log\langle a\rangle}{\log u}\in\Z_p.
\]
Let $E_1$ be the Bernoulli distribution on $\Z_p$,
defined by
\[
E_1(a+p^N\Z_p)=\frac{a}{p^N}-\frac12,
\qquad 0\leq a<p^N.
\]
This distribution is not bounded.
For an integer $c>1$ prime to $p$, let
$\tau_c:\Z_p\to\Z_p$ be the homeomorphism
$\tau_c(x)=cx$.
For a distribution $\mu$, its pushforward is
\[
((\tau_c)_*\mu)(U)=\mu(\tau_c^{-1}U)
\]
for compact open subsets $U$ of $\Z_p$.
Define
\[
\rho_c=1-c\,(\tau_c)_*,
\qquad
E_{1,c}=\rho_cE_1.
\]
Explicitly,
\[
E_{1,c}(a+p^N\Z_p)
=E_1(a+p^N\Z_p)-c\,E_1(c^{-1}a+p^N\Z_p),
\]
where $c^{-1}a+p^N\Z_p$ denotes the inverse-image
coset under $\tau_c$.

The distribution $E_{1,c}$ is a bounded
$\Z_p$-valued measure.
More precisely, the explicit formula in
\cite[Theorem 3.1]{KW21} is
\begin{equation}\label{eq:measure-values}
E_{1,c}(a+p^N\Z_p)
=\frac{c-1}{2}
-\{-a(p^N)^{-1}\}_c,
\end{equation}
where $\{x\}_c$ denotes the representative of a
residue class modulo $c$ in $\{0,\ldots,c-1\}$.
The inverse $(p^N)^{-1}$ is taken modulo $c$.
This formula also follows directly by writing
$cb=a+tp^N$, where $0\leq b<p^N$ and
$0\leq t<c$.

For every odd $\theta$, including $\omega$, set
\begin{equation}\label{eq:measure-series}
\mathscr L_{c,\theta}(T)
=-\int_{\Z_p^\times}
\theta(a)^{-1}(1+T)^{-\alpha(a)}\,dE_{1,c}(a).
\end{equation}
The integral is coefficientwise.
Its coefficients lie in $\Z_p$, since the
binomial-coefficient functions are continuous and
$\Z_p$-valued.
Thus $\mathscr L_{c,\theta}\in\Lambda$.

For every $t\in\overline{\Q}_p$ with $|t|_p<1$,
the series
\[
(1+t)^{-\alpha(a)}
=\sum_{r\geq0}\binom{-\alpha(a)}r t^r
\]
converges uniformly for $a\in\Z_p^\times$,
because all its binomial coefficients are
$p$-adically integral.
The values lie in the finite extension $\Q_p(t)$.
Since $E_{1,c}$ is bounded, specialization at
$T=t$ commutes with integration.
The same argument applies to the other bounded
measures used below.

Write
\[
z_c=\alpha(c),\qquad
\mathcal E_{c,\theta}(T)
=1-c\theta(c)^{-1}(1+T)^{-z_c}.
\]
For prime $c=\ell$, this agrees with
\eqref{eq:Euler-factor}.

\begin{proposition}[Knospe--Washington comparison]
\label{prop:KW}
For odd $\theta\ne\omega$,
\begin{equation}\label{eq:KW-identity}
\mathscr L_{c,\theta}(T)
=\mathcal E_{c,\theta}(T)F_\theta(T).
\end{equation}
Equivalently,
\begin{equation}\label{eq:KW-specialization}
\mathscr L_{c,\theta}(u^s-1)
=\bigl(1-\chi_\theta(c)\langle c\rangle^{1-s}\bigr)
L_p(s,\chi_\theta).
\end{equation}
For $\theta=\omega$, the same specialization holds
with $L_p(s,1)$ when $s\ne1$, and the regularized
product extends across $s=1$.
\end{proposition}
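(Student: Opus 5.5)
The plan is to prove the specialized identity \eqref{eq:KW-specialization} first, and then deduce the power-series identity \eqref{eq:KW-identity} by a uniqueness argument in $\Lambda$. For $s\in\Z_p$, put $t=u^s-1$. Since $u^{\alpha(a)}=\langle a\rangle$, we have
\[
(1+t)^{-\alpha(a)}=\langle a\rangle^{-s}.
\]
By the remark preceding the proposition, specialization commutes with integration against the bounded measure $E_{1,c}$. Hence
\[
\mathscr L_{c,\theta}(u^s-1)=-\int_{\Z_p^\times}\theta(a)^{-1}\langle a\rangle^{-s}\,dE_{1,c}(a).
\]
Using $\theta^{-1}=\chi_\theta\omega^{-1}$ and $\omega(a)^{-1}=\langle a\rangle a^{-1}$, the integrand becomes $\chi_\theta(a)\,a^{-1}\langle a\rangle^{1-s}$. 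This is exactly the integrand in the classical regularized integral representation \cite[Equation (2)]{KW21} (equivalently, Washington's Theorem~12.2):
\[
\int_{\Z_p^\times}\chi(a)a^{-1}\langle a\rangle^{1-s}\,dE_{1,c}
=-\bigl(1-\chi(c)\langle c\rangle^{1-s}\bigr)L_p(s,\chi).
\]
Substituting $\chi=\chi_\theta$ gives \eqref{eq:KW-specialization}.

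Because sign and normalization conventions differ between sources, I will not simply quote the integral representation. Instead I will verify it directly on a dense set of $s$, namely $s=1-k$ with $k\geq1$ and $k\equiv0\pmod{p-1}$ (more generally, $k$ in a fixed residue class so that the twist $\chi\omega^{-k}$ is fixed). At such points the integrand equals $\chi\omega^{-k}(a)\,a^{k-1}$, a locally polynomial function. The steps are as follows.
\begin{enumerate}[label=(\roman*)]
\item The unregularized moments satisfy $\int_{\Z_p}\psi(x)x^{k-1}\,dE_1=-L(1-k,\psi)$, up to the standard sign. This follows from the distribution relation for $E_1$ and the Bernoulli-polynomial identity $B_k(x)$; the measure values in \eqref{eq:measure-values} fix the normalization.
\item The operator $\rho_c=1-c(\tau_c)_*$ multiplies the $k$-th moment of the twisted integrand by $1-\psi(c)c^{k}$. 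Here I use that $\tau_c$ preserves both $\Z_p^\times$ and $p\Z_p$.
\item Restricting from $\Z_p$ to $\Z_p^\times$ removes the $p\Z_p$ part. This produces the Euler factor $1-\psi(p)p^{k-1}$ appearing in the interpolation convention.
\item Since $c^k=\omega^k(c)\langle c\rangle^k$, the product $\psi(c)c^k$ equals $\chi(c)\langle c\rangle^{1-s}$ at $s=1-k$.
\end{enumerate}
Both sides of \eqref{eq:KW-specialization} are continuous in $s$, so the identity extends to all $s\in\Z_p$.

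To pass to power series, note that both $\mathscr L_{c,\theta}$ and $\mathcal E_{c,\theta}F_\theta$ lie in $\Lambda$. Using $\kappa(\sigma)$ conventions, $\mathcal E_{c,\theta}(u^s-1)=1-c\,\theta(c)^{-1}\langle c\rangle^{-s}$, and this equals $1-\chi_\theta(c)\langle c\rangle^{1-s}$ because $c\,\theta(c)^{-1}=\chi_\theta(c)\langle c\rangle$. So the two series agree at all points $u^s-1$, $s\in\Z_p$. Their difference therefore has infinitely many zeros in the open unit disc, and by Weierstrass preparation it vanishes.

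For $\theta=\omega$, the character $\chi$ is trivial. The same moment computation gives \eqref{eq:KW-specialization} with $L_p(s,1)$ for $s\neq1$. The left side is an element of $\Lambda$, so its value is continuous at $s=1$. On the right, $L_p(s,1)$ has a simple pole at $s=1$ while $1-\langle c\rangle^{1-s}$ has a simple zero there, so the regularized product extends across $s=1$. The main obstacle is the moment bookkeeping in the verification above, specifically the consistent sign and the direction of the twist $\theta^{-1}$ versus $\chi_\theta\omega^{-k}$. I would settle these first by checking the case $k=1$, $s=0$ against $F_\theta(0)=L(0,\theta^{-1})$.
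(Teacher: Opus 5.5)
Your main line is the paper's proof. You specialize at $T=u^s-1$, rewrite the integrand as $\chi_\theta\omega^{-1}(a)\langle a\rangle^{-s}$, and invoke \cite[Equation (2)]{KW21}. You then check $\mathcal E_{c,\theta}(u^s-1)=1-\chi_\theta(c)\langle c\rangle^{1-s}$ and pass to \eqref{eq:KW-identity} by the identity principle in $\Lambda$. For $\theta=\omega$, the zero of the regularization factor absorbs the pole. If you keep the citation, as the paper does, the argument is complete.

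The self-contained verification you want to substitute for the citation has a real gap, and it is not a matter of signs. In (i) and (ii) you treat $\int_{\Z_p}\psi(x)x^{k-1}\,dE_1$ as a moment that $\rho_c$ multiplies by $1-\psi(c)c^k$. Since $E_1$ is unbounded, this integral is not defined for $x^{k-1}$ when $k\geq2$: the Riemann sums depend on the choice of coset representatives. With representatives in $[0,p^N)$ they converge, for $\psi=1$, to $B_k$ rather than to $-\zeta(1-k)=B_k/k$. Pushing that value through (ii) gives $(1-c^k)B_k$, whereas the bounded measure actually gives $\int_{\Z_p}x^{k-1}\,dE_{1,c}=(1-c^k)B_k/k$. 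For $k=c=2$, the formula $E_{1,2}(a+p^N\Z_p)=(-1)^a/2$ gives $-1/4$, not $-1/2$.

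What is missing is the comparison of measures $x^{k-1}\,dE_{1,c}=dE_{k,c}$. Here $E_k(a+p^N\Z_p)=p^{N(k-1)}k^{-1}B_k(a/p^N)$ for $0\leq a<p^N$, with $B_k(x)$ the Bernoulli polynomial, and $E_{k,c}=E_k-c^k(\tau_c)_*E_k$. The comparison follows from a congruence between $E_{k,c}(a+p^N\Z_p)$ and $a^{k-1}E_{1,c}(a+p^N\Z_p)$, modulo a power of $p$ that tends to infinity with $N$. Only after this step do the distribution relation for $E_k$ and the locally constant twist $\psi$ produce the factors $1-\psi(c)c^k$ and $1-\psi(p)p^{k-1}$ and the value $B_{k,\psi}/k$.

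Your proposed sanity check at $k=1$ cannot detect the problem. The discrepancy is a factor of $k$, and at $k=1$ the integrand is locally constant, so the unregularized integral is well defined there.
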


\begin{proof}
The classical integral representation recalled in
\cite[Equation (2)]{KW21} is
\[
-\bigl(1-\chi(c)\langle c\rangle^{1-s}\bigr)L_p(s,\chi)
=\int_{\Z_p^\times}
\chi\omega^{-1}(a)\langle a\rangle^{-s}\,dE_{1,c}(a).
\]
Take $\chi=\omega\theta^{-1}$.
Then
\[
\mathcal E_{c,\theta}(u^s-1)
=1-c\theta(c)^{-1}u^{-sz_c}
=1-\chi_\theta(c)\langle c\rangle^{1-s}.
\]
This proves the specialization formula.

For $\theta\ne\omega$, both sides of
\eqref{eq:KW-identity} are in $\Lambda$.
Their values agree at points with an accumulation
point inside the open $p$-adic unit disc, so the
power series agree.
For $\theta=\omega$, the bounded-measure integral
supplies the extension at $s=1$.
\end{proof}

\begin{remark}[Finite-order specialization]
\label{rem:finite-order-check}
We verify \eqref{eq:finite-order-normalization}.
For $\theta\ne\omega$, choose an integer $c>1$
prime to $p$ such that $\theta(c)\ne\omega(c)$.
Then $\mathcal E_{c,\theta}$ has unit constant term
and is a unit of $\Lambda$.

Let $\psi$ be as in
\eqref{eq:finite-order-normalization}, and put
$\eta=\theta^{-1}\psi^{-1}$.
Since $\langle a\rangle=u^{\alpha(a)}$, the extension
of $\psi$ to $\Z_p^\times$ satisfies
\[
\psi(a)=\psi(\gamma)^{\alpha(a)}.
\]
Consequently, at $T=\psi(\gamma)-1$, the integrand
in \eqref{eq:measure-series} becomes $\eta(a)$.
This is a locally constant nontrivial odd character.

Let $f_\eta$ be its conductor.
Define the generalized Bernoulli number
\[
B_{1,\eta}
=\sum_{a=1}^{f_\eta}
\eta(a)\left(\frac{a}{f_\eta}-\frac12\right)
=\frac1{f_\eta}\sum_{a=1}^{f_\eta}a\,\eta(a),
\]
where $\eta$ is extended by zero to integers not
prime to $f_\eta$.
The second equality uses
$\sum_{a=1}^{f_\eta}\eta(a)=0$.
The standard special-value formula is
\[
L(0,\eta)=-B_{1,\eta}.
\]

For $N$ sufficiently large that $\eta$ factors
through $(\Z/p^N\Z)^\times$, the distribution
relation gives
\[
\sum_{\substack{1\leq a<p^N\\p\nmid a}}
\eta(a)\left(\frac{a}{p^N}-\frac12\right)
=B_{1,\eta}.
\]
This is a finite sum for the locally constant
function $\eta$; it does not require integrating
an arbitrary continuous function against the
unbounded distribution $E_1$.
Regularizing the finite sum therefore gives
\[
-\int_{\Z_p^\times}\eta(a)\,dE_{1,c}(a)
=-(1-c\eta(c))B_{1,\eta}
=(1-c\eta(c))L(0,\eta).
\]
On the other hand,
\[
\mathcal E_{c,\theta}(\psi(\gamma)-1)=1-c\eta(c).
\]
Evaluating Proposition~\ref{prop:KW} and cancelling
this unit proves
\[
F_\theta(\psi(\gamma)-1)=L(0,\theta^{-1}\psi^{-1}).
\]
\end{remark}

For a prime $\ell$, the specialization can also be
computed directly from \eqref{eq:measure-values}:
\[
\mathscr L_{\ell,\theta}(u^s-1)
=-\lim_{N\to\infty}
\sum_{\substack{1\leq a<p^N\\p\nmid a}}
\theta(a)^{-1}\langle a\rangle^{-s}
\left(
\frac{\ell-1}{2}
-\{-a(p^N)^{-1}\}_\ell
\right).
\]

\subsection{The exceptional character}

\begin{proposition}\label{prop:exceptional-analytic}
There is a unit $G(T)\in\Lambda^\times$ such that
\[
L_p(s,1)=\frac{G(u^s-1)}{1-u^{1-s}}
\qquad(s\in\Z_p,\ s\ne1).
\]
For every prime $\ell\ne p$,
\begin{equation}\label{eq:exceptional-measure}
\mathscr L_{\ell,\omega}(T)
=(1+T)G(T)\frac{\mathcal E_{\ell,\omega}(T)}{L(T)}.
\end{equation}
Consequently,
\[
(\mathscr L_{\ell,\omega})=(S_\ell).
\]
These assertions do not use the main conjecture.
\end{proposition}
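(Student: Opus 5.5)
I would define $G$ directly from the regularized measure, so that no main conjecture is needed. The plan is to choose an auxiliary integer $c>1$ prime to $p$ and form $\mathscr L_{c,\omega}\in\Lambda$ from \eqref{eq:measure-series}. The specialization part of Proposition~\ref{prop:KW} for $\theta=\omega$ gives, for $s\ne1$,
\[
\mathscr L_{c,\omega}(u^s-1)=\bigl(1-\langle c\rangle^{1-s}\bigr)L_p(s,1),
\]
since $\chi_\omega$ is trivial. Writing $\langle c\rangle=u^{z_c}$, the factor becomes $1-u^{z_c(1-s)}$, and
\[
\mathcal E_{c,\omega}(T)=1-u\,\omega(c)\omega(c)^{-1}\langle c\rangle (1+T)^{-z_c}\cdot u^{-1}\!,
\]
i.e. $\mathcal E_{c,\omega}(T)=1-u^{z_c}(1+T)^{-z_c}$. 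This vanishes at $T=u-1$, which is the zero of $L(T)$. The quotient $\mathcal E_{c,\omega}(T)/L(T)$ lies in $\Lambda$ by the argument of Proposition~\ref{prop:Euler}. It is a unit times $z_c$ divided by an appropriate constant, so it is a unit exactly when $z_c$ is a $p$-adic unit, that is, when $a_c=1$ in the sense of $v_p(\langle c\rangle-1)=1$.

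I would therefore choose $c$ prime with $\langle c\rangle$ a topological generator of $1+p\Z_p$; by Dirichlet such primes exist (e.g. $c\equiv u\pmod{p^2}$ suitably). For this $c$, $\mathcal E_{c,\omega}=L(T)\cdot U_c(T)$ with $U_c\in\Lambda^\times$, up to the factor $(1+T)^{-1}$ coming from $(1+T)^{-z_c}$. Concretely, I expect the exact identity $1-u^{s z}(\ldots)$ to produce $(1+T)^{-1}L(T)$ when $z_c=1$. I then set
\[
G(T)=(1+T)\,\mathscr L_{c,\omega}(T)\,\frac{L(T)}{\mathcal E_{c,\omega}(T)}\in\Lambda .
\]
At $T=u^s-1$ we have $L(u^s-1)=u^s-u$, and comparing with $1-u^{1-s}=-(u-u^s)u^{-s}\cdot(-1)$ gives $L_p(s,1)=G(u^s-1)/(1-u^{1-s})$, once the $(1+T)=u^s$ factor absorbs $u^{-s}$. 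That $G$ is a unit I would check from its value at $T=0$. There $G(0)$ is a unit times $\mathscr L_{c,\omega}(0)/(1-u)$, and evaluating via the residue formula \eqref{eq:measure-values} at $s=0$ gives $(1-c\,\omega^{-1}(c)\omega(c))$ times $L_p(0,1)=-(1-p^{-1})\cdot\ldots$. More cleanly, the classical residue $\lim_{s\to1}(s-1)L_p(s,1)=1-1/p$ gives $G(u-1)$ a $p$-adic unit multiple of $(1-1/p)\log u$, which is a unit since $v_p(\log u)=1$. Evaluating at the single point $T=u-1$ in the maximal ideal suffices, since a power series with unit value at a point of the open disc has unit constant term.

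Once $G$ is fixed, formula \eqref{eq:exceptional-measure} for an arbitrary prime $\ell$ follows by the identity principle. Both $\mathscr L_{\ell,\omega}$ and $(1+T)G\,\mathcal E_{\ell,\omega}/L$ lie in $\Lambda$, the latter by Corollary~\ref{cor:theta-characteristic}/Proposition~\ref{prop:Euler}. They agree at $T=u^s-1$ for all $s\ne1$, by Proposition~\ref{prop:KW} and the formula for $L_p(s,1)$, so they agree as power series. Then Proposition~\ref{prop:Euler} gives $(\mathcal E_{\ell,\omega})=(R_\ell)$, and since $G$ and $1+T$ are units, $(\mathscr L_{\ell,\omega})=(R_\ell/L)=(S_\ell)$.

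The main obstacle is the unit assertion for $G$: the bookkeeping of the normalizations $u^{\pm s}$, the $(1+T)$ factor, and the exact residue of $L_p(s,1)$ at $s=1$. I would first try to verify it directly at $T=u-1$ using the bounded-measure integral, where $\mathscr L_{c,\omega}(u-1)=-\int_{\Z_p^\times}\omega(a)^{-1}\langle a\rangle^{-1}dE_{1,c}$. I would compare this with $\lim_{s\to1}(1-\langle c\rangle^{1-s})L_p(s,1)=(\log\langle c\rangle)(1-1/p)$, which is a unit times $p$. Dividing by the derivative of $\mathcal E_{c,\omega}$ at that point, which also has valuation $1$, should leave a unit.
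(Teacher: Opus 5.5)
Your strategy is the paper's: pick an auxiliary $c$ with $z_c\in\Z_p^\times$, write $\mathcal E_{c,\omega}=L\,U_c$ with $U_c\in\Lambda^\times$, build $G$ from $\mathscr L_{c,\omega}$, show $G$ is a unit, and get \eqref{eq:exceptional-measure} for every $\ell$ by the identity principle. The paper takes $c=1+p$. Proposition~\ref{prop:KW} holds for any integer $c>1$ prime to $p$, so you do not need Dirichlet's theorem. There is, however, a concrete error in your definition of $G$: the factor $1+T$ is on the wrong side. Since $L(u^s-1)=u^s-u=u^s(1-u^{1-s})$, your $G=(1+T)\mathscr L_{c,\omega}L/\mathcal E_{c,\omega}$ gives
\[
G(u^s-1)=u^{2s}\bigl(1-u^{1-s}\bigr)L_p(s,1).
\]
So the first identity fails for $s\neq0$. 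Your identity-principle step would then put $(1+T)^2\mathscr L_{\ell,\omega}$, not $\mathscr L_{\ell,\omega}$, on one side of \eqref{eq:exceptional-measure}. The factor $1+T=u^s$ does not absorb $u^{-s}$; it compounds it. Your intermediate expression $-(u-u^s)u^{-s}\cdot(-1)$ also equals $u^{1-s}-1$, which has the wrong sign. The correct normalization, which is the paper's, is
\[
G=\frac{L\,\mathscr L_{c,\omega}}{(1+T)\,\mathcal E_{c,\omega}}=\frac{\mathscr L_{c,\omega}}{(1+T)\,U_c}.
\]
In the case $z_c=1$ that you considered, $\mathcal E_{c,\omega}=L/(1+T)$, so $G=\mathscr L_{c,\omega}$. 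Because $1+T$ is a unit, the error affects neither the unit property of $G$ nor the conclusion $(\mathscr L_{\ell,\omega})=(S_\ell)$. It does invalidate both displayed identities as you wrote them.

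With $G$ corrected, your unit check through the residue is valid. One has
\[
G(u-1)=\lim_{s\to1}\bigl(1-u^{1-s}\bigr)L_p(s,1)=(1-1/p)\log u,
\]
which has valuation $0$, and $G(0)\equiv G(u-1)\pmod p$. This route imports the classical residue of $L_p(s,1)$ at $s=1$, which the paper never uses. The paper instead evaluates at $T=0$, where
\[
G(0)=(1-u)L_p(0,1)=-(1-u)B_{1,\omega^{-1}}.
\]
There $v_p(B_{1,\omega^{-1}})=-1$ follows from $a\,\omega^{-1}(a)\equiv1\pmod p$, so only the interpolation convention already fixed in the paper is needed. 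Your abandoned $T=0$ computation and your final paragraph contain valuation slips. For instance, when $z_c$ is a unit, $(1-1/p)\log\langle c\rangle$ is a unit, not a unit times $p$. Likewise, the $T$-derivative of $\mathcal E_{c,\omega}$ at $u-1$ is $z_c/u$, which is also a unit. Neither passage is needed once the residue argument is in place.
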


\begin{proof}
We first prove integrality of the numerator.
Take $c=1+p$.
Then $z_c=\log(1+p)/\log u$ is a $p$-adic unit.
The calculation in Proposition~\ref{prop:Euler},
applied to this integer $c$, shows that
\[
\mathcal E_{c,\omega}(T)=L(T)U_c(T)
\]
for a unit $U_c(T)\in\Lambda^\times$.
Define
\[
G(T)=
\frac{\mathscr L_{c,\omega}(T)}
{(1+T)U_c(T)}\in\Lambda.
\]
By Proposition~\ref{prop:KW}, for $T=u^s-1$ and
$s\ne1$,
\[
G(T)=\frac{L(T)}{1+T}L_p(s,1)
=(1-u^{1-s})L_p(s,1).
\]

Interpolation at $s=0$ gives
\[
L_p(0,1)=L(0,\omega^{-1})=-B_{1,\omega^{-1}},
\]
where
\[
B_{1,\omega^{-1}}
=\frac1p\sum_{a=1}^{p-1}a\,\omega^{-1}(a).
\]
The numerator is congruent to $p-1$ modulo $p$,
so $v_p(L_p(0,1))=-1$.
Since $v_p(1-u)=1$,
\[
G(0)=(1-u)L_p(0,1)\in\Z_p^\times.
\]
Thus $G\in\Lambda^\times$.

For a prime $\ell$, substitute the formula for
$L_p(s,1)$ into Proposition~\ref{prop:KW}.
This gives \eqref{eq:exceptional-measure} after
specialization.
Both sides are integral power series because
$L\mid\mathcal E_{\ell,\omega}$, so the identity
holds in $\Lambda$.
Finally,
\[
(\mathcal E_{\ell,\omega}/L)=(R_\ell/L)=(S_\ell).
\]
\end{proof}

\subsection{A symmetric measure for several auxiliary primes}

Define
\begin{equation}\label{eq:multi-measure}
E_{1,\Sigma}
=\Bigl(\prod_{\ell\in\Sigma}\rho_\ell\Bigr)E_1,
\end{equation}
where the product denotes composition.
The operators commute, since
\[
(\tau_\ell)_*(\tau_{\ell'})_*
=(\tau_{\ell\ell'})_*
=(\tau_{\ell'})_*(\tau_\ell)_*.
\]
Because $\Sigma\ne\varnothing$, one regularization
already produces a bounded $\Z_p$-valued measure,
and every subsequent operator preserves boundedness.
Thus $E_{1,\Sigma}$ is well defined and symmetric
in $\Sigma$.

For every odd $\theta$, put
\[
\mathscr L_{\Sigma,\theta}(T)
=-\int_{\Z_p^\times}
\theta(a)^{-1}(1+T)^{-\alpha(a)}
\,dE_{1,\Sigma}(a).
\]
For a singleton, this agrees with
$\mathscr L_{\ell,\theta}$.

\begin{theorem}[Regularized characteristic generators]
\label{thm:measure-main}
For every odd $\theta$,
\[
\charideal_\Lambda(X_\Sigma(\theta))
=(\mathscr L_{\Sigma,\theta}).
\]
More explicitly,
\begin{equation}\label{eq:multi-analytic}
\mathscr L_{\Sigma,\theta}
=\begin{cases}
F_\theta\displaystyle\prod_{\ell\in\Sigma}
\mathcal E_{\ell,\theta},
&\theta\ne\omega,\\[6pt]
(1+T)G(T)\displaystyle
\frac{\prod_{\ell\in\Sigma}\mathcal E_{\ell,\omega}}{L(T)},
&\theta=\omega.
\end{cases}
\end{equation}
The nonexceptional case uses Mazur--Wiles.
The exceptional case follows from $X(\omega)=0$
and Proposition~\ref{prop:exceptional-analytic}.
\end{theorem}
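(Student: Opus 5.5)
The plan is to first establish the explicit formula \eqref{eq:multi-analytic} for $\mathscr L_{\Sigma,\theta}$ as an identity in $\Lambda$, and then compare it with the characteristic ideals computed in Corollary~\ref{cor:theta-characteristic} and Corollary~\ref{cor:MW-ray}.

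For the explicit formula, I would show how each regularization operator $\rho_\ell$ transforms the integral. For a bounded (or suitably regularized) measure $\nu$ and the integrand $f_T(a)=\theta(a)^{-1}(1+T)^{-\alpha(a)}$ on $\Z_p^\times$, the change of variables $\tau_\ell$ gives
\[
\int_{\Z_p^\times}f_T\,d(\tau_\ell)_*\nu
=\int_{\Z_p^\times}f_T(\ell a)\,d\nu(a)
=\theta(\ell)^{-1}(1+T)^{-z_\ell}\int_{\Z_p^\times}f_T\,d\nu.
\]
This uses $\tau_\ell(\Z_p^\times)=\Z_p^\times$ and $\alpha(\ell a)=\alpha(\ell)+\alpha(a)$. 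Hence $\rho_\ell$ multiplies the transform by $\mathcal E_{\ell,\theta}(T)$. The unbounded $E_1$ makes this naive computation illegitimate at the first step. I would avoid that by fixing one $\ell_1\in\Sigma$ and writing
\[
E_{1,\Sigma}=\Bigl(\prod_{\ell\ne\ell_1}\rho_\ell\Bigr)E_{1,\ell_1}.
\]
Here $E_{1,\ell_1}$ is bounded, so the multiplicativity applies legitimately to each remaining operator. This gives
\[
\mathscr L_{\Sigma,\theta}=\mathscr L_{\ell_1,\theta}\prod_{\ell\ne\ell_1}\mathcal E_{\ell,\theta}.
\]
I would verify the identity coefficientwise in $T$, or after specialization at all $|t|_p<1$ using the uniform convergence already noted in the paper. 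Substituting Proposition~\ref{prop:KW} for $\theta\ne\omega$, or \eqref{eq:exceptional-measure} for $\theta=\omega$, then yields \eqref{eq:multi-analytic}. In both cases the resulting product is symmetric in $\Sigma$, consistent with the commutation of the $\rho_\ell$.

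For the characteristic ideals, take $\theta\ne\omega$ first. Corollary~\ref{cor:MW-ray}, which rests on the Mazur--Wiles identity \eqref{eq:MW}, gives exactly $(F_\theta\prod_\ell\mathcal E_{\ell,\theta})$, and this equals $(\mathscr L_{\Sigma,\theta})$. For $\theta=\omega$, Corollary~\ref{cor:omega-tower} gives $X(\omega)=0$, so $\charideal_\Lambda(X(\omega))=\Lambda$. Corollary~\ref{cor:theta-characteristic} then gives $\charideal_\Lambda(X_\Sigma(\omega))=(\prod_\ell\mathcal E_{\ell,\omega}/L)$. Since $1+T$ and $G$ are units, this ideal equals $(\mathscr L_{\Sigma,\omega})$.

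The main obstacle is justifying the change-of-variables multiplicativity rigorously, since a single $\rho_\ell$ applied to $E_1$ is only meaningful after regularization. I would handle it at the level of Riemann sums on cosets $a+p^N\Z_p$, where the pushforward simply permutes cosets of $\Z_p^\times$. The key point is that boundedness of $E_{1,\ell_1}$ lets the limits be exchanged. A secondary point is checking that $\theta(\ell)$ in the integrand agrees with $\theta(\delta_\ell)$ in $\mathcal E_{\ell,\theta}$, which holds by the paper's convention that $\theta(a)$ means $\theta(\delta_a)$.
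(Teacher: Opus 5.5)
Your proposal is correct and follows the paper's proof essentially step for step. You reduce to the bounded measure $E_{1,\ell_1}$ and use the change-of-variables multiplicativity to peel off the factors $\mathcal E_{\ell,\theta}$. You then substitute Propositions~\ref{prop:KW} and~\ref{prop:exceptional-analytic}, and conclude with Corollary~\ref{cor:MW-ray} for $\theta\ne\omega$, and with Corollaries~\ref{cor:omega-tower} and~\ref{cor:theta-characteristic} together with the unit $(1+T)G(T)$ for $\theta=\omega$.
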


\begin{proof}
Put
\[
f_{\theta,T}(a)
=\theta(a)^{-1}(1+T)^{-\alpha(a)}.
\]
Multiplicativity gives
\[
f_{\theta,T}(\ell a)
=\theta(\ell)^{-1}(1+T)^{-z_\ell}
f_{\theta,T}(a).
\]
For a bounded measure $\mu$, pushforward therefore gives
\[
\int_{\Z_p^\times}f_{\theta,T}\,d(\rho_\ell\mu)
=
\mathcal E_{\ell,\theta}(T)
\int_{\Z_p^\times}f_{\theta,T}\,d\mu.
\]
All integrals here are coefficientwise.
Starting with any $\ell_1\in\Sigma$, we obtain
\[
\mathscr L_{\Sigma,\theta}
=\mathscr L_{\ell_1,\theta}
\prod_{\ell\in\Sigma\setminus\{\ell_1\}}
\mathcal E_{\ell,\theta}.
\]
Propositions~\ref{prop:KW} and
\ref{prop:exceptional-analytic} give
\eqref{eq:multi-analytic}.

For $\theta\ne\omega$, apply
Corollary~\ref{cor:MW-ray}.
For $\theta=\omega$, use
Corollaries~\ref{cor:omega-tower} and
\ref{cor:theta-characteristic}, together with the fact
that $(1+T)G(T)$ is a unit.
\end{proof}

\begin{remark}[Regularization and the Tate correction]
The factor
\[
1-\chi_\theta(\ell)\langle\ell\rangle^{1-s},
\qquad \chi_\theta=\omega\theta^{-1},
\]
is the regularization factor in the integral
representation recalled in \cite{KW21}.
It is not the factor for omission of an ordinary
Euler factor, which in this normalization is
\[
1-\theta^{-1}(\ell)\langle\ell\rangle^{-s}.
\]
The additional multiplier $\ell$ in the ray class
factor is essential.

For $\theta=\omega$, every regularization factor
vanishes at $s=1$, whereas $L_p(s,1)$ has one pole.
Thus precisely one factor $L(T)$ is removed.
This matches the single diagonal global Tate module,
rather than one Tate module for each auxiliary prime.
\end{remark}

When $\ell=2$, \eqref{eq:measure-values} gives
\[
E_{1,2}(a+p^N\Z_p)=\frac{(-1)^a}{2}.
\]
The alternating, or fermionic, measure is therefore
twice $E_{1,2}$.
This specifies the normalization relating these factors
to the Euler-type constructions in \cite{KH12};
see also \cite[Section 4]{KW21}.

\subsection{Specialization and finite lengths}

\begin{corollary}\label{cor:finite-analytic-length}
For every odd $\theta$ and every $n\geq0$,
\[
\length_{\Z_p}A_{n,\Sigma}(\theta)
=\sum_{\zeta^{p^n}=1}
v_p\bigl(\mathscr L_{\Sigma,\theta}(\zeta-1)\bigr),
\]
where the sum is over all $p^n$-th roots of unity
in $\overline{\Q}_p$.
All values occurring in the sum are nonzero.
\end{corollary}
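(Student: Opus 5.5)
The plan is to combine exact control (Proposition~\ref{prop:ray-control}), the $\Z_p$-freeness of $X_\Sigma^-$ (Proposition~\ref{prop:freeness}), and Theorem~\ref{thm:measure-main}. Taking the $e_\theta$-component of the control isomorphism, which is exact, gives
\[
A_{n,\Sigma}(\theta)\simeq X_\Sigma(\theta)/\Omega_nX_\Sigma(\theta).
\]
Here $M:=X_\Sigma(\theta)$ is a direct summand of the free module $X_\Sigma^-$, so it is finite free over $\Z_p$, of rank $d$ say. By the matrix presentation preceding Corollary~\ref{cor:characteristic}, its characteristic ideal is $(\Phi_M)$ with $\Phi_M$ distinguished. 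Theorem~\ref{thm:measure-main} then gives $\mathscr L_{\Sigma,\theta}=U\Phi_M$ for some $U\in\Lambda^\times$.

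The next step computes the length of $M/\Omega_nM$. Because $M$ is $\Z_p$-free of rank $d$ and the quotient is finite, the map $\Omega_n(\gamma_M)=\gamma_M^{p^n}-1$ is an injective endomorphism of $\Z_p^d$. Hence
\[
\length_{\Z_p}M/\Omega_nM=v_p\det(\gamma_M^{p^n}-1).
\]
Over $\overline{\Q}_p$, let the eigenvalues of $\gamma_M$ be $\beta_1,\dots,\beta_d$. Each is a root of $\Phi_M(T)$ shifted by $1$, so $\beta_i=1+\pi_i$ where the $\pi_i$ are the roots of $\Phi_M$, each with $|\pi_i|<1$ because $\Phi_M$ is distinguished. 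The factorization $x^{p^n}-1=\prod_{\zeta^{p^n}=1}(x-\zeta)$ gives
\[
\det(\gamma_M^{p^n}-1)=\prod_i\prod_\zeta(\beta_i-\zeta)=\pm\prod_\zeta\prod_i\bigl((\zeta-1)-\pi_i\bigr)=\pm\prod_\zeta\Phi_M(\zeta-1).
\]
Taking $v_p$ and using $v_p(U(\zeta-1))=0$ yields the stated formula. The evaluation is justified because $|\zeta-1|<1$ and $U$ is a unit power series, so its value is a unit.

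For nonvanishing, if some $\mathscr L_{\Sigma,\theta}(\zeta-1)$ were $0$, then $\Phi_M(\zeta-1)=0$. The determinant above would vanish, contradicting the finiteness of $A_{n,\Sigma}(\theta)$. Equivalently, it would contradict the injectivity of $\Omega_n$ on $M$, which is inherited from $X^-[\Omega_n]=0$ together with $C_\Sigma[\Omega_n]=0$, the latter holding since $(C_\Sigma)_{\Gamma_n}$ is finite and $C_\Sigma$ is free.

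The main obstacle is bookkeeping rather than substance. The determinant identity must be read with $\Phi_M(T)=\det((1+T)I-\gamma_M)$, so that $\Phi_M(\zeta-1)=\det(\zeta I-\gamma_M)$ up to sign. The other point is that the equality of ideals in Theorem~\ref{thm:measure-main} transfers valuations only up to a unit, and a unit of $\Lambda$ has unit values at every $\zeta-1$. Once these two points are checked, the rest follows directly from the earlier results.
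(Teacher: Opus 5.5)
Your proposal is correct and follows the paper's proof essentially step for step. Control together with the $\theta$-idempotent identifies $A_{n,\Sigma}(\theta)$ with $M/\Omega_nM$ for the $\Z_p$-free module $M=X_\Sigma(\theta)$, and the length is $v_p\det(\gamma_M^{p^n}-I)=\sum_\zeta v_p(\Phi_M(\zeta-1))$. Theorem~\ref{thm:measure-main} then transfers this to $\mathscr L_{\Sigma,\theta}$ up to a unit of $\Lambda$, and that unit takes unit values at every $\zeta-1$. Your eigenvalue computation is just an explicit form of the paper's factorization $\det(\gamma_M^{p^n}-I)=\pm\prod_\zeta\det(\zeta I-\gamma_M)$. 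The remark that the roots of $\Phi_M$ lie in the open unit disc is true but not needed.
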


\begin{proof}
Put $M=X_\Sigma(\theta)$.
By Proposition~\ref{prop:freeness}, $M$ is a finite
free $\Z_p$-module.
Let $\gamma_M$ denote the automorphism of $M$
induced by $\gamma$.
Recall that
\[
\Omega_n(T)=(1+T)^{p^n}-1.
\]
Since $\gamma=1+T$, multiplication by $\Omega_n$
on $M$ is the endomorphism $\gamma_M^{p^n}-I$.
Ray class control, followed by taking the
$\theta$-component, gives
\[
M/\Omega_nM
\simeq A_{n,\Sigma}(\theta).
\]
In particular, this quotient is finite.
Thus $\gamma_M^{p^n}-I$ becomes invertible on
$M\otimes_{\Z_p}\Q_p$ and is injective on $M$.
The elementary divisor theorem over $\Z_p$ therefore
gives
\[
\length_{\Z_p}A_{n,\Sigma}(\theta)
=
v_p\!\left(\det(\gamma_M^{p^n}-I)\right).
\]

Recall also that
\[
\Phi_M(T)=\det\bigl((1+T)I-\gamma_M\bigr).
\]
Over a finite extension of $\Q_p$ containing all
$p^n$-th roots of unity, the factorization
\[
X^{p^n}-1=\prod_{\zeta^{p^n}=1}(X-\zeta)
\]
gives, up to a sign,
\[
\det(\gamma_M^{p^n}-I)
=
\pm\prod_{\zeta^{p^n}=1}
\det(\zeta I-\gamma_M)
=
\pm\prod_{\zeta^{p^n}=1}\Phi_M(\zeta-1).
\]
The determinant on the left is nonzero, so every
factor on the right is nonzero.
Taking $p$-adic valuations yields
\[
\length_{\Z_p}A_{n,\Sigma}(\theta)
=
\sum_{\zeta^{p^n}=1}v_p\bigl(\Phi_M(\zeta-1)\bigr).
\]

The determinant presentation of $M$ and
Theorem~\ref{thm:measure-main} identify the same
characteristic ideal:
\[
(\Phi_M)
=\charideal_\Lambda(M)
=(\mathscr L_{\Sigma,\theta}).
\]
Consequently,
\[
\mathscr L_{\Sigma,\theta}(T)=U(T)\Phi_M(T)
\]
for some $U(T)\in\Lambda^\times$.
For every $p$-power root of unity $\zeta$, one has
$|\zeta-1|_p<1$.
Since $U(0)\in\Z_p^\times$ and all coefficients
of $U$ are integral, $U(\zeta-1)$ is a $p$-adic
unit.
Hence
\[
v_p\bigl(\mathscr L_{\Sigma,\theta}(\zeta-1)\bigr)
=
v_p\bigl(\Phi_M(\zeta-1)\bigr).
\]
Substitution proves the asserted length formula
and also the nonvanishing of all values occurring
in the sum.
\end{proof}

\begin{corollary}\label{cor:bottom-L}
For every odd $\theta\ne\omega$,
\[
\length_{\Z_p}A_0(\theta)
=v_p(L(0,\theta^{-1}))
\]
and
\[
\length_{\Z_p}A_{0,\Sigma}(\theta)
=v_p\left(
L(0,\theta^{-1})
\prod_{\ell\in\Sigma}(1-\ell\theta^{-1}(\ell))
\right).
\]
For the exceptional character,
\[
\length_{\Z_p}A_{0,\Sigma}(\omega)
=\sum_{\ell\in\Sigma}a_\ell-1.
\]
\end{corollary}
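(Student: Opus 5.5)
The plan is to specialize Corollary~\ref{cor:finite-analytic-length} at $n=0$, where the only $p^0$-th root of unity is $\zeta=1$. This gives
\[
\length_{\Z_p}A_{0,\Sigma}(\theta)=v_p\bigl(\mathscr L_{\Sigma,\theta}(0)\bigr),
\]
and likewise for the ordinary module. We then evaluate the explicit generators of Theorem~\ref{thm:measure-main} at $T=0$.

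\emph{The first formula.} Specializing at $n=0$ is not directly available for $X(\theta)$, so we argue as follows. By Lemma~\ref{lem:ordinary-control} and Proposition~\ref{prop:freeness}, $X(\theta)$ is finite free over $\Z_p$ with $X(\theta)/TX(\theta)\simeq A_0(\theta)$. The determinant argument in the proof of Corollary~\ref{cor:finite-analytic-length} then gives $\length A_0(\theta)=v_p(\Phi_{X(\theta)}(0))$. By \eqref{eq:MW}, $\Phi_{X(\theta)}$ and $F_\theta$ differ by a unit of $\Lambda$, and a unit has unit value at $0$. Hence $\length A_0(\theta)=v_p(F_\theta(0))=v_p(L(0,\theta^{-1}))$, using \eqref{eq:finite-order-normalization} with $\psi$ trivial. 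Alternatively, we can take $\Sigma=\{c\}$ for a nonmatching prime $c$; then $\mathcal E_{c,\theta}(0)$ is a unit and the ray class and ordinary components coincide by Theorem~\ref{thm:finite}.

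\emph{The second formula.} We evaluate \eqref{eq:multi-analytic} at $T=0$. Each local factor specializes as
\[
\mathcal E_{\ell,\theta}(0)=1-\ell\theta(\delta_\ell)^{-1}=1-\ell\theta^{-1}(\ell).
\]
Since $F_\theta(0)=L(0,\theta^{-1})$, taking valuations of the product gives the claim. Nonvanishing of the value is guaranteed by Corollary~\ref{cor:finite-analytic-length}.

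\emph{The exceptional case.} The cleanest route avoids analysis. By Corollary~\ref{cor:omega-tower}, $A_0(\omega)=0$, so $A_{0,\Sigma}(\omega)=D_{0,\Sigma}(\omega)$. Equation~\eqref{eq:finite-character-length} at $n=0$ then gives
\[
\sum_\ell q_{0,\ell}m_{0,\ell}-1=\sum_\ell a_\ell-1,
\]
because $q_{0,\ell}=1$, $m_{0,\ell}=\max(a_\ell,1)=a_\ell$, and every $\ell$ is matching for $\omega$. As a consistency check, the analytic route gives
\[
v_p\bigl((1+T)G(T)\textstyle\prod_\ell\mathcal E_{\ell,\omega}/L\bigr)\big|_{T=0}=\sum_\ell v_p(1-\ell\omega(\ell)^{-1})-v_p(1-u),
\]
since $G(0)$ is a unit. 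Each term equals $v_p(1-\langle\ell\rangle)=a_\ell$, and $v_p(1-u)=1$, so the two computations agree.

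The only delicate point is the first formula. There we must justify using the ordinary module directly; the alternative is the auxiliary nonmatching prime, whose existence follows because $\theta\ne\omega$ allows choosing $c$ with $\theta(c)\ne\omega(c)$, by Dirichlet.
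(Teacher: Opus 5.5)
Your treatment of the two nonexceptional formulas follows the paper's proof. For $A_0(\theta)$ you use ordinary control, the determinant argument and \eqref{eq:MW}. For $A_{0,\Sigma}(\theta)$ you evaluate \eqref{eq:multi-analytic} at $T=0$ via Corollary~\ref{cor:finite-analytic-length}. Your alternative with an auxiliary nonmatching prime $c$ is also valid. It does not reduce the dependence on \eqref{eq:MW}, since the second formula already uses it. You also do not need Dirichlet's theorem to find $c$: the primes different from $p$ generate $(\Z/p\Z)^\times$, so they cannot all lie in the kernel of the nontrivial character $\theta\omega^{-1}$.

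For $\omega$, the paper takes the route you present only as a consistency check. It specializes $\mathscr L_{\Sigma,\omega}=(1+T)G(T)\prod_{\ell}\mathcal E_{\ell,\omega}/L(T)$ at $T=0$, using $v_p(1-u)=1$ and $v_p(\mathcal E_{\ell,\omega}(0))=a_\ell$. Your primary argument instead combines the classical vanishing $A_0(\omega)=0$ with \eqref{eq:finite-character-length} at $n=0$. There $q_{0,\ell}=1$, $m_{0,\ell}=a_\ell$, and $\epsilon_\ell(\omega)=1$ for every $\ell$.

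Your argument is more elementary. It uses only the finite ray class sequence and the decomposition of the auxiliary primes, with no Ferrero--Washington, no control, and no measure. Through \eqref{eq:finite-exceptional} it even gives the elementary divisors, not just the length.

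The paper's route is what justifies placing the statement in Section~\ref{sec:analytic}. It shows that the analytic generator, in which the pole of $L_p(s,1)$ cancels exactly one factor $L(T)$, recovers this length at the bottom layer. Your check confirms that the two computations agree.
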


\begin{proof}
Ordinary control, \eqref{eq:MW}, and the determinant
argument above at $n=0$ give the first formula.
For the second, specialize
Theorem~\ref{thm:measure-main} and
Corollary~\ref{cor:finite-analytic-length} at $n=0$.
For $\omega$, use \eqref{eq:multi-analytic},
$v_p(1-u)=1$, and
\[
v_p(\mathcal E_{\ell,\omega}(0))
=v_p(1-\langle\ell\rangle)=a_\ell.
\]
\end{proof}

\begin{remark}
These are equalities of lengths, not assertions that
the full class-group components are cyclic.
For $\theta=\omega^{1-k}$, the bottom-layer length
is obtained at $s=0$, whereas $G_k^{(\ell)}$
arises at $s=1-k$.
This explains the distinction between
Corollaries~\ref{cor:HR} and \ref{cor:bottom-L}.

For several auxiliary primes, the exceptional
bottom-layer component is nonzero whenever
$|\Sigma|\geq2$, since
$\sum_{\ell\in\Sigma}a_\ell-1\geq|\Sigma|-1$.
Thus its nonvanishing is equivalent to a Wieferich
condition only in the singleton case.
\end{remark}

\section{Special values and modular-form consequences}
\label{sec:applications}

We evaluate the analytic generators at negative
integers and compare the resulting criteria with
$\ell$-Genocchi irregularity, Wieferich primes, and
Eisenstein congruences of prime level.

\subsection{Normalized special values}

Put
\[
H_k^{(\ell)}=(1-\ell^k)\frac{B_k}{k},
\qquad
G_k^{(\ell)}=\ell kH_k^{(\ell)}.
\]
For even $k\geq2$, take
$\theta=\omega^{1-k}$, allowing $\theta=\omega$.
Interpolation and regularization give
\begin{equation}\label{eq:weight-value}
\mathscr L_{\ell,\omega^{1-k}}(u^{1-k}-1)
=-(1-p^{k-1})H_k^{(\ell)}.
\end{equation}
Indeed,
\[
L_p(1-k,\omega^k)
=(1-p^{k-1})\zeta(1-k)
=-(1-p^{k-1})\frac{B_k}{k},
\]
and the regularization factor is $1-\ell^k$.

An odd prime $p\ne\ell$ is called
$\ell$-Genocchi regular if none of
\[
G_2^{(\ell)},G_4^{(\ell)},\ldots,G_{p-3}^{(\ell)}
\]
is divisible by $p$.
For $p=3$, this condition is vacuous.
Define the Wieferich set
\[
\mathcal W_\ell
=\{p>2:\ p\ne\ell,\ 
\ell^{p-1}\equiv1\pmod{p^2}\}.
\]
We reserve $W_\ell$ without a calligraphic letter
for the Atkin--Lehner operator below.

The regularity criterion of
\cite[Proposition 4]{MS23} is
\[
\begin{split}
p\text{ is $\ell$-Genocchi regular}
\quad\Longleftrightarrow\quad&
p\text{ is Bernoulli regular}\\
&\text{and }\ord_p(\ell^2)=(p-1)/2.
\end{split}
\]
For $\ell=2$, this criterion and its connection with
refined class numbers were established in \cite{HKMS19}.

\begin{corollary}\label{cor:sequence}
For every odd prime $p\ne\ell$,
\[
v_p(h_{0,\{\ell\}}^-)
=v_p(h_0^-)+a_\ell b_\ell-1.
\]
The following conditions are equivalent:
\begin{enumerate}[label=\textup{(\roman*)}]
\item $p\mid h_{0,\{\ell\}}^-$;
\item $p$ is $\ell$-Genocchi irregular or
$p\in\mathcal W_\ell$;
\item $p\mid H_k^{(\ell)}$ for some even $k\geq2$;
\item $p\mid H_k^{(\ell)}$ for some even $k\geq4$.
\end{enumerate}
\end{corollary}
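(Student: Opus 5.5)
The valuation formula is the case $n=0$ of \eqref{eq:finite-class-number} in Theorem~\ref{thm:finite}. At $n=0$ we have $q_{0,\ell}=p^{\min(0,a_\ell-1)}=1$ and $m_{0,\ell}=\max(a_\ell,1)=a_\ell$. Hence $v_p(h_{0,\{\ell\}}^-)-v_p(h_0^-)=b_\ell a_\ell-1$.

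\textbf{Step 1: (i)$\Leftrightarrow$(ii).} By the valuation formula, (i) holds exactly when $p\mid h_0^-$ or $a_\ell b_\ell\geq2$. Since $a_\ell,b_\ell\geq1$, the second alternative means $a_\ell\geq2$ or $b_\ell\geq2$. We translate the three conditions as follows.
\begin{itemize}
\item By Proposition~\ref{prop:omega}, whose proof gives $v_p(\ell^{p-1}-1)=a_\ell$, we have $a_\ell\geq2$ iff $p\in\mathcal W_\ell$.
\item By \eqref{eq:b-d}, $b_\ell\geq2$ iff $\ord_p(\ell^2)\neq(p-1)/2$.
\item By Lemma~\ref{lem:relative} and Kummer's criterion (equivalently Herbrand--Ribet, as in Corollary~\ref{cor:HR}, summed over the even $k\le p-3$, together with $A_0(\omega)=0$), $p\mid h_0^-$ iff $p$ is Bernoulli irregular.
\end{itemize}
Thus (i) says: $p$ is Bernoulli irregular, or $\ord_p(\ell^2)\ne(p-1)/2$, or $p\in\mathcal W_\ell$. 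The regularity criterion of \cite[Proposition 4]{MS23} turns the first two alternatives into $\ell$-Genocchi irregularity, which gives (ii). The case $p=3$ needs a separate look. There $b_\ell=1$, Bernoulli regularity is automatic, and $\ord_3(\ell^2)=1=(p-1)/2$. So both sides reduce to $p\in\mathcal W_\ell$.

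\textbf{Step 2: (ii)$\Leftrightarrow$(iii) with $k$ restricted to $2\le k\le p-1$.}
\begin{itemize}
\item For even $2\le k\le p-3$: $B_k$ is $p$-integral and $\ell k$ is a $p$-adic unit. Hence $p\mid H_k^{(\ell)}$ iff $p\mid G_k^{(\ell)}$.
\item For $k=p-1$: von Staudt--Clausen gives $v_p(B_{p-1})=-1$, and $v_p(1-\ell^{p-1})=a_\ell$. So $v_p(H_{p-1}^{(\ell)})=a_\ell-1$, which is positive iff $p\in\mathcal W_\ell$.
\end{itemize}
Therefore (ii) is equivalent to $p\mid H_k^{(\ell)}$ for some even $k\in\{2,\dots,p-1\}$.

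\textbf{Step 3: periodicity of the condition $p\mid H_k^{(\ell)}$.} This is the main point. We need that the condition depends only on $k$ modulo $p-1$, which reduces arbitrary even $k\geq2$ to Step 2. Use \eqref{eq:weight-value}:
\[
H_k^{(\ell)}=-(1-p^{k-1})^{-1}\mathscr L_{\ell,\omega^{1-k}}(u^{1-k}-1).
\]
Here $1-p^{k-1}$ is a unit for $k\geq2$. The series $\mathscr L_{\ell,\theta}$ lies in $\Lambda$ for every odd $\theta$, including $\theta=\omega$, and depends on $k$ only through $\theta=\omega^{1-k}$. Since $u^{1-k}-1\in p\Z_p$, every $f\in\Lambda$ satisfies $f(u^{1-k}-1)\equiv f(0)\pmod p$. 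Hence $p\mid H_k^{(\ell)}$ iff $p\mid\mathscr L_{\ell,\omega^{1-k}}(0)$, a condition on $k\bmod(p-1)$ alone.

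The even residues modulo $p-1$ are represented by $\{2,4,\dots,p-1\}$. Combined with Step 2, this gives (ii)$\Leftrightarrow$(iii).

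\textbf{Step 4: (iii)$\Leftrightarrow$(iv).} Clearly (iv)$\Rightarrow$(iii). Conversely, if $k=2$ works, then $k=2+(p-1)=p+1$ is even, at least $4$, and congruent to $2$ modulo $p-1$, so it works by Step 3.

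The only delicate point is the exceptional class $k\equiv0\pmod{p-1}$ in Step 3. There $\theta=\omega$, and integrality of $\mathscr L_{\ell,\omega}$ comes from the bounded measure $E_{1,\ell}$ (equivalently Proposition~\ref{prop:exceptional-analytic}), not from $F_\theta$. For this reason the congruence must be applied to $\mathscr L_{\ell,\theta}$ rather than to $L_p$ directly.
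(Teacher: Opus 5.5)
Your proof is correct. Its outline matches the paper's: the case $n=0$ of \eqref{eq:finite-class-number}, then Kummer's criterion with the criterion of \cite{MS23} for (i)$\Leftrightarrow$(ii), then reduction of $k$ modulo $p-1$, then the shift $k=2\mapsto p+1$.

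The difference is in how you show that the condition $p\mid H_k^{(\ell)}$ depends only on $k\bmod(p-1)$. The paper quotes the classical Kummer congruences for $(1-p^{k-1})B_k/k$, together with $\ell^k\equiv\ell^r\pmod p$, in the nonexceptional classes. In the class $p-1\mid k$ it computes $v_p(H_k^{(\ell)})=a_\ell-1$ for every such $k$, directly from von Staudt--Clausen and lifting the exponent.

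You instead read the periodicity off the integrality $\mathscr L_{\ell,\omega^{1-k}}\in\Lambda$ and the weight specialization \eqref{eq:weight-value} at the point $u^{1-k}-1\in p\Z_p$. This amounts to a Kummer congruence for the regularized values themselves. Because the factor $1-\ell^k$ is built into the bounded measure $E_{1,\ell}$, it absorbs the $p$ in the denominator of $B_k/k$, so both kinds of residue classes are handled at once.

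Your route is more uniform and needs von Staudt--Clausen only at $k=p-1$. However, it makes this corollary depend on the analytic input of Section~\ref{sec:analytic}, namely the integral representation from \cite{KW21}, including the case $\theta=\omega$. The paper's argument uses only classical facts about Bernoulli numbers. It also gives the exact value $v_p(H_k^{(\ell)})=a_\ell-1$ throughout the exceptional class, rather than only its positivity.
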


\begin{proof}
The valuation formula is
\eqref{eq:finite-class-number} at $n=0$.
Since $a_\ell,b_\ell\geq1$,
\[
a_\ell b_\ell-1>0
\quad\Longleftrightarrow\quad
a_\ell>1\text{ or }b_\ell>1.
\]
Now $a_\ell>1$ is the Wieferich condition, and
$b_\ell=1$ is equivalent to
$\ord_p(\ell^2)=(p-1)/2$.
Kummer's criterion and the displayed regularity
criterion give the equivalence of (i) and (ii).

For the remaining equivalences, suppose first that
$p-1\nmid k$.
Let $r$ be the even integer with
$2\leq r\leq p-3$ and $r\equiv k\pmod{p-1}$.
The Kummer congruences give
\[
(1-p^{k-1})\frac{B_k}{k}
\equiv
(1-p^{r-1})\frac{B_r}{r}\pmod p.
\]
The two sides are $p$-integral, and both Euler
factors are congruent to $1$ modulo $p$.
Also $\ell^k\equiv\ell^r\pmod p$.
Thus divisibility in these classes is detected by
$\ell$-Genocchi irregularity.

If $p-1\mid k$, von Staudt--Clausen and
lifting the exponent give
\[
v_p(H_k^{(\ell)})
=a_\ell+v_p(k)-1-v_p(k)
=a_\ell-1.
\]
This detects $\mathcal W_\ell$.
These arguments also cover $p=3$, when all even
weights lie in the exceptional class.

Finally, a nonexceptional weight $2$ can be replaced
by $2+(p-1)$.
In the exceptional class, choose an even multiple of
$p-1$ at least $4$.
This proves all the equivalences; compare
\cite[Lemma 4 and Proposition 3]{MS23}.
\end{proof}

\begin{remark}
The normalization by $k$ matters when the index varies.
The identity $G_k^{(\ell)}=\ell kH_k^{(\ell)}$
introduces additional prime divisors into the full
Genocchi sequence; see \cite[Proposition 2]{MS23}.
\end{remark}

\subsection{Eisenstein congruences}

Let $z$ lie in the complex upper half-plane, put
$q=e^{2\pi iz}$, and write
\[
\sigma_{k-1}(n)=\sum_{d\mid n}d^{k-1}.
\]
All spaces $S_k(\Gamma_0(\ell))$ below have trivial
nebentypus.
For even $k\geq4$, normalize
\[
E_k(z)
=-\frac{B_k}{2k}
+\sum_{n\geq1}\sigma_{k-1}(n)q^n,
\]
and define
\[
E_{k,\ell}^{\varepsilon}(z)
=E_k(z)+\varepsilon\ell^{k/2}E_k(\ell z),
\qquad \varepsilon\in\{\pm1\}.
\]
Its constant term is
\[
-\frac12H_k^{\varepsilon,(\ell)},
\qquad
H_k^{\varepsilon,(\ell)}
=(1+\varepsilon\ell^{k/2})\frac{B_k}{k}.
\]

We normalize the Atkin--Lehner operator by
\[
(f|_kW_\ell)(z)
=\ell^{-k/2}z^{-k}f\!\left(-\frac1{\ell z}\right).
\]
For even $k$, this is an involution.
With this normalization,
\[
E_k|_kW_\ell=\ell^{k/2}E_k(\ell z),
\qquad
E_{k,\ell}^{\varepsilon}|_kW_\ell
=\varepsilon E_{k,\ell}^{\varepsilon}.
\]
Let $S_k^\varepsilon(\ell)$ be the
$\varepsilon$-eigenspace in $S_k(\Gamma_0(\ell))$.

A congruence modulo a prime $\mathfrak P$ in a
coefficient field means coefficientwise congruence
in the corresponding local integer ring.
A normalized form has first Fourier coefficient $1$.
The following two inputs are distinct: the first gives
Hecke eigenfunctions away from the level, and the
second gives newforms.

\begin{theorem}[Existing Eisenstein--cuspidal congruences]
\label{thm:external-eisenstein}
Let $k\geq4$ be even, and let $p\geq5$ and
$\ell\ne p$ be primes.
\begin{enumerate}[label=\textup{(\roman*)}]
\item If $\varepsilon\in\{\pm1\}$ and
$p\mid H_k^{\varepsilon,(\ell)}$, then there are
a normalized cusp form $f\in S_k^\varepsilon(\ell)$,
which is an eigenfunction of every Hecke operator
$T_q$ for prime $q\ne\ell$, and a prime
$\mathfrak P$ above $p$ in its coefficient field,
such that
\[
f\equiv E_{k,\ell}^{\varepsilon}\pmod{\mathfrak P}.
\]
\item If $p\mid H_k^{(\ell)}$, the conclusion in
\textup{(i)} holds for at least one sign.
\end{enumerate}
The congruence includes the constant term.
There is no hypothesis $p>k+1$, no exclusion
$\ell\equiv-1\pmod p$, and no assumption
$p\nmid\ell^2-1$.
\end{theorem}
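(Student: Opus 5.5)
The plan is to reduce part (i) to the cited Eisenstein-congruence theorems of \cite{KKMS23,MS23}, sketching the mechanism behind them. Part (ii) then follows from (i) by a $p$-adic valuation argument, so no new modular-forms input is needed.

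\textbf{Part (i).} I would follow the standard route in \cite{KKMS23,MS23}.
\begin{enumerate}[label=\textup{(\alph*)}]
\item The form $E_{k,\ell}^{\varepsilon}$ lies in $M_k(\Gamma_0(\ell))$. It is an eigenform of every $T_q$ with $q\ne\ell$, with eigenvalue $1+q^{k-1}$, because $E_k(z)$ and $E_k(\ell z)$ share these eigenvalues. By the displayed Atkin--Lehner relation it is a $W_\ell$-eigenform with eigenvalue $\varepsilon$.
\item Since $W_\ell$ interchanges the cusps $\infty$ and $0$ and $E_{k,\ell}^{\varepsilon}|_kW_\ell=\varepsilon E_{k,\ell}^{\varepsilon}$, its constant term at $0$ is $\varepsilon$ times its constant term $-\tfrac12H_k^{\varepsilon,(\ell)}$ at $\infty$. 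The hypothesis $p\mid H_k^{\varepsilon,(\ell)}$ therefore makes the reduction of $E_{k,\ell}^{\varepsilon}$ modulo $p$ vanish at both cusps.
\item Using $p\ge5$ and the integral $q$-expansion principle on $X_0(\ell)$, I would find a cusp form $g$ with $p$-integral coefficients in $S_k^{\varepsilon}(\ell)$ that is congruent to $E_{k,\ell}^{\varepsilon}$ modulo $p$, constant term included. To get $g$ inside the $\varepsilon$-eigenspace, project with $(1+\varepsilon W_\ell)/2$, which is $p$-integral since $p$ is odd.
\item Let $\mathbb T^{\varepsilon}$ be the algebra generated by the $T_q$ with $q\ne\ell$ acting on $S_k^{\varepsilon}(\ell)$; this is well defined because $W_\ell$ commutes with these $T_q$. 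The reduction of $g$ is an eigenvector with eigenvalues $1+q^{k-1}$. The Deligne--Serre lifting lemma then gives a normalized eigenfunction $f\in S_k^{\varepsilon}(\ell)$ and a prime $\mathfrak P$ with matching eigenvalues modulo $\mathfrak P$.
\item It remains to pass from congruent eigenvalues to the full coefficientwise congruence $f\equiv E_{k,\ell}^{\varepsilon}\pmod{\mathfrak P}$. This requires controlling $a_\ell(f)$ and the coefficients at multiples of $\ell$, using the $W_\ell$-sign and $U_\ell$. It is also why no hypotheses such as $p>k+1$ or $p\nmid\ell^2-1$ are needed.
\end{enumerate}
I expect step (e), the normalization at $\ell$ and the constant term, to be the main obstacle. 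I would take it directly from the cited theorems rather than reprove it.

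\textbf{Part (ii).} I would use the factorization
\[
(1-\ell^k)\frac{B_k}{k}=(1-\ell^{k/2})(1+\ell^{k/2})\frac{B_k}{k},
\]
and first dispose of the denominators. Since $p$ is odd, $p$ cannot divide both $1-\ell^{k/2}$ and $1+\ell^{k/2}$, because it would then divide their sum $2$.
\begin{itemize}
\item If $p$ divides $1+\varepsilon\ell^{k/2}$ for some sign $\varepsilon$, then $1-\varepsilon\ell^{k/2}$ is a $p$-adic unit. This gives $v_p(H_k^{(\ell)})=v_p(H_k^{\varepsilon,(\ell)})$.
\item If $p$ divides neither factor, then $v_p(H_k^{(\ell)})=v_p(B_k/k)=v_p(H_k^{\pm,(\ell)})$ for both signs.
\end{itemize}
In either case $p\mid H_k^{(\ell)}$ forces $p\mid H_k^{\varepsilon,(\ell)}$ for some $\varepsilon$. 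This holds even when $B_k/k$ has $p$ in its denominator, which happens when $p-1\mid k$. Applying part (i) to that sign gives (ii).
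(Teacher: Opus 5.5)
Your proposal is correct and follows the paper's proof. Part (i) is a citation of the $k\geq4$, prime-to-level case of \cite[Corollary 4.2]{KKMS23} (equivalently \cite[Theorem 2(ii)]{MS23}), whose hypothesis differs from $p\mid H_k^{\varepsilon,(\ell)}$ only by the $p$-adic unit $2\varepsilon$ under the reduced-numerator convention. Part (ii) is deduced from (i) by the same unit-factor valuation argument you give, which allows $v_p(B_k/k)<0$.

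You should name that corollary explicitly rather than cite the results loosely. The newform result \cite[Theorem 1.3]{KKMS23} imposes $k\not\equiv0\pmod{p-1}$ and $p\nmid B_k/(2k)$, and so would exclude cases the statement covers. Your sketch (a)--(e) is not needed; in any case its step (c) would require a lifting theorem for mod $p$ cusp forms on $X_0(\ell)$, not just the $q$-expansion principle.
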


\begin{proof}
Part (i) is the $k\geq4$, prime-to-level case of
\cite[Corollary 4.2]{KKMS23}, also stated as
\cite[Theorem 2(ii)]{MS23}.
Their divisibility convention is divisibility of the
reduced numerator, as used here.
The factor relating their hypothesis to ours is
the $p$-adic unit $2\varepsilon$.

Part (ii) is \cite[Theorem 2(i)]{MS23}.
It also follows from (i).
Put $x=\ell^{k/2}$.
Since $p$ is odd, at least one of $1-x$ and $1+x$
is a $p$-adic unit.
If
\[
v_p\left((1-x^2)\frac{B_k}{k}\right)>0,
\]
then at least one of
\[
v_p\left((1-x)\frac{B_k}{k}\right),
\qquad
v_p\left((1+x)\frac{B_k}{k}\right)
\]
is positive.
This argument allows $B_k/k$ to have negative
valuation.
\end{proof}

\begin{theorem}[Newform congruences of \cite{KKMS23}]
\label{thm:external-newform}
Let $k\geq4$ be even, let $p\geq5$ and
$\ell\ne p$ be primes, and let $\varepsilon\in\{\pm1\}$.
Suppose
\[
p\mid(\varepsilon+\ell^{k/2})\frac{B_k}{2k},
\qquad
k\not\equiv0\pmod{p-1},
\qquad
p\nmid\frac{B_k}{2k}.
\]
Then there are a normalized newform
$f\in S_k^\varepsilon(\ell)$ and a prime
$\mathfrak P$ above $p$ such that
\[
f\equiv E_{k,\ell}^{\varepsilon}\pmod{\mathfrak P}.
\]
\end{theorem}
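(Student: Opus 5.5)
This theorem is a citation of an external result, so the plan is to reduce it to the precise statement in \cite{KKMS23} and check that its hypotheses and normalizations match ours. I would locate the newform theorem there for prime level $\ell$ and trivial nebentypus; it should be their main result on Eisenstein congruences with newforms in a prescribed Atkin--Lehner eigenspace. Three translations are then needed.

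First, the Eisenstein series. I would confirm that their series with sign $\varepsilon$ coincides, up to a $p$-adic unit scalar, with $E_{k,\ell}^{\varepsilon}=E_k+\varepsilon\ell^{k/2}E_k(\ell z)$. I would also check that their Atkin--Lehner normalization agrees with $f|_kW_\ell=\ell^{-k/2}z^{-k}f(-1/(\ell z))$. If their operator differs by a sign convention, $\varepsilon$ may be interchanged, and I would track this explicitly. Under our normalization, the identity $E_k|_kW_\ell=\ell^{k/2}E_k(\ell z)$ gives $E_{k,\ell}^{\varepsilon}|_kW_\ell=\varepsilon E_{k,\ell}^{\varepsilon}$. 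So the eigenspace $S_k^\varepsilon(\ell)$ is the correct target.

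Second, the divisibility hypothesis. Their condition is presumably stated through the constant term $-(1+\varepsilon\ell^{k/2})B_k/(2k)$. Since
\[
\varepsilon+\ell^{k/2}=\varepsilon(1+\varepsilon\ell^{k/2}),
\]
our hypothesis $p\mid(\varepsilon+\ell^{k/2})B_k/(2k)$ is equivalent to theirs, with divisibility read as divisibility of the reduced numerator. The remaining conditions $k\not\equiv0\pmod{p-1}$ and $p\nmid B_k/(2k)$ are exactly the hypotheses that rule out the congruence coming from an old form of level one and force a genuine newform.

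Third, the form of the conclusion. Their result should produce a normalized newform $f$ and a prime $\mathfrak P$ above $p$ in its coefficient field, with a coefficientwise congruence including the constant term. I would note that the constant term of $E_{k,\ell}^{\varepsilon}$ is $\equiv0\pmod p$ under the hypothesis, so the congruence is meaningful as stated.

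The main obstacle is purely bookkeeping: matching the sign conventions for $W_\ell$ and $\varepsilon$ between the two papers. If the conventions disagree, I would verify the statement directly by computing the action of $W_\ell$ on $E_k$ and $E_k(\ell z)$, using the level-one transformation law $E_k(-1/z)=z^kE_k(z)$.
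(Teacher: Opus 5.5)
Your proposal matches the paper: the statement is proved simply by citing \cite[Theorem 1.3]{KKMS23}, with their level prime written $\ell$ here and their congruence prime written $p$. Your checks of the $W_\ell$ normalization, the sign $\varepsilon$, and the identity $\varepsilon+\ell^{k/2}=\varepsilon(1+\varepsilon\ell^{k/2})$ are the right bookkeeping, but you should cite Theorem 1.3 specifically; the extra restrictions of their Theorem 1.2 are not needed.
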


\begin{proof}
This is \cite[Theorem 1.3]{KKMS23}, with their
level prime denoted here by $\ell$ and their
congruence prime denoted here by $p$.
That theorem also permits $k=2$ with
$\varepsilon=-1$; only the $k\geq4$ case is used here.
The additional restrictions in their Theorem 1.2
are not being imposed or used here.
\end{proof}

\begin{remark}[Exceptional weights and integrality]
If $p-1\mid k$, then
\[
v_p(B_k/k)=-1-v_p(k).
\]
Nevertheless, if $a_\ell>1$, one sign
$\varepsilon$ makes
$H_k^{\varepsilon,(\ell)}$ divisible by $p$.
Choose the sign such that
$\ell^{k/2}\equiv-\varepsilon\pmod p$.
The other factor of $1-\ell^k$ is a unit, so
\[
v_p(H_k^{\varepsilon,(\ell)})=a_\ell-1.
\]
The positive Fourier coefficients of
$E_{k,\ell}^{\varepsilon}$ are integral, and its
constant term is then $p$-integral and vanishes
modulo $p$.
Thus Theorem~\ref{thm:external-eisenstein} applies
in this Wieferich case.
Theorem~\ref{thm:external-newform}, which excludes
$k\equiv0\pmod{p-1}$, is not being applied.
\end{remark}

\subsection{Ray class interpretations}

\begin{corollary}\label{cor:modular}
Let $p\geq5$ and $\ell\ne p$ be primes.
Then $p\mid h_{0,\{\ell\}}^-$ if and only if,
for some even weight $k\geq4$ and some sign
$\varepsilon$, there is a congruence of the kind in
Theorem~\ref{thm:external-eisenstein}.

For a fixed even weight $4\leq k\leq p-3$,
\[
A_{0,\{\ell\}}(\omega^{1-k})\ne0
\]
if and only if such a congruence exists in weight $k$
for at least one sign.
\end{corollary}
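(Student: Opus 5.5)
The plan is to reduce both assertions to the divisibility $p\mid H_k^{(\ell)}$. That divisibility is linked to the ray class group by Corollary~\ref{cor:sequence} and Corollary~\ref{cor:HR}, and to congruences by Theorem~\ref{thm:external-eisenstein}. The one new ingredient is a converse: a congruence of the stated kind forces $p\mid H_k^{\varepsilon,(\ell)}$, and hence $p\mid H_k^{(\ell)}$.

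\emph{Converse step.} Suppose $f\in S_k^\varepsilon(\ell)$ is normalized and $f\equiv E_{k,\ell}^{\varepsilon}\pmod{\mathfrak P}$, with the congruence including the constant term.
\begin{enumerate}[label=\textup{(\alph*)}]
\item Since $f$ is a cusp form, its constant term is $0$.
\item The congruence takes place in the local integer ring. So the constant term $-\tfrac12 H_k^{\varepsilon,(\ell)}$ of $E_{k,\ell}^{\varepsilon}$ is $\mathfrak P$-integral and lies in $\mathfrak P$.
\item This constant term is rational and $2$ is a unit, so $p\mid H_k^{\varepsilon,(\ell)}$, meaning it is $p$-integral with positive valuation.
\item Factor $1-\ell^k=(1+\varepsilon\ell^{k/2})(1-\varepsilon\ell^{k/2})$. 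This gives
\[
H_k^{(\ell)}=(1-\varepsilon\ell^{k/2})\,H_k^{\varepsilon,(\ell)},
\]
and the first factor is an integer. Hence $p\mid H_k^{(\ell)}$.
\end{enumerate}
I expect this integrality bookkeeping to be the only delicate point. In the exceptional weights $p-1\mid k$, the number $B_k/k$ has negative valuation, so one must use that the congruence itself forces integrality of $H_k^{\varepsilon,(\ell)}$. The argument should not presuppose it.

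\emph{First assertion.} If $p\mid h_{0,\{\ell\}}^-$, then Corollary~\ref{cor:sequence}, (i)$\Rightarrow$(iv), gives an even $k\geq4$ with $p\mid H_k^{(\ell)}$. Theorem~\ref{thm:external-eisenstein}(ii) then gives a congruence of that kind in weight $k$ for some sign. Note that (ii) already allows $B_k/k$ to be non-integral, which covers the Wieferich case. Conversely, a congruence in some even weight $k\geq4$ gives $p\mid H_k^{(\ell)}$ by the converse step. Corollary~\ref{cor:sequence}, (iii)$\Rightarrow$(i), then gives $p\mid h_{0,\{\ell\}}^-$.

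\emph{Second assertion.} Fix an even $k$ with $4\leq k\leq p-3$. Corollary~\ref{cor:HR}, which applies since $p\geq5$, gives
\[
A_{0,\{\ell\}}(\omega^{1-k})\ne0 \iff p\mid G_k^{(\ell)}=\ell k H_k^{(\ell)}.
\]
Here $\ell\ne p$ and $k<p$ are $p$-adic units. Also $H_k^{(\ell)}$ is $p$-integral, because $B_k$ is $p$-integral in this range. So the condition is equivalent to $p\mid H_k^{(\ell)}$. The argument of the first assertion now runs in this single weight $k$: Theorem~\ref{thm:external-eisenstein}(ii) gives a congruence in weight $k$ for some sign, and the converse step recovers $p\mid H_k^{(\ell)}$ from a congruence in weight $k$ for either sign.
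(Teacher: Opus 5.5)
Your proposal is correct and takes essentially the same route as the paper. The forward directions go through Corollaries~\ref{cor:sequence} and \ref{cor:HR} together with Theorem~\ref{thm:external-eisenstein}(ii). The converse compares constant terms and uses $H_k^{(\ell)}=(1-\varepsilon\ell^{k/2})H_k^{\varepsilon,(\ell)}$. You also note explicitly that, in the exceptional weights, the integrality of $H_k^{\varepsilon,(\ell)}$ comes from the congruence itself rather than being presupposed; this usefully spells out the paper's one-line ``comparing constant terms'' step.
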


\begin{proof}
The forward implications follow from
Corollaries~\ref{cor:sequence} and \ref{cor:HR},
respectively, and
Theorem~\ref{thm:external-eisenstein}.
In the fixed-weight range, $\ell k$ is a
$p$-adic unit.

Conversely, comparing constant terms gives
$p\mid H_k^{\varepsilon,(\ell)}$.
Since
\[
H_k^{(\ell)}
=(1-\varepsilon\ell^{k/2})H_k^{\varepsilon,(\ell)}
\]
and the first factor is an integer, one obtains
$p\mid H_k^{(\ell)}$.
Apply Corollary~\ref{cor:sequence}, or
Corollary~\ref{cor:HR} in the fixed-weight range.
\end{proof}

\begin{corollary}\label{cor:local-newform}
Let $p\geq5$ and $\ell\ne p$ be primes, and let
$4\leq k\leq p-3$ be even.
Suppose
\[
p\nmid B_k,\qquad
A_{0,\{\ell\}}(\omega^{1-k})\ne0.
\]
Then
\[
A_{0,\{\ell\}}(\omega^{1-k})
\simeq\Z/p^{a_\ell}\Z,
\]
and there is a normalized newform
$f\in S_k^\varepsilon(\ell)$ satisfying
\[
f\equiv E_{k,\ell}^{\varepsilon}\pmod{\mathfrak P}
\]
for a prime $\mathfrak P$ above $p$, where
\[
\ell^{k/2}\equiv-\varepsilon\pmod p.
\]
\end{corollary}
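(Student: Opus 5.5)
Set $\theta=\omega^{1-k}$. Since $4\leq k\leq p-3$, we have $\theta\ne\omega$ and $\theta$ is odd. By Herbrand--Ribet, in the form used in the proof of Corollary~\ref{cor:HR}, the hypothesis $p\nmid B_k$ gives $A_0(\omega^{1-k})=0$. Corollary~\ref{cor:HR} then says that $A_{0,\{\ell\}}(\theta)\ne0$ forces $p\mid\ell(1-\ell^k)B_k$. As $p\nmid\ell B_k$, this leaves $\ell^k\equiv1\pmod p$, that is, $\epsilon_\ell(\theta)=1$.

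The exact sequence
\[
0\longrightarrow D_{0,\{\ell\}}(\theta)\longrightarrow A_{0,\{\ell\}}(\theta)\longrightarrow A_0(\theta)\longrightarrow0
\]
with vanishing right term identifies $A_{0,\{\ell\}}(\theta)$ with $D_{0,\{\ell\}}(\theta)$. By \eqref{eq:finite-nonexceptional} at $n=0$, this is $(\Z/p^{m_{0,\ell}}\Z)^{q_{0,\ell}}$. Here $q_{0,\ell}=p^{\min(0,a_\ell-1)}=1$ and $m_{0,\ell}=\max(a_\ell,1)=a_\ell$. This gives the cyclic group $\Z/p^{a_\ell}\Z$.

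For the newform, I will apply Theorem~\ref{thm:external-newform}. Since $\ell^k\equiv1\pmod p$, the element $x=\ell^{k/2}$ satisfies $x^2\equiv1$, so $x\equiv\pm1\pmod p$. Define $\varepsilon$ by $\ell^{k/2}\equiv-\varepsilon\pmod p$; then $p\mid\varepsilon+\ell^{k/2}$. The remaining hypotheses are checked as follows.
\begin{enumerate}[label=\textup{(\alph*)}]
\item The condition $k\not\equiv0\pmod{p-1}$ holds because $4\leq k\leq p-3$.
\item The condition $p\nmid B_k/(2k)$ holds because $B_k$ is $p$-integral and not divisible by $p$, and $2k$ is a $p$-adic unit in this range.
\item The divisibility $p\mid(\varepsilon+\ell^{k/2})B_k/(2k)$ needs more care. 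The factor $B_k/(2k)$ is a $p$-unit rational number, so the product has positive $p$-adic valuation exactly when the integer $\varepsilon+\ell^{k/2}$ does. This matches the ``reduced numerator'' convention noted in the proof of Theorem~\ref{thm:external-eisenstein}.
\end{enumerate}
Theorem~\ref{thm:external-newform} then yields a normalized newform $f\in S_k^\varepsilon(\ell)$ and a prime $\mathfrak P$ above $p$ with $f\equiv E^{\varepsilon}_{k,\ell}\pmod{\mathfrak P}$.

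The main point needing care is the sign bookkeeping. The chosen $\varepsilon$ must match both the statement's condition $\ell^{k/2}\equiv-\varepsilon$ and the divisibility hypothesis $p\mid(\varepsilon+\ell^{k/2})$ of the cited theorem. Since these are the same congruence, I expect no real obstacle; the rest is direct substitution into earlier results.
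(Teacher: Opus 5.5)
Your proposal is correct and follows essentially the same route as the paper. Herbrand--Ribet gives $A_0(\omega^{1-k})=0$, the bottom-layer ray sequence then forces $\ell^k\equiv1\pmod p$ and identifies the component with $\Z/p^{a_\ell}\Z$, and the sign fixed by $\ell^{k/2}\equiv-\varepsilon\pmod p$ makes every hypothesis of Theorem~\ref{thm:external-newform} hold; your checks of $q_{0,\ell}=1$, $m_{0,\ell}=a_\ell$ and the $p$-unit property of $B_k/(2k)$ just make explicit what the paper leaves implicit.
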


\begin{proof}
Herbrand--Ribet gives $A_0(\omega^{1-k})=0$.
The finite local sequence identifies the ray component
with $\Z/p^{a_\ell}\Z$ and gives
$\ell^k\equiv1\pmod p$.
Since $p$ is odd, the sign is uniquely determined.

The fixed-weight assumptions imply
\[
k\not\equiv0\pmod{p-1},
\qquad
v_p(B_k/(2k))=0.
\]
The chosen sign gives
\[
p\mid(\varepsilon+\ell^{k/2})B_k/(2k).
\]
All hypotheses of
Theorem~\ref{thm:external-newform} are satisfied.
\end{proof}

\begin{example}[An exceptional-weight congruence]
\label{ex:wieferich-modular}
Return to $p=5$ and $\ell=7$.
Proposition~\ref{prop:omega} gives
\[
A_{0,\{7\}}(\omega)\simeq\Z/5\Z.
\]
At the exceptional weight $k=4=p-1$,
\[
H_4^{(7)}=20,
\qquad
H_4^{+,(7)}=-\frac5{12}.
\]
Theorem~\ref{thm:external-eisenstein} therefore gives
a normalized cusp form in $S_4^+(7)$, an eigenfunction
away from $7$, congruent to $E_{4,7}^+$ modulo a
prime above $5$.
This uses \cite[Corollary 4.2]{KKMS23}, not their
Theorem 1.3.
The nonintegrality of $B_4/4=-1/120$ causes no
problem after regularization.
\end{example}

\begin{remark}
Complex conjugation on class groups and the
Atkin--Lehner operator on modular forms are different
involutions.
The factorization of $1-\ell^k$ does not provide an
Atkin--Lehner decomposition of the ray class module.
The results above assert existence of congruences,
not a correspondence between individual ray classes
and individual eigenforms.
\end{remark}

\subsection{Vanishing of the local correction}

For a singleton modulus,
\[
C_{\{\ell\}}
\simeq
\bigl(\Lambda/(R_\ell)\bigr)^{b_\ell-1}
\oplus\Lambda/(S_\ell).
\]
Consequently,
\[
C_{\{\ell\}}=0
\quad\Longleftrightarrow\quad
a_\ell=b_\ell=1.
\]
Equivalently,
\[
\ord_p(\ell^2)=(p-1)/2,
\qquad
\ell^{p-1}\not\equiv1\pmod{p^2}.
\]

To formulate the density issue, now fix $\ell$ and
let $p$ vary.
The dependence of $C_{\{\ell\}}$ on $p$ is
understood in this paragraph.
Put
\[
\mathcal P_\ell
=\{p>2:\ p\ne\ell,\ \ord_p(\ell^2)=(p-1)/2\}.
\]
The set of primes for which $C_{\{\ell\}}=0$ is
exactly
\[
\mathcal P_\ell\setminus\mathcal W_\ell.
\]
The near-primitive-root density results in
\cite{HKMS19,MS23} concern $\mathcal P_\ell$ and
must be used with the hypotheses stated there,
including the relevant generalized Riemann hypothesis
assumptions for the Artin-type density assertions.
No unconditional density assertion is made here.

To transfer such a density to the vanishing set,
it suffices to know that
\[
\#(\mathcal P_\ell\cap\mathcal W_\ell\cap[1,x])
=o(x/\log x).
\]
Writing
\[
\mathcal W_\ell(x)
=\#\{p\leq x:p\in\mathcal W_\ell\},
\]
the stronger assertion
$\mathcal W_\ell(x)=o(x/\log x)$ would suffice
as well.
It is not supplied by the near-primitive-root results;
see \cite[Section 3.4]{MS23}.

Returning to a fixed $p$ and a general nonempty
$\Sigma$, one has
\[
\rank_{\Z_p}C_\Sigma
=\sum_{\ell\in\Sigma}d_\ell-1
\geq|\Sigma|-1.
\]
Thus two distinct auxiliary primes always give a
nonzero negative local correction.
If every $a_\ell=b_\ell=1$, then
\[
C_\Sigma\simeq
\bigl(\Lambda/(L(T))\bigr)^{|\Sigma|-1}.
\]
The subtraction of one reflects the single diagonal
global Tate module.
Through exact control, this correction is realized at
every finite layer; through regularized measures, it
is realized by the analytic factors of
Theorem~\ref{thm:measure-main}.


\begin{thebibliography}{99}

\bibitem{DK23}
S.~Dasgupta and M.~Kakde,
On the Brumer--Stark conjecture,
\emph{Ann. of Math. (2)} \textbf{197} (2023),
289--388.

\bibitem{FW79}
B.~Ferrero and L.~C.~Washington,
The Iwasawa invariant $\mu_p$ vanishes for
abelian number fields,
\emph{Ann. of Math.} \textbf{109} (1979),
377--395.

\bibitem{Finat19}
E.~Finat,
On the growth of the $(S,\{2\})$-refined
class number,
\emph{J. Number Theory} \textbf{200} (2019),
154--160.

\bibitem{GP15}
C.~Greither and C.~D.~Popescu,
An equivariant main conjecture in Iwasawa
theory and applications,
\emph{J. Algebraic Geom.} \textbf{24} (2015),
629--692.

\bibitem{Gross88}
B.~H.~Gross,
On the values of abelian $L$-functions
at $s=0$,
\emph{J. Fac. Sci. Univ. Tokyo Sect. IA Math.}
\textbf{35} (1988), 177--197.

\bibitem{HK16}
S.~Hu and M.-S.~Kim,
The $(S,\{2\})$-Iwasawa theory,
\emph{J. Number Theory} \textbf{158} (2016),
73--89.

\bibitem{HKMS19}
S.~Hu, M.-S.~Kim, P.~Moree, and M.~Sha,
Irregular primes with respect to Genocchi
numbers and Artin's primitive root conjecture,
\emph{J. Number Theory} \textbf{205} (2019),
59--80.

\bibitem{KH12}
M.-S.~Kim and S.~Hu,
On $p$-adic Hurwitz-type Euler zeta functions,
\emph{J. Number Theory} \textbf{132} (2012),
2977--3015.

\bibitem{KW21}
H.~Knospe and L.~C.~Washington,
Dirichlet series expansions of
$p$-adic $L$-functions,
\emph{Abh. Math. Semin. Univ. Hambg.}
\textbf{91} (2021), 325--334.

\bibitem{KKMS23}
A.~Kumar, M.~Kumari, P.~Moree, and S.~K.~Singh,
Ramanujan-style congruences for prime level,
\emph{Math. Z.} \textbf{303} (2023),
Paper No.~19, 22 pp.

\bibitem{NT3}
N.~Kurokawa, M.~Kurihara, and T.~Saito,
\emph{Number Theory 3:
Iwasawa Theory and Modular Forms},
translated from the Japanese by M.~Kuwata,
Translations of Mathematical Monographs,
vol.~242,
American Mathematical Society,
Providence, RI, 2012.

\bibitem{Kur21}
M.~Kurihara,
Notes on the dual of the ideal class groups
of CM-fields,
\emph{J. Th\'eor. Nombres Bordeaux}
\textbf{33} (2021), 971--996.

\bibitem{MW84}
B.~Mazur and A.~Wiles,
Class fields of abelian extensions of $\Q$,
\emph{Invent. Math.} \textbf{76} (1984),
179--330.

\bibitem{MS23}
P.~Moree and P.~Sgobba,
Prime divisors of $\ell$-Genocchi numbers
and the ubiquity of Ramanujan-style
congruences of level $\ell$,
\emph{J. Number Theory} \textbf{251} (2023),
147--184.

\bibitem{Ribet76}
K.~A.~Ribet,
A modular construction of unramified
$p$-extensions of $\Q(\mu_p)$,
\emph{Invent. Math.} \textbf{34} (1976),
151--162.

\bibitem{Washington97}
L.~C.~Washington,
\emph{Introduction to Cyclotomic Fields},
second edition,
Graduate Texts in Mathematics, vol.~83,
Springer-Verlag, New York, 1997.

\bibitem{Weibel94}
C.~A.~Weibel,
\emph{An Introduction to Homological Algebra},
Cambridge Studies in Advanced Mathematics,
vol.~38,
Cambridge University Press, Cambridge, 1994.

\end{thebibliography}
\end{document}